\definecolor{aleacolour}{rgb}{0.09,0.32,0.44} 
\ifdraft{\doublespace}{\singlespace} 
\theoremstyle{plain}
\newtheorem{thm}{Theorem}[section]
\newtheorem{prop}[thm]{Proposition}
\newtheorem*{peel}{Peeling Process}
\newtheorem*{prop*}{Proposition}
\newtheorem{lem}[thm]{Lemma}
\newtheorem*{lem*}{Lemma}
\newtheorem{cor}[thm]{Corollary}
\newtheorem{claim}[thm]{Claim}
\newtheorem{fact}[thm]{Fact}
\newtheorem{conj}[thm]{Conjecture}
\newtheorem{ques}[thm]{Question}
\newtheorem*{rem}{Remark}
\theoremstyle{definition}
\newtheorem{definition}[thm]{Definition}
\newcommand{\BD}{\Bar{\Delta}}
\newcommand{\beq}{\begin{equation}}
\newcommand{\eeq}{\end{equation}}
\date{}
\title{\vspace{-0.7cm} {Extremal number of cliques of given orders in graphs with a forbidden clique minor }}
\author{
Ruilin Shi\thanks{School of Mathematics, Georgia Institute of Technology, Atlanta, GA 30318, USA. 
 Email: \href{mailto:rshi49@gatech.edu} {\nolinkurl{rshi49@gatech.edu}}.}
\and 
Fan Wei\thanks{Department of Mathematics, Duke University, Durham, NC 27710, USA. NSF grants DMS-2404167 and DMS-2401414. Email: \href{mailto:fw97@math.duke.edu}
{\nolinkurl{fw97@math.duke.edu}}.}
}
\begin{document}

\maketitle

\begin{abstract}
Alon and  Shikhelman initiated the systematic study of a generalization of the extremal function. Motivated by algorithmic applications, the study of the extremal function $\text{ex}(n, K_k, K_t\text{-minor})$, i.e., the number of cliques of order $k$ in $K_t$-minor free graphs on $n$ vertices, has received much attention. In this paper, we determine essentially sharp bounds on the maximum possible number of cliques of order $k$ in a $K_t$-minor free graph on $n$ vertices. More precisely, we determine a function $C(k,t)$ such that for each $k < t$ with $t-k\gg \log_2 t$, every $K_t$-minor free graph on $n$ vertices has at most $ n C(k, t)^{1+o_t(1)}$ cliques of order $k$. We also show this bound is sharp by constructing  $K_t$-minor-free graph on $n$ vertices with $C(k, t) n$ cliques of order $k$. This bound answers a question of Wood \cite{Wo1} and Fox-Wei \cite{FW} asymptotically up to $o_t(1)$ in the exponent except the extreme values when $k$ is very close to $t$.

\end{abstract}

\section{Introduction}
A clique is a set of vertices where there are edges between any two vertices. We use $K_t$ to denote a clique on $t$ vertices, i.e., of order $t$. We also call it a $t$-clique. 

A cornerstone result in extremal combinatorics is Tur\'an's theorem \cite{turan}, which asks the maximum number of edges in a graph on $n$ vertices that do not have $K_t$ as a subgraph. The answer is obtained by the Tur\'an graph $T(n, t-1)$, which is the complete multipartite graph where each part has order $\lfloor n/(t-1) \rfloor$ or $\lceil n/(t-1) \rceil$. A natural question to ask is: for each positive integer $k < t$, what is the maximum number of cliques of order $k$ in a graph on $n$ vertices without $K_t$ as a subgraph? This is answered by Zykov \cite{zykov}; the same Tur\'an graph $T(n, t-1)$ also maximizes the number of $k$-cliques, i.e., cliques of order $k$. 

Alon and  Shikhelman  \cite{NS} initiated the systematic study of a generalization of this question. Let ${\text{ex}}(n, T, H)$ be  
the maximum possible number of copies of $T$ in an $H$-free graph on $n$ vertices. Thus Tur\'an's theorem gives an answer to ${\text{ex}}(n, K_2, K_t)$ and Zykov's theorem provides an answer to ${\text{ex}}(n, K_k, K_t)$ and furthermore ${\text{ex}}(n, \text{clique}, K_t)$.  Some other examples of results in this trend can be found in \cite{Erdos, Boll, BG, Hatami, NS}.

Analogous questions for forbidding minors have also been studied for a long time, where minors can be considered as a generalization of subgraphs. A graph $H$ is
a \emph{minor} of a graph $G$ if it can be obtained from $G$ by contracting edges and deleting
vertices and edges. A natural generalization asks: what is the maximum possible number of cliques (of possibly fixed sizes) a graph on $n$ vertices could have?  

The study of bounding the number of cliques in $K_t$-minor free graphs, i.e., understanding the extremal functions ${\text{ex}}(n, \text{clique}, K_t\text{-minor})$ and ${\text{ex}}(n, K_k, K_t\text{-minor})$, 
also have applications in theoretical computer science such as designing linear-time algorithms (e.g., see \cite{RW, DKT} and the references therein). 
The bounds on the function ${\text{ex}}(n, \text{clique}, K_t\text{-minor})$  have been studied through works such as by Norine, Seymour, Thomas, and Wollan \cite{NSTW}, Reed and Wood \cite{RW}, Fomin, Oum,  and Thilikos \cite{FOT}, Lee and Oum \cite{LO}, and Wood \cite{Wo1}. 

The paper \cite{NSTW} showed a classical result that the number of $n$-vertex graphs in a proper minor-closed family $\mathcal{I}_n$ is most $c^nn!$ for some constant $c$. 
The proof is through induction by showing that by deleting a twin vertex or by contracting two adjacent vertices with small degrees, there is a mapping from $\mathcal{I}_n$ to $\mathcal{I}_{n-1}$ where the size of pre-image is small. To show this, one key step is to upper bound the number of cliques in $K_t$-minor free graphs.   
The bound on the number of cliques in $K_t$-minor free graphs is later improved to $2^{ct\sqrt{\log t}}n$ by Reed and Wood \cite{RW} by showing that the number of $k$-cliques in $d$-degenarated graph is at most $d^{k-1}n$.

Fomin, Oum, and Thilikos \cite{FOT}  showed more applications of counting cliques in $K_t$-minor free graphs. They 
bounded the tree-width and clique-width of $G$ by the rank-width of $G$ and the number of cliques in $G$, and 
showed that numbers of many important structures are highly related to the number of cliques such as the number of hyperedges in a hypergraph and the number of distinct columns in a binary matrix.   
Notice that they improved the bound of the number of cliques to $2^{ct\log\log t}n$ by bounding the number of $k$-cliques for each $k\le t-1$.

Lee and Oum \cite{LO} considered the number of cliques in $K_t$-subdivision free graphs, and improved the bound to $2^{5t+o(t)}$. 
Wood \cite{Wo1} counted the exact numbers of cliques in the $K_t$-minor free graphs for every $3\le t \le 9$. More precisely, he counted numbers of $k$-cliques in the $K_t$-minor free graphs for every $3\le k< t\le 9$  and gave an upper bound for ${\text{ex}}(n, K_k, K_t)$. He also made several conjectures about this bound which inspired this paper.

The question about the {\it total} number of cliques in $K_t$-minor free graphs was answered by Fox and Wei \cite{FW} where the asymptotically sharp bound is obtained.  
\begin{thm}[Theorem 1.1 \cite{FW} 2016]\label{FWthm}
Every graph on $n$ vertices with no $K_t$-minor has at most $3^{2t/3+o(t)}
n$ cliques.
This bound is tight for $n \geq 4t/3.$
\end{thm}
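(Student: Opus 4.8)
The proof has two halves: a construction giving the lower bound, and an argument for the upper bound. For the lower bound the plan is to take, for $n\ge 4t/3$, a disjoint union of copies of the \emph{cocktail party graph} $K_{p\times 2}$ --- the complete $p$-partite graph in which every part has size $2$ --- with $p=\lfloor 2(t-1)/3\rfloor$. The essential fact is that the Hadwiger number of $K_{p\times 2}$ is exactly $\lfloor 3p/2\rfloor$, so that $K_{p\times2}$ is $K_t$-minor-free. For the upper bound on it, note that in a complete multipartite graph a connected branch set is either a single vertex or meets two distinct parts, and two branch sets fail to be adjacent only if both are singletons inside one common part; hence a $K_\ell$-minor uses at most one singleton per part (at most $p$ in all) and every other branch set uses at least two vertices, so from the $2p$ available vertices $2\ell-p\le 2p$, i.e.\ $\ell\le\lfloor 3p/2\rfloor$; the matching minor is built from one singleton in each part together with a near-perfect matching on the remaining $p$ vertices. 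Since a clique meets each part in one of three ways, $K_{p\times2}$ has exactly $3^{p}$ cliques, so the disjoint union has $\tfrac{n}{2p}\,3^{p}=3^{2t/3-o(t)}\,n$ cliques. That size-$2$ parts are the right choice is seen from a short optimization over complete multipartite graphs: a part of size $q$ costs $(q+1)/2$ in the Hadwiger budget while multiplying the clique count by $q+1$, and $\tfrac{\ln(q+1)}{q+1}$ is maximized over integers at $q=2$.

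For the upper bound, the first step is to reduce to graphs with few vertices. By the Kostochka--Thomason theorem a $K_t$-minor-free graph $G$ on $n$ vertices is $d$-degenerate with $d=O(t\sqrt{\log t})$, so in a degeneracy order $v_1,\dots,v_n$ --- writing $N^-(v)$ for the neighbours of $v$ that precede it --- one has the exact identity $\operatorname{cl}(G)=1+\sum_i\operatorname{cl}\bigl(G[N^-(v_i)]\bigr)$, where $\operatorname{cl}(\cdot)$ counts all cliques (the empty one included) and each $G[N^-(v_i)]$ is $K_{t-1}$-minor-free on at most $d$ vertices. It therefore suffices to prove that every $K_s$-minor-free graph $H$ on at most $O(s\sqrt{\log s})$ vertices has $\operatorname{cl}(H)\le 3^{2s/3+o(s)}$; the identity then immediately yields $\operatorname{cl}(G)\le n\cdot 3^{2t/3+o(t)}$.

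This bound on small $K_s$-minor-free graphs is the heart of the matter, and where I expect the main difficulty to lie. The naive recursion --- pick a minimum-degree vertex, split cliques according to whether they contain it, and recurse on its (now $K_{s-1}$-minor-free) neighbourhood and on the rest --- loses a $\operatorname{poly}(s)$ factor at each of $\Theta(s)$ levels and only recovers the Fomin--Oum--Thilikos bound $2^{O(s\log\log s)}$. To reach exponent $2s/3$ one must instead delete \emph{bounded, structured pieces} at the correct ``exchange rate'' of at most a $3^{2/3}$-fold increase in the clique count per unit decrease in the Hadwiger budget. A clean special case: if the complement $\overline H$ has two single-edge components, then four vertices of $H$ induce a $C_4$ that is complete to the rest of $H$, and because $h(A+B)\ge h(A)+h(B)$ with $h(C_4)=3$ and $\operatorname{cl}(C_4)=9$, deleting these four vertices leaves a $K_{s-3}$-minor-free graph while dividing the clique count by $9$ --- a rate of $9^{1/3}=3^{2/3}$ --- so iterating closes an induction. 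The obstacle is to carry this out in general: when $\overline H$ is connected, $H$ has no such ``universal'' piece; and even when $\overline H$ is disconnected, the naive bound $h(A+B)\ge h(A)+h(B)$ can undercount the drop in the Hadwiger number badly enough that some pieces look cheaper than $3^{2/3}$ and would blow up if deleted repeatedly. Handling this needs either a sharper estimate for that drop, or a stability statement forcing any $K_s$-minor-free graph with close to $3^{2s/3}$ cliques to contain a genuine cocktail-party-like substructure (the sparse alternative being harmless, since a sparse $K_s$-minor-free graph has only $O(|V(H)|)$ cliques). Throughout, the $o(t)$ slack in the exponent is precisely what absorbs the degeneracy factor from the reduction step and the bounded multiplicative errors from deleting slightly suboptimal pieces.
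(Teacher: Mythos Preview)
Your lower bound is correct and matches the paper's description.

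For the upper bound, your degeneracy reduction to graphs on $O(t\sqrt{\log t})$ vertices is fine --- it is essentially the first step of the actual argument. But the gap you flag is real, and the fix you propose (delete structured pieces at exchange rate $3^{2/3}$ per unit of Hadwiger budget) is not how the theorem is proved and, as you yourself observe, does not extend beyond the $C_4$ special case: a general $K_s$-minor-free graph need not contain any universal piece with the right rate, and no stability statement of the kind you sketch is available.

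The Fox--Wei proof (whose machinery this paper reproduces in Section~\ref{sec:container}) does something different: it \emph{continues peeling}. After restricting to $N(v_1)$, iterate --- delete minimum-degree vertices until some clique-vertex $v_2$ has minimum degree, pass to its neighbourhood, and so on --- stopping once the current graph $G_r$ has maximum \emph{missing} degree at most $\tfrac12\sqrt{|G_r|+r-t}$. Because each step shrinks the graph by at least this much, the process halts in $r\le r_0=O(\sqrt t\,(\log t)^{1/4})$ steps (Claim~\ref{claim:boundr0}), and the number of encodings $(v_1,\dots,v_r)$ is at most $\binom{O(t\sqrt{\log t})}{r_0}=3^{o(t)}$. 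The point of the stopping rule is that the terminal graph $G_r$ is now \emph{dense} (its complement has tiny maximum degree), and Lemma~\ref{lem:densehnumber} converts ``dense and $K_s$-minor-free'' into the \emph{linear} constraint $|V|+\omega\le 2s-1$. Under that constraint the total clique count is an elementary optimization: reduce to complete multipartite graphs via Zykov, write the clique count as $\prod(1+a_i)$ with $\sum(1+a_i)\le 2s-1$, and recall that a product of integers $\ge 2$ with fixed sum is maximized when every factor equals~$3$. This gives at most $3^{(2s-1)/3}$ cliques in $G_r$, and multiplying by the $3^{o(t)}$ encodings finishes.

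So the idea you are missing is: do not try to decrement the Hadwiger number piece by piece; instead, keep peeling until the complement has bounded maximum degree, then use Lemma~\ref{lem:densehnumber} to replace the minor constraint by $n+\omega\le 2s-1$, under which the extremal problem becomes the classical ``maximize $\prod b_i$ with $\sum b_i$ fixed''.
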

Note the bound above is adding up the number of cliques of {\it all possible} sizes. 
This bound is asymptotically sharp for $n \geq 4t/3$ by considering a disjoint union of copies of the graph which is the complement of a perfect
matching on $2 \lceil 2t/3 \rceil -2$ vertices. 
Counting the number of cliques was also studied in other graph families that can be found in \cite{KW1, FWimmersion, GHMTZ}.

When we fix the clique size $k$, counting the number of $k$-cliques instead of the total number of cliques in graphs on $n$ vertices with no $K_t$-minor, i.e., to understand ${\text{ex}}(n, K_k, K_t\text{-minor})$,  has received much attention. 
 Clearly, when $n < t$,  the maximum number of cliques of order $k$ is at most $\binom{n}{k}$; this bound is exact and sharp by considering a clique on $n$ vertices, which has no $K_t$-minor. When $k > t$, clearly the answer is 0. The question is less clear for other values of $k$. This thread dates back to the works of Dirac \cite{Dir2}, Mader \cite{Ma1}, J{\o}rgensen \cite{Jor}, and Song and Thomas \cite{ST} for the cases when $k=2$ and $t \leq 9$.  

For general $t$ and any $k < t$, Wood \cite{Wo1} asked the following question, which was asked again by Fox and Wei \cite{FW}.
\begin{ques}[Wood \cite{Wo1}, Fox and Wei \cite{FW}]\label{ques}
    Let $t, k$ be positive integers such that $k < t$. What is the maximum possible number of cliques of order $k$ in a $K_t$-minor free graph on $n$ vertices?
\end{ques}

For small values of $t$, Wood \cite{Wo1} determined the exact value of ${\text{ex}}(n, K_k, K_t\text{-minor})$ for $t \leq 9$ and $k < t$. On the other hand, for larger values of $t$ but for $k=2$, 
 the asymptotic sharp (in $t$) answer is now known after a series of works by Mader, Kostochka, and Thomason  \cite{Ma,Ma1, Ko, Ko1, Th, Th1}. In particular, 

 Kostochka \cite{Ko,Ko1} and Thomason \cite{Th} independently proved that the maximum number of edges in graphs on $n$ vertices and with no $K_t$-minor is $\Theta(t\sqrt{\log_2 t})n$. 
Thomason \cite{Th1} later determines the constant $(\alpha+o_t(1))t\sqrt{\ln t}\cdot n$ where $\alpha=0.319...$ is an explicit constant. This asymptotic extremal configuration can be achieved by random graph $G(n', p')$ with appropriate values of $n'$ and $ p'$.

For larger values of $k$, it seems pseudorandom graphs are no longer optimal. As observed by Fox and Wei \cite{FW},
the average order of the
cliques in the complement of a perfect matching of  $x$ edges is $2x/3$, and thus a typical random clique
in this graph has about this size. Now consider the graph which is a complement of a perfect matching of just less than $2t/3$ edges and is thus $K_t$-free. It has
nearly the maximum number of $k$-cliques for $k = 4t/9$, which gives the $4t/9$-clique count $3^{2t/3 - o(t)} n$. A complement of a perfect matching can be considered as an example of a \emph{Tur\'an graph} that each part has size 2. In general, a candidate for lower bound construction is based on Tur\'an graphs. 

Let $T(n, \omega)$ be the  \emph{Tur\'an graph}, the complete balanced multipartite graph on $n$ vertices and with $\omega$ parts, where each part has order $\lfloor n/\omega \rfloor$ or $\lceil n/\omega \rceil$. Are disjoint unions of Tur\'an graphs nearly optimal? When $k = t-1$,
 Wood \cite{Wo1} shows that the maximum number of $K_{t-1}$ in a $K_t$-minor free graph is exactly $n -t +2$. The construction is called an $(t-2)$-tree (Definition \ref{$(t-2)$-tree}), which is essentially similar to a disjoint union of copies of $K_{t-1}$ where the different copies of $K_{t-1}$ are glued along the same $K_{t-2}$.

 The discussion above shows that
depending on the range of $k$, the extremal constructions for the exact maximum number of $k$-cliques may have quite different forms. We are interested in the asymptotically sharp bounds for the number of $k$-cliques in graphs on $n$ vertices and without $K_t$-minor, where the asymptotic is up to $o(1)$ in the exponent, similar to what asymptotic means as in Theorem \ref{FWthm} \cite{FW}.

Some general upper bounds for this quantity are known. The following simple upper bound is well-known, for example
by Wood \cite{Wo1} Lemma 18, the proof of Norine et al. \cite{NSTW},  the proof of Lemma 3.1 in Reed and Wood \cite{RW}; the proof of Lemma 5 in Fomin et al. \cite{FOT}, or a simplified proof of Theorem 1.1 in Fox and Wei \cite{FW}. 
\begin{thm}[\cite{Wo1, NSTW, RW, FOT, FW}]\label{thm:smallkold}\label{thm:crudeub}
  When $t$ is sufficiently large, for any $k <t$, every graph on $n$ vertices with no $K_t$-minor has at most 
$  \binom{\beta t\sqrt{\ln t}}{k-1} n,$
 cliques of order $k$. The constant $\beta=0.64$. Notice that $\beta > 2\alpha$ where constant $\alpha = 0.319...$ is determined by Thomason \cite{Th1}. 
\end{thm}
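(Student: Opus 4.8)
The plan is to combine two classical ingredients: Thomason's sharp bound on the number of edges in a $K_t$-minor free graph, and the standard degeneracy argument for counting cliques (as used by Reed and Wood \cite{RW}). First I would recall that by Thomason \cite{Th1}, every graph on $m$ vertices with no $K_t$-minor has at most $(\alpha+o_t(1))\,t\sqrt{\ln t}\cdot m$ edges, where $\alpha=0.319\ldots$ and the error term depends only on $t$. Since being $K_t$-minor free is inherited by subgraphs, every subgraph $H$ of a $K_t$-minor free graph $G$ satisfies $|E(H)|\le (\alpha+o_t(1))\,t\sqrt{\ln t}\cdot|V(H)|$, hence $H$ has a vertex of degree at most $2(\alpha+o_t(1))\,t\sqrt{\ln t}$. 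Thus $G$ is $d$-degenerate for $d=\lfloor 2(\alpha+o_t(1))\,t\sqrt{\ln t}\rfloor$; and since $\beta=0.64>2\alpha$, for all sufficiently large $t$ the error term is swallowed and we obtain $d\le \beta t\sqrt{\ln t}$.

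Next I would use the degeneracy ordering of $G$: list the vertices as $v_1,\dots,v_n$ so that each $v_i$ has at most $d$ neighbors among $v_{i+1},\dots,v_n$. Every $k$-clique of $G$ has a unique vertex $v_i$ of smallest index, and the remaining $k-1$ vertices are all forward neighbors of $v_i$; since $v_i$ has at most $d$ forward neighbors, at most $\binom{d}{k-1}$ of the $k$-cliques have $v_i$ as their smallest-index vertex. Summing over $i$, the graph $G$ has at most $n\binom{d}{k-1}$ cliques of order $k$. Finally, since $d\le\beta t\sqrt{\ln t}$ and $k-1<t<\beta t\sqrt{\ln t}$ for $t$ large, monotonicity of $\binom{x}{k-1}$ in $x$ for $x\ge k-1$ (together with the trivial bound $\binom{d}{k-1}=0$ when $k-1>d$) yields $n\binom{d}{k-1}\le \binom{\beta t\sqrt{\ln t}}{k-1}n$, which is the desired estimate.

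The argument is short, so the only point requiring care is the numerical comparison: one genuinely needs Thomason's precise constant $\alpha=0.319\ldots$, not merely the order of magnitude $\Theta(t\sqrt{\log t})$ of Kostochka and Thomason, because the estimate rests on the tight inequality $2\alpha<0.64=\beta$, and the hypothesis that $t$ is sufficiently large is precisely what absorbs the $o_t(1)$ correction. I would also stress that this bound is crude in two ways that the rest of the paper will exploit: the factor-of-$2$ loss in passing from an average-degree bound to a degeneracy bound, and the fact that the count $\binom{d}{k-1}$ ignores all structure among the forward neighbors of a vertex.
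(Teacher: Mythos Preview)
Your proposal is correct and is exactly the standard degeneracy argument that the paper attributes to the cited references \cite{Wo1, NSTW, RW, FOT, FW}; the paper itself does not supply a separate proof, treating the statement as well known. Your observation that the inequality $\beta=0.64>2\alpha$ is the only place where Thomason's precise constant is needed is accurate and matches the paper's remark following the theorem.
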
 
This bound is sharp for $k = 2$ up to a multiplicative constant by the aforementioned result of Thomason \cite{Th1} and by considering a disjoint union of random graphs of appropriate sizes.

Besides this upper bound, Wood \cite{Wo1} made an explicit conjecture on the maximum number of $k$-cliques in $K_t$-minor free graphs on $n$ vertices for large $k$. 
 \begin{conj}[Wood \cite{Wo1} Conjecture 20]  \label{conj:wood2}
For some $\lambda \in [1/3, 1)$,  for all integers $t>3$ and $k>\lambda t$ and $n>t -1$,the number of $k$-cliques in a $K_t$-minor free graph on $n$ vertices is at most 
\[ {t-2 \choose k}+(n-t+2){t-2 \choose k-1}=\binom{t-2}{k-1} \left( n - \frac{(k-1)(t-1)}{k} \right). \] 
\end{conj}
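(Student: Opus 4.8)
The plan is to prove Conjecture~\ref{conj:wood2} by induction on $n$, aiming for the exact bound $g_t(n,k):=\binom{t-2}{k}+(n-t+2)\binom{t-2}{k-1}=\binom{t-2}{k-1}\bigl(n-\tfrac{(k-1)(t-1)}{k}\bigr)$. The base case $n=t-1$ is immediate: a $K_t$-minor free graph on $t-1$ vertices is a subgraph of $K_{t-1}$ and has at most $\binom{t-1}{k}=g_t(t-1,k)$ cliques of order $k$. Two facts drive the induction. First, the local observation: for every $v\in V(G)$ the graph $G[N(v)]$ is $K_{t-1}$-minor free (a $K_{t-1}$-minor inside $N(v)$ together with $v$ would be a $K_t$-minor of $G$), and the number of $k$-cliques of $G$ through $v$ equals the number of $(k-1)$-cliques of $G[N(v)]$. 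Second, the telescoping identity $g_t(n,k)-g_t(n-1,k)=\binom{t-2}{k-1}$, which records exactly how much a vertex deletion is allowed to cost; the bound is tight because in the $(t-2)$-tree construction ($K_{t-2}$ joined to an independent set of $n-t+2$ vertices) every vertex of the independent part lies in precisely $\binom{t-2}{k-1}$ cliques of order $k$.

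The easy regime is when some vertex $v$ lies in at most $\binom{t-2}{k-1}$ cliques of order $k$: then $G-v$ is $K_t$-minor free on $n-1\ge t-1$ vertices, the induction hypothesis bounds its $k$-cliques by $g_t(n-1,k)$, and adding back the cliques through $v$ keeps us within $g_t(n,k)$. This covers in particular every vertex of degree at most $t-2$ (for which $G[N(v)]$ has at most $t-2$ vertices, hence at most $\binom{t-2}{k-1}$ cliques of order $k-1$), and every vertex in no $k$-clique at all. So the entire difficulty is the case in which \emph{every} vertex of $G$ lies in strictly more than $\binom{t-2}{k-1}$ cliques of order $k$ --- which in particular forces minimum degree at least $t-1$.

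This dense case is the main obstacle, and it is genuinely nontrivial: the complement of a perfect matching on $2m$ vertices is $K_{2m-1}$-minor free, has minimum degree $2m-2$ and clique number $m=(t+1)/2$ when $t=2m-1$, so $K_t$-minor free graphs with both high minimum degree and large cliques really do exist and must be handled. My plan here is a two-part attack. (i) A structural reduction: show that a $K_t$-minor free graph in which every vertex lies in many cliques of order $k$ decomposes along clique separators of size $<t-1$ into pieces each of which is, up to bounded perturbation, of complement-of-matching type, so that its non-neighbourhood (``anti-edge'') structure is sparse and its number of cliques of order $k$ is correspondingly small. Whenever a separator $S$ with $|S|<t-1$ is present, one runs the same additive induction as in the easy case (each piece contributes its own $k$-cliques, with those inside the separator clique $S$ not double-counted). (ii) For an indecomposable piece $P$ on $p$ vertices, prove directly that it has at most $\binom{t-2}{k-1}\,p$ cliques of order $k$ once $k>\lambda t$; the heart of this is an entropy/convexity estimate of the shape $\binom{m}{k}2^{k}\le\binom{2m-2}{k}$ together with its robust versions, which holds in exactly the stated range of $k$ and fails for small $k$. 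This last point is where the threshold $\lambda$ enters --- the comparison becomes tight as $\lambda\to 0$ --- and it is why pushing $\lambda$ down to an absolute constant such as $1/3$, or obtaining the exact leading constant rather than its $1+o_t(1)$-exponent relaxation (the route of the main theorem of this paper), is delicate.

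A word on scope. When $k$ is very close to $t$ the term $\binom{t-2}{k}$ is negligible against $(n-t+2)\binom{t-2}{k-1}$ and the dense case essentially cannot arise (a $K_t$-minor free graph with many cliques of order close to $t$ is forced to be tree-like), so the induction above goes through cleanly; indeed Wood~\cite{Wo1} settled $k=t-1$ by essentially this argument. It is the intermediate range $k=\Theta(t)$ where step~(i) carries all the weight and where I expect the real difficulty to lie.
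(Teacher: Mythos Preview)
Your proposal attempts to \emph{prove} the conjecture, but the paper's treatment of this statement is a \emph{disproof} for $\lambda<0.553$ (Appendix~B, Theorem~\ref{thm:woodconj2false}), together with an asymptotic confirmation only for $\lambda>2/3$ (Corollary~\ref{cor: very large k}). So the very statement you are trying to establish is false in the lower part of the range $[1/3,1)$, and your plan cannot succeed there.

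The failure is concentrated exactly in your step~(ii). You correctly single out the complement of a perfect matching as the dangerous indecomposable piece, and propose to dispatch it with an inequality of the type $\binom{m}{k}2^k\le \binom{t-2}{k-1}\cdot p$. But the paper takes precisely this example with $m=2(t-1)/3$ matching edges (so $p=4(t-1)/3$ vertices, $K_t$-minor free by Lemma~\ref{lem:densehnumber}) and computes that
\[
\binom{2(t-1)/3}{k}2^{k}\cdot\frac{1}{4(t-1)/3}\;>\;\binom{t-2}{k-1}
\]
whenever $k=\lambda t$ with $\lambda\le 0.553$ and $t$ large. In other words, the ``entropy/convexity estimate'' you invoke is not merely delicate near $\lambda=1/3$ --- it is outright false for all $\lambda<0.553$, and disjoint copies of this piece already beat the conjectured bound. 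Your remark that the comparison ``becomes tight as $\lambda\to 0$'' understates the situation: it is strictly reversed on a nontrivial interval.

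A secondary issue: your structural step~(i), that every $K_t$-minor free graph in which each vertex lies in many $K_k$'s decomposes along small clique separators into complement-of-matching-like pieces, is asserted but not argued, and is far from obvious. Even if it were true, the breakdown of~(ii) shows the induction cannot close below $\lambda\approx 0.553$. If you restrict to $\lambda>2/3$ your outline becomes plausible, but then it overlaps with what the paper already proves (only asymptotically) via a completely different route --- the peeling/container analysis of Section~\ref{sec:container} combined with Lemma~\ref{lem:largekdense} --- rather than via clique-separator decomposition.
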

Again, the upper bound is achieved by the $(t-2)$-trees defined below. 
We will prove an asymptotic version of this conjecture for $\lambda > 2/3$ in Corollary \ref{cor: very large k}, and show that the claim of this conjecture does not hold for $\lambda < 0.553$.

 \begin{definition}
[$(t-2)$-tree] \label{$(t-2)$-tree} 
   An \emph{$(t-2)$-tree} is a family of graphs defined recursively as follows: We start with the complete graph $K_{t-2}$,  which is also an $(t-2)$-tree. For any $(t-2)$-tree $H$, if $C$ is a clique of order $t-2$ in $H$, then by adding another vertex to $H$ that is adjacent only to the vertices in $C$ is also an $(t-2)$-tree. Then the number of cliques of order $k$ in every graph in the $(t-2)$-tree family is ${t-2 \choose k}+(n-t+2){t-2 \choose k-1}$.      
 \end{definition} 

 For general values of $k< t$ such that $t-k\gg \log_2 t$, we prove asymptotically sharp bounds on the maximum possible number of $k$-cliques in $K_t$-minor free graphs on $n$ vertices in Theorem \ref{thm: summary}. Again asymptotic here means up to $o_t(1)$ in the exponent, similar to Theorem \ref{FWthm} \cite{FW}.

The main results of this paper are summarized in the next subsection. 
\subsection{Our Results}

In the following main theorem, we answer Question \ref{ques} (Wood \cite{Wo1}, Fox and Wei \cite{FW}) 
up to $o_t(1)$ in the exponent, similar to what asymptotically sharp means as in Theorem \ref{FWthm} \cite{FW}. In other words, we prove a sharp upper bound for the maximum number of cliques of size $k$ in $K_t$-minor free graphs up to $o_t(1)$ in the exponent. 

\begin{definition}
    For fixed $k<t$, let $T^*_t(k)$ be the Tur\'an graph  $T(2t-\omega-1,\omega)$ maximizing the number of cliques of order $k$ 
among all $\omega$ such that $k\le \omega \le t-1$. Let $C^*_t(k)$ denoted the number of cliques of order $k$ in $T^*_t(k)$.
\end{definition}

 We will show that $T^*_t(k)$ is $K_t$-minor free for every $t$ and $k<t$ in {Lemma \ref{lem: kt minor free}}. Our main result is the following theorem.

\begin{thm} \label{thm: summary}
 Assume $t-k \gg \log_2 t$. The number of cliques of order $k$ in a $K_t$-minor free graph on $n$ vertices is at most 
  
    \[
     n\cdot \left(\frac{C^*_t(k)}{|V(T^*_t(k))|}\right)^{1+o_t(1)}
    \] 
This bound is sharp up to $o_t(1)$ in the exponent when $n \geq 2t$.
   \end{thm}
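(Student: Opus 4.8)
The plan is to split Theorem~\ref{thm: summary} into its two halves: the lower bound (easy) and the upper bound (the real content). For the lower bound, by Lemma~\ref{lem: kt minor free} the Tur\'an graph $T^*_t(k)$ is $K_t$-minor free; for $n \geq 2t > |V(T^*_t(k))|$ we take a disjoint union of $\RD{n/|V(T^*_t(k))|}$ copies of $T^*_t(k)$ (and pad with isolated vertices, which create no new $k$-cliques), which is still $K_t$-minor free since each component is, and which has at least $(1-o_t(1)) n \cdot C^*_t(k)/|V(T^*_t(k))|$ cliques of order $k$. This already matches the claimed bound with the $o_t(1)$ absorbed into the exponent, so I would dispatch this paragraph quickly. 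To make the value $C^*_t(k)/|V(T^*_t(k))|$ transparent I would also record a closed-form estimate: for the optimal $\omega$, $T(2t-\omega-1,\omega)$ has parts of size roughly $(2t-\omega-1)/\omega$, so the number of $k$-cliques per vertex behaves like $\binom{\omega}{k}((2t-\omega)/\omega)^{k}/(2t-\omega)$ up to lower-order factors, and one optimizes over $k \le \omega \le t-1$; this is the quantity $C(k,t)$ from the abstract.

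For the upper bound I would proceed by the standard degeneracy/contraction strategy used in the references cited for Theorem~\ref{thm:crudeub}, but refined so that the constant is governed by Tur\'an graphs rather than by the crude $\binom{\beta t\sqrt{\ln t}}{k-1}$. The key structural input is that a $K_t$-minor free graph on $n$ vertices has a vertex of degree at most $O(t\sqrt{\ln t})$ (Kostochka--Thomason), so it is $d$-degenerate with $d = O(t\sqrt{\ln t})$; removing a minimum-degree vertex $v$ destroys only the $k$-cliques through $v$, which live inside $N(v)$, a set of at most $d$ vertices inducing a $K_{t-1}$-minor-free graph (else $v$ plus the minor gives a $K_t$-minor). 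So one sets up the recursion $f(n,t,k) \le f(n-1,t,k) + (\text{max number of }(k-1)\text{-cliques in a }K_{t-1}\text{-minor free graph on }\le d\text{ vertices})$, where the ambient number of vertices is bounded by $d=O(t\sqrt{\log_2 t})$. Iterating gives a bound of the form $n \cdot g(t,k)$ where $g(t,k)$ is, up to a factor polynomial in $d$ (hence $t^{O(1)}$, absorbed into the $o_t(1)$ exponent), the maximum number of $(k-1)$-cliques in a $K_{t-1}$-minor free graph on at most $O(t\sqrt{\log_2 t})$ vertices.

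This reduces everything to an \emph{extremal} problem on a \emph{bounded} number of vertices: how many $(k-1)$-cliques can a $K_{t-1}$-minor free graph on $m = O(t\sqrt{\log_2 t})$ vertices have? Here I would argue that, among graphs on a given small vertex set with no $K_{t-1}$ minor, the clique-count is (up to the $o_t(1)$ slack) maximized by a blow-up of a complete multipartite pattern — i.e.\ by a Tur\'an-type graph — because a $K_t$-minor-free graph on few vertices is dense only if it is close to a union of complete multipartite pieces glued along small cuts, and the number of $k$-cliques in such a gluing is essentially additive over the pieces. One then optimizes over the number of parts and the part sizes, subject to the vertex budget $\le 2t-\omega-1$ forced by $K_t$-minor-freeness (a complete multipartite graph $T(m,\omega)$ with $m \ge 2t-\omega$ has a $K_t$ minor, by pairing up vertices within parts to contract), which is exactly the constraint defining $T^*_t(k)$. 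Combining, the per-vertex clique rate is at most $C^*_t(k)/|V(T^*_t(k))|$ to the power $1+o_t(1)$.

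The hard part will be the last step: proving rigorously that on a bounded vertex set the extremal $K_{t-1}$-minor-free graph for counting $(k-1)$-cliques is within a $t^{o(1)}$-in-the-exponent factor of a Tur\'an graph. A naive application of Theorem~\ref{thm:crudeub} only gives $\binom{\beta t\sqrt{\ln t}}{k-1}$, which is far larger than $C^*_t(k)$ when $k$ is bounded away from $t$, so one genuinely needs a structural decomposition — likely a clique-sum / small-separator argument (in the spirit of the proof of Theorem~\ref{FWthm}, and of Norine--Seymour--Thomas--Wollan) showing that a graph with no $K_{t-1}$ minor and with many $(k-1)$-cliques can be cut along separators of size $o(t)$ into pieces each of which is essentially complete multipartite, with the clique-count super-multiplicative across the decomposition. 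Managing the book-keeping so that the losses at the separators and from the polynomial-in-$t$ factors all vanish in the $o_t(1)$ exponent, and checking that the optimization over $\omega$ really does land on $T^*_t(k)$ rather than some other pattern, is where the bulk of the work lies; the requirement $t-k \gg \log_2 t$ should enter precisely to guarantee that the Tur\'an optimum dominates the degenerate "$(t-2)$-tree" regime and that these error terms are truly negligible.
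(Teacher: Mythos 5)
Your lower-bound paragraph is fine and matches the paper's construction (disjoint copies of $T^*_t(k)$, justified by Lemma~\ref{lem: kt minor free}). The upper-bound plan, however, has a genuine gap, and it is precisely where the paper's actual work lives.

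Your one-round degeneracy recursion is correct as far as it goes: removing a minimum-degree vertex $v$ and counting the $(k-1)$-cliques inside $G[N(v)]$, a $K_{t-1}$-minor-free graph on at most $d=O(t\sqrt{\ln t})$ vertices, does yield $f(n,t,k)\le n\cdot g(t,k)$ where $g(t,k)$ is the max number of $(k-1)$-cliques in a $K_{t-1}$-minor-free graph on $\le d$ vertices. But this is not a reduction to a tractable problem: it is the original problem again with $(t,k)$ replaced by $(t-1,k-1)$, and the vertex budget $d$ does not shrink if you try to iterate naively (the common neighborhood of two low-degree vertices can still have $\Theta(t\sqrt{\ln t})$ vertices). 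So after one step you are stuck needing exactly the bound you set out to prove. The paper does not stop after one step; it iterates into the full peeling process, with three carefully chosen stopping conditions (the $n_r\le t-r$ condition, the density condition $d_r\le \tfrac12(n_r+r-t)^{1/2}$, and $r=|V(K)|$), and the mathematical content is Key Lemma~\ref{lem:key1} (a long peeling run forces many pairwise-adjacent branch disks, hence a small clique minor in the terminal graph) together with Key Lemma~\ref{lem:very large counting} (a refined count of the encodings combined with a bound on cliques in the dense terminal graph). Your sketch has no analogue of either ingredient, and without them the crude bound of Theorem~\ref{thm:crudeub} is all you can extract, which, as you note yourself, is far too large when $k$ is not small.

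The step you flag as ``the hard part'' --- that a $K_{t-1}$-minor-free graph on $O(t\sqrt{\ln t})$ vertices has roughly Tur\'an-many $(k-1)$-cliques because it ``is close to a union of complete multipartite pieces glued along small cuts'' --- is not a true statement about $K_t$-minor-free graphs, and it is not what the paper proves. Thomason's extremal construction for $k=2$ is a union of \emph{pseudorandom} graphs $G(n',p')$, which are nowhere near complete multipartite; so no clique-sum decomposition into multipartite pieces can hold for all $K_t$-minor-free graphs on a bounded vertex set. What the paper actually shows (Proposition~\ref{prop:strucmiddle}, via Zykov and Lemma~\ref{lem:densehnumber}) is that \emph{among dense graphs in $\mathcal{G}_t$} the extremum is a Tur\'an graph; the whole point of the peeling process is to reduce to that dense family at the cost of a controllable encoding overhead, which is something your one-round argument never achieves. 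A smaller remark: your justification of the vertex budget ``$T(m,\omega)$ with $m\ge 2t-\omega$ has a $K_t$ minor, by pairing up vertices within parts to contract'' does not work, since vertices within one part of a complete multipartite graph are non-adjacent and cannot be contracted to a single branch vertex; the correct argument is the one in Lemma~\ref{lem: kt minor free} (or Lemma~\ref{lem:densehnumber}), which pairs vertices across parts.

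Finally, the paper's proof of Theorem~\ref{thm: summary} is also a case analysis (small $k$ handled by Theorem~\ref{thm:crudeub} directly; large $k\ge 5t/6$ by Corollary~\ref{cor: very large k}; middle range by Corollary~\ref{cor:main}); the range hypothesis $t-k\gg\log_2 t$ enters in Corollaries~\ref{cor: very large k} and~\ref{cor:main} to absorb the encoding overhead into the $o_t(1)$ exponent, not, as you suggest, to separate the Tur\'an regime from the $(t-2)$-tree regime.
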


 The matching lower bound construction is by considering $\lfloor n/|V(T^*_t(k))| \rfloor$ disjoint copies of the Tur\'an graph $T^*_t(k)$.

\begin{rem}
We have discussed that when $k=2$, pseudorandom graphs are asymptotically optimal \cite{Th1}. It turns out that when $k \ll \log \log t$, the random graph construction also matches the bound in Theorem \ref{thm: summary}, with a slightly better error bound $o_t(1)$ compared to the Tur\'an graph construction. 

\end{rem}

\begin{rem}
    When $k> 2t/3$, Lemma \ref{lem:largekdense} will show that $T^*_t(k)=K_t^-$, the complete graph $K_t$ delete a single edge, and thus $C^*_t(k) = \binom{t-1}{k} + \binom{t-2}{k-1}$.
\end{rem}

\begin{rem}
To see the quantitative behavior of $\frac{C^*_t(k)}{|V(T^*_t(k))|}$ for general values of $k$, first notice  $t \leq |T^*_t(k)| \leq 2t$ by the  definition of $T^*_t(k)$. 
    We will also show that $\binom{t-1}{k} \max\left(1, \left( 2-4\sqrt{ {2k}/{t}}  \right)\right)^k \leq  C^*_t(k) \leq \binom{t-1}{k} 2^k$ for $k \geq 25$ in Claim \ref{claim:simpleckbound} and Lemma \ref{lem:ckapprox}. In addition, $T^*_t(k)$ has $\omega$ parts where $\sqrt{tk}/4\le \omega \le 10 \sqrt{tk}$ as shown in Proposition \ref{prop:Tstarstructure}. 
\end{rem}

We also prove the asymptotic version of Wood's Conjecture \ref{conj:wood2} for every $k< t$ such that $t-k\gg \log_2 t$. In Theorem \ref{thm:woodconj2false} we show that the conjecture is false for $k \leq 0.553 t$.

Notice that the known upper bound in Theorem \ref{thm:smallkold} (Wood \cite{Wo1} and Fox-Wei \cite{FW}) is already an asymptotically sharp bound in the sense above when $k < t^{1-\delta}$ for some absolute constant $\delta$. (For more computational detail see the proof of Theorem \ref{thm: summary} in Section \ref{sec:kmiddle}.) However, not only have we improved this bound for $k$ in this range, but also showed that the new bounds are asymptotically sharp up to $o(1)$ in the exponent for all $k$ such that $t -k \gg O(\log t)$.

For general values of $k,t$, it is challenging to write down a closed-formula description of $T_t^*(k)$. Later Lemma \ref{lem:smalla} tells us that in $T_t^*(k)$, the order of each part is smaller than $\sqrt{\frac{4n-3k}{k}} + 1$; Thus when $k > 4n/7$, the complement of the optimal graph $T_t^*(k)$ is a perfect matching with possibly isolated vertices. It still remains open what the exact description of $T_t^*(k)$ is for general $k$.
The order of each part changes as a function of $k$. 
We could prove an asymptotic result on the size of each part.
\begin{prop}\label{prop:Tstarstructure}
For every $t>k\ge 1$, the optimal $T_t^*(k)$ is given by the Tur\'an graph $T(n,r)$ with $n + r = 2t-1$ where the number of part $r$ satisfies $\sqrt{tk}/4 \le r \le 10 \sqrt{tk}$. When $k \geq   2t/3$, the graph $T_t^*(k)$ is the Tur\'an graph $T(t, t-1)=K^-_{t}$. 
\end{prop}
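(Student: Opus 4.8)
\emph{Proof plan.}
The equality $n+r=2t-1$ is immediate, since by definition $T^*_t(k)=T(2t-\omega-1,\omega)$ for the maximising $\omega$. Writing $C_t(\omega)$ for the number of $k$-cliques of $T(2t-1-\omega,\omega)$, the task is to locate the maximiser of $C_t$ over $\omega\in\{k,\dots,t-1\}$. Two reductions trim the problem: since always $r\ge k$, the bound $r\ge\sqrt{tk}/4$ is automatic once $k\ge t/16$; and since always $r\le t-1$, the bound $r\le 10\sqrt{tk}$ is automatic once $k\ge t/100$. So the real content concerns $2\le k\ll t$. The parts of $T(2t-1-\omega,\omega)$ have sizes $a_1,\dots,a_\omega$ each within $1$ of $\bar a:=(2t-1-\omega)/\omega$, and $C_t(\omega)=e_k(a_1,\dots,a_\omega)$. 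Maclaurin's inequality gives the rounding-free upper bound $C_t(\omega)\le h(\omega):=\binom{\omega}{k}\bar a^{\,k}$, while the opposite end of the Maclaurin chain, $e_k\ge\binom{\omega}{k}(\prod_i a_i)^{k/\omega}$, combined with $(\prod_i a_i)^{1/\omega}\ge\bar a\,e^{-1/(4\bar a^{2})}$ for numbers within $1$ of $\bar a$, gives the lower bound $C_t(\omega)\ge h(\omega)\,e^{-k/(4\bar a^{2})}=h(\omega)\,e^{-O(k^{2}/t)}$ in the relevant range (where $\bar a\asymp\sqrt{t/k}$).

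I would then analyse the smooth function $h$. One has $\frac{d}{d\omega}\ln h(\omega)=\sum_{i=0}^{k-1}\frac{1}{\omega-i}-\frac{k}{2t-1-\omega}-\frac{k}{\omega}$, and the elementary estimates $\frac{k}{\omega}+\frac{k(k-1)}{2\omega^{2}}\le\sum_{i=0}^{k-1}\frac{1}{\omega-i}\le\frac{k}{\omega-k+1}$ turn ``$\frac{d}{d\omega}\ln h>0$ on $[k,\sqrt{tk}/4]$'' into $(k-1)(2t-1-\omega)>2\omega^{2}$ and ``$\frac{d}{d\omega}\ln h<0$ on $[10\sqrt{tk},t-1]$'' into $(k-1)(2t-1-\omega)<\omega(\omega-k+1)$; both hold with room to spare thanks to the constants $\tfrac{1}{4}$ and $10$ (e.g.\ in the second, the left side is $\le 2tk$ and the right side is $\ge 90\,tk$). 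Hence $h$ increases on $[k,\sqrt{tk}/4]$ and decreases on $[10\sqrt{tk},t-1]$, so $\sup_{\omega\le\sqrt{tk}/4}h=h(\sqrt{tk}/4)$, $\sup_{\omega\ge10\sqrt{tk}}h=h(10\sqrt{tk})$, and the maximiser $\tilde\omega$ of $h$ on $[k,t-1]$ lies strictly inside $(\sqrt{tk}/4,10\sqrt{tk})$; only monotonicity on these two end intervals is used, not global unimodality.

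To finish, let $\omega_0$ be the integer nearest $\tilde\omega$, which then lies in the target window, and observe that rounding at the critical point costs only $h(\omega_0)\ge h(\tilde\omega)\,e^{-O(k^{1/2}t^{-3/2})}$, so $C^*_t(k)\ge C_t(\omega_0)\ge h(\tilde\omega)\,e^{-O(k^{2}/t)}$. On the other side, integrating $\frac{d}{d\omega}\ln h$ (equivalently, sharp Stirling for the binomial) gives $\ln h(\sqrt{tk})-\ln h(c\sqrt{tk})=\frac{(c-1)^{2}}{2c}\cdot\frac{k^{3/2}}{\sqrt t}+(\text{lower order})$, so $h(\tilde\omega)\ (\ge h(\sqrt{tk}))$ exceeds $h(\sqrt{tk}/4)$ and $h(10\sqrt{tk})$ by factors $\exp\!\big(\Omega(k^{3/2}/\sqrt t)\big)$, with ideal leading coefficients $\tfrac{(1/4-1)^2}{1/2}=\tfrac{9}{8}$ and $\tfrac{(10-1)^2}{20}=\tfrac{81}{20}$ in the exponent. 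Since $k^{3/2}/\sqrt t$ dominates $k^{2}/t$ whenever $k\ll t$, these gaps beat the loss $e^{-O(k^{2}/t)}$, so $C_t(\omega_0)>h(\sqrt{tk}/4)$ and $C_t(\omega_0)>h(10\sqrt{tk})$. Consequently any $\omega\notin[\sqrt{tk}/4,10\sqrt{tk}]$ has $C_t(\omega)\le h(\omega)\le\max\{h(\sqrt{tk}/4),h(10\sqrt{tk})\}<C_t(\omega_0)\le C^*_t(k)$ and cannot be the maximiser, giving the claimed range for $r$. The refinement $T^*_t(k)=T(t,t-1)=K^-_t$ for $k\ge 2t/3$ I would obtain from Lemma~\ref{lem:largekdense}; alternatively, for such $k$ a direct check shows $C_t(\omega)$ is increasing in $\omega$ on $[k,t-1]$, so the maximiser is $\omega=t-1$.

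The crux is the quantitative comparison in the last step. To leading order $h$ is essentially constant (of size $\approx 2^{k}\binom{t-1}{k}$) across the whole window $[\sqrt{tk}/4,10\sqrt{tk}]$, so one is separating quantities that agree to first order, and both the ``gap'' $h(\tilde\omega)/h(\text{endpoint})$ and the ``rounding loss'' $C_t(\omega_0)/h(\omega_0)$ are exponentials of comparable flavour, $\exp(\pm\Theta(k^{a}/t^{b}))$. Controlling this requires handling the subexponential factors honestly --- sharp Stirling, or the integral estimate above --- rather than crude bounds such as $\binom{\omega}{k}\le(e\omega/k)^{k}$ or $C_t(\omega)\ge\binom{\omega}{k}\lfloor\bar a\rfloor^{k}$; and it is precisely the generous constants $\tfrac{1}{4}$ and $10$, which make the gap coefficients ($\tfrac{9}{8}$, $\tfrac{81}{20}$) dominate a loss that is only an $o(1)$ fraction of the gap, that let the inequalities close.
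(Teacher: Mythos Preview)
Your plan is sound and would go through, but it departs from the paper's argument in one substantial respect. For the \emph{upper} bound $r\le 10\sqrt{tk}$ your approach and the paper's are essentially the same: both upper-bound $C_t(\omega)$ by $h(\omega)=\binom{\omega}{k}\bar a^{\,k}$, show this is small when $\omega\ge 10\sqrt{tk}$, and beat it with an explicit construction near $\omega\approx\sqrt{tk}$. For the \emph{lower} bound $r\ge\sqrt{tk}/4$, however, the paper does not run the same analytic comparison. Instead it proves a combinatorial ``splitting'' lemma (Lemma~\ref{lem:smalla}): if a part of the optimizer has size $a_1\ge 3$, one replaces it by two parts of total size $a_1-1$ and shows the $k$-clique count goes up unless $(a_1-1)^2<(4n-3k+7-4a_1)/(k-1)$. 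This directly bounds the maximum part size, hence $r\ge n/a_1\ge\sqrt{tk}/4$, with no need to compare two nearby values of $h$ differing only at second order.

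The trade-off is that your route is pleasantly uniform (one tool, the shape of $h$, handles both ends) but forces you into exactly the delicate step you flag: separating $h(\tilde\omega)$ from $h(\sqrt{tk}/4)$, which agree to leading order, by a margin $\exp(\Theta(k^{3/2}/\sqrt t))$ large enough to absorb the rounding loss $\exp(-O(k^2/t))$. This does close (the ratio of exponents is $\Theta(\sqrt{t/k})$, and on the range where the bound is not already trivial one has $k\le t/16$, so the generous constants $1/4$ and $10$ give enough room), but it requires honest second-order Stirling throughout, as you note. The paper's splitting argument bypasses this entirely: it is elementary, uses no asymptotics, and gives the lower bound on $r$ in a couple of lines once Lemma~\ref{lem:smalla} is in hand. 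So your approach buys uniformity at the cost of a more delicate endgame; the paper's is more ad hoc but cleaner at the lower-bound step. One small caveat on both sides: the stated range ``$t>k\ge 1$'' is too generous, since for $k=1$ the optimizer is $\omega=1$ (maximizing $|V|=2t-1-\omega$), which violates $r\ge\sqrt t/4$ for large $t$; the content is really for $k\ge 2$.
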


\paragraph{Proof Idea} 
The proof idea started with a peeling process to encode all cliques of order $K$. This peeling process was used in \cite{FW}, which was highly inspired by the classic paper of Kleitman--Winston \cite{KW}. Roughly speaking, the peeling process maps each clique $K$ into a short encoding $I(K)$ and a ``dense" graph. The authors in \cite{FW} showed that the number of encoding $|\{I(K): K\subset G \}|$ is small, and it is relatively easier to bound the size of the maximum clique minor in a dense graph. 
However, as observed in \cite{FW}, even though the method could provide an almost sharp bound on the total number of cliques in $G$, it fails to provide a satisfactory answer when we fix the clique size $k$. The challenges are two-fold: first, the upper bound on the number of encodings proved in \cite{FW} could be too large for some ranges of $k$, and also we need to characterize the optimal dense graphs optimizing the number of $K_k$.  
In this paper, we made three improvements to overcome the difficulties. The first is that, by a careful analysis of the peeling process, we show that, if $I(K)$ is large, either the number of such encoding is small, or we can find a much bigger clique minor in $G$ which would lead to a contradiction. This idea is particularly important when $k$ is in extreme ranges. 
The second improvement is made by showing a better upper bound for the number of possible representations $I(K)$ when fixing some parameters of $I(K)$. The third is a different method to bound the number of cliques of a given size in the dense graph.

\subsection{Organization of the paper}

In Section \ref{sec:container}, we will define the peeling process as in \cite{FW}, and then prove two key lemmas: one will illustrate how to reduce our problem to the case when the graphs are ``dense" with a better error bound (Key Lemma 2); and another lemma will lower bound the size of the maximum clique minor in the ``dense" graphs given by parameters of the peeling process (Key Lemma 1).

 In Section \ref{sec:kmiddle}, we will prove our main Theorem \ref{thm: summary}  for $k$ in three different ranges. 
 The first two ranges are for $k$ very large range, i.e., $k \geq 2t/3+2\sqrt{t}{\log_2}^{1/4} t$ (Theorem \ref{thm:klarge0}); and for $k$ moderately large, i.e., $\min (k,t-k) \gg O(t^{1/2}{\log_2}^{5/4} t)$ 
 (Theorem \ref{thm:main}). In these two cases, we will also prove  Lemma \ref{lem:largekdense} and Proposition \ref{prop:strucmiddle} (Proposition \ref{prop:Tstarstructure}) which illustrate the structure of $T^*_t(k)$ and help bound the value of $\frac{C^*_t(k)}{|V(T^*_t(k))|}$.
 The last range is for $k$ small, where we will apply Theorem \ref{thm:smallkold}.

In Appendix \ref{section:stru}, we will complete the proof of proposition \ref{prop:strucmiddle} by some simple computations.

In Appendix \ref{section: disproof}, we will prove that Wood's Conjecture \ref{conj:wood2} is false when $\lambda < 0.553$ (Theorem \ref{thm:woodconj2false}) by checking the number of $k$-cliques in the disjoint union of Tur\'an graphs $T((4t-4)/3,(2t-2)/3)$. 

\section{Analysis of the Peeling Process}\label{sec:container}
The development of the hypergraph container's method has been powerful in answering many long-standing questions. It was developed by Balogh-Morris-Samotij \cite{container1} and Saxton-Thomason \cite{container2}. The idea, which is transferring a general setting into a dense setting,  can be traced back to the classical paper of Kleitman--Winston \cite{KW} on graphs.

Our Key Lemmas in this section are 
Lemmas \ref{lem:key1} and \ref{lem:very large counting}, by carefully analyzing the peeling process (container's method \cite{KW}) below. 
The container's method works as follows. Roughly speaking, for each clique $K$ in $G$, we find a way to encode a small number of (ordered) vertices $v_1, \dots, v_{r(K)}$ in $K$, call it $I(K)$. In other words, we gradually peel out vertices from $G$ with vertices in $I(K)$ be the landmarks. We want the total number of encoding $(v_1, \dots, v_{r(K)})$ to be small. Different cliques may have the same encoding and we can group all the cliques $K$ by the different encoding. The vertices in $K \setminus I(K)$ are contained in a ``dense" subgraph of $G$. And we could bound the number of the cliques (of order $k-r(K)$) ( Lemma \ref{lem:largekdense} and Proposition \ref{prop:strucmiddle}).

We now describe the peeling process, which is almost the same procedure as in Fox and the second author \cite{FW} which was heavily motivated by \cite{KW}. However, the analysis of the peeling process in the current paper is much more involved, since we would need to bound the number of cliques for a fixed size $k$. We will elaborate on the difficulties and how we overcome them in the next subsection after the description of the peeling process.

\subsection{Description of the Peeling Process}

 Now we describe how to encode each clique $K$ by some sequence $v_1,\ldots,v_{r(K)}$. To determine the encoding for each clique $K$, we apply the following \textit{peeling process} for $K$.

\begin{peel}
    Firstly, we preorder vertices of $G$. Let $G_0=G$. We delete vertices in $G_0$ one by one until some vertex $v_1 \in K$ has the smallest degree. (We break the tie by the predefined ordering on all the vertices in $G$). In this way, we obtain an induced subgraph $G_1$ that contains $K$ in which $v_1$ has the minimum degree. We repeat this process as follows:   
 \begin{enumerate}
     \item  After picking $v_i$ and thus  obtaining the associated $G_i$, delete from $G_i$ vertex $v_i$ and its non-neighborhood $D_i$. We called this induced subgraph $G'_i$;
     \item Delete vertices in $G'_i$ one by one until some vertex in $K$ has the smallest degree. (We break the tie by the predefined ordering). Let this vertex in $K$ be $v_{i+1}$ and the remaining graph be 
     $G_{i+1}$. 
     We call the set of deleted vertices in this {deleting process} as $Y_i = V(G_i') \setminus V(G_{i+1})$.
 \end{enumerate}

Let $n_i$ be the number of vertices in $G_i$, and also let $d_i$ be the missing degree of $v_i$ in $G_i$, i.e. $d_i=|D_i|$. 
We call the process of finding $v_{i}$ and $G_{i}$ from $G_{i-1}$ the $i$-th step.

We call step $r$ the stopping step and $G_r$ the terminal graph, and let $r(K)=r$ if $r$ is the least positive integer such that 
\begin{enumerate}
\item $n_r \leq t - r$, or
\item $d_r\le \frac{1}{2}(n_{r}+r-t)^{1/2}$,  or
\item $r=|V(K)|$.
\end{enumerate} 

 \end{peel}

For any clique $K$, the peeling process above gives a sequence $v_i$, $G_i$, $D_i$ and $Y_i$. Let the {\it layer} at step $i$ be denoted as $L_i:=D_i\cup Y_i$. Since no more vertices are deleted from the terminal graph $G_r$, for convenience, write $Y_r=\emptyset$.

\begin{figure}[H]
\begin{center}
\includegraphics[width=5cm]{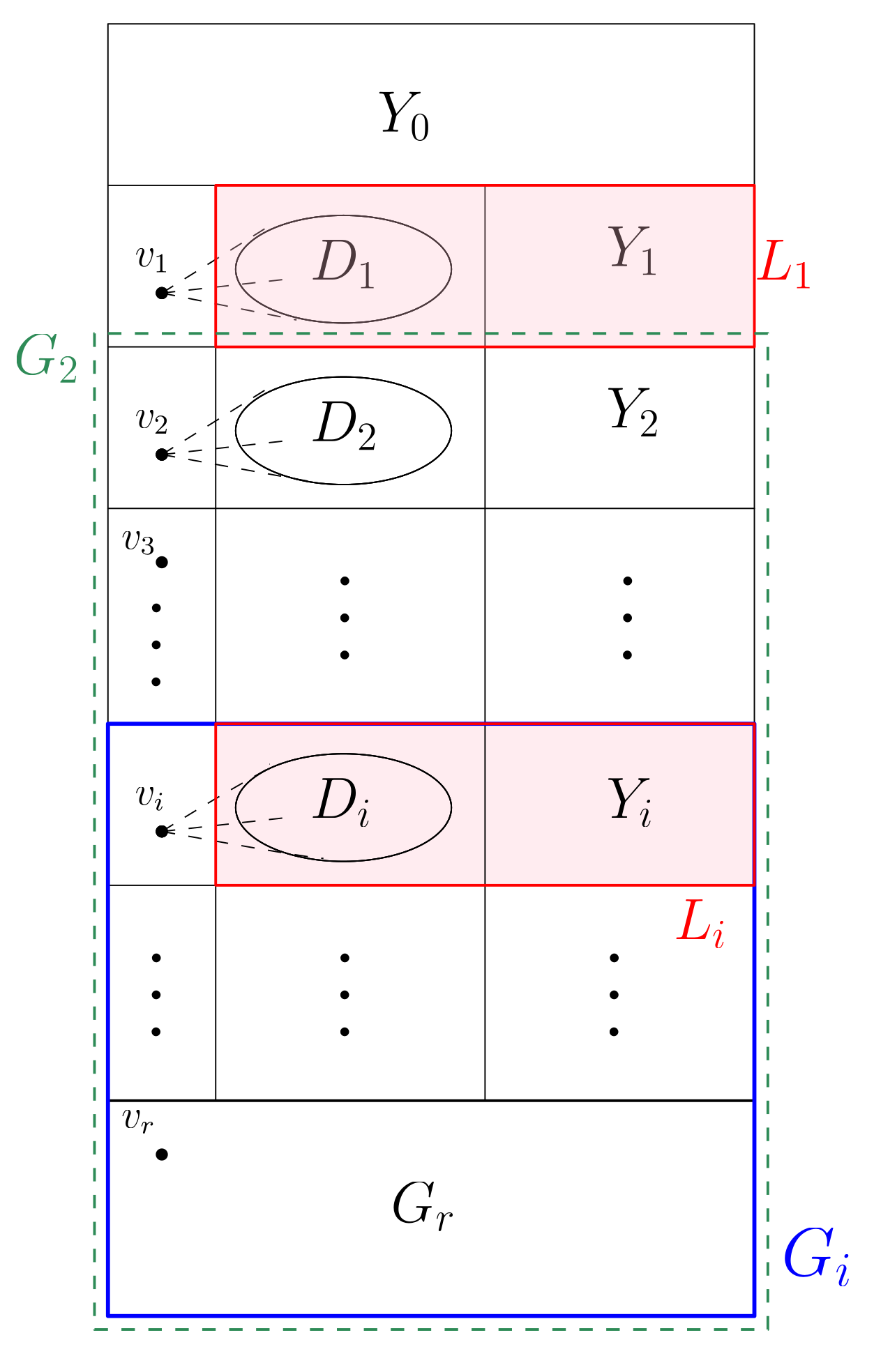}
\end{center}
\caption{Illustration of the notations in the Peeling Process: \footnotesize{ $v_i$ is the vertex with minimum degree in $G_i$, and $D_i$ are the set of non-neighbors of $v_i$ in $G_i$. The set $Y_i$ are the extra vertices deleted until $v_{i+1}$ is the minimum degree vertex. For instance, the red box at the top indicates the layer $L_1$ which is the union of $D_1$ and $Y_1$. 
Notice that $Y_i$ can be an empty set.} }
\label{fig:figure1}
\end{figure}

One reason for applying this peeling process is that, after the first step, we will get a graph  $G_2$ whose size is independent from $n$. The result of Thomason \cite{Th1} implies, as $G$ does not contain a $K_t$-minor, every subgraph of it has a vertex of degree at most $d:=\beta t\sqrt{\ln t}$ when $t$ is sufficiently large. 
{In this paper, without special notice, when we assume $t$ is sufficiently large, we assume $t$ is large enough so that the minimum degree of any $K_t$-minor free graph is at most $\beta t\sqrt{\ln t}$. Since $v_1$ is of minimum degree in $G_1$ and $G_2 \subset N_G(v_1)$, we know \[d_2<n_2 = |G_2| \leq d+1.\] 
 
The stopping condition 2 corresponds to the terminal graph $G_r$ being a ``dense" graph, as the maximum missing degree in $G_r$ is small. The idea from \cite{FW} is that if a graph is dense, then its clique minor size is a simple function in terms of its order and its clique number. 

\begin{lem}[Lemma 2.1, \cite{FW}] \label{lem:densehnumber}
Let $G$ be a graph on $n$ vertices with minimum degree $\delta$ and clique number $\omega$ (the order of the largest clique). Let $\BD=n-\delta-1$, be the maximum missing degree, which is also the maximum degree of the complement of $G$.
We say  $G$ is dense if $n \geq \omega+2\BD^2+2$ or $\BD \leq 1$.
 If $G$ is dense, then the largest $t$ such that $K_t$ is a minor of $G$ is $\lfloor \frac{n+\omega}{2}\rfloor$.
\end{lem}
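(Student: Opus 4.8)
Write $h(G)$ for the largest $t$ such that $K_t$ is a minor of $G$. The statement splits into the upper bound $h(G)\le\big\lfloor\frac{n+\omega}{2}\big\rfloor$, which I expect to need no density hypothesis, and the reverse bound, which is where ``dense'' enters. For the easy direction, fix a model of $K_s$ in $G$, i.e.\ pairwise disjoint, pairwise adjacent, connected branch sets $B_1,\dots,B_s$. The singleton branch sets are pairwise adjacent, hence form a clique, so at most $\omega$ of the $B_i$ are singletons and every other one has at least two vertices; writing $p$ for the number of singletons, $n\ge\sum_i|B_i|\ge p+2(s-p)=2s-p\ge 2s-\omega$, so $s\le\frac{n+\omega}{2}$ and thus $s\le\big\lfloor\frac{n+\omega}{2}\big\rfloor$.

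For the lower bound, set $H=\overline G$ (so $\Delta(H)=\BD$), let $C$ be a maximum clique of $G$ and put $W=V(G)\setminus C$, $m=|W|=n-\omega$. Call an edge $xy$ of $G$ with $x,y\in W$ \emph{good} if $x$ and $y$ have no common non-neighbour, i.e.\ $N_H(x)\cap N_H(y)=\emptyset$; equivalently every vertex other than $x,y$ is $G$-adjacent to $x$ or to $y$, so $\{x,y\}$ is a dominating edge of $G$. The plan is to find $\lfloor m/2\rfloor$ pairwise disjoint good edges inside $W$; together with the $\omega$ singletons $\{c\}$, $c\in C$, these form a model of $K_{\omega+\lfloor m/2\rfloor}=K_{\lfloor(n+\omega)/2\rfloor}$. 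Indeed, each good pair is connected, the singletons are pairwise adjacent because $C$ is a clique, and each good pair, being a dominating edge, meets every other branch set in an edge — so all adjacencies of the model come for free. It remains only to exhibit the disjoint good edges, that is, a matching of size $\lfloor m/2\rfloor$ in the graph on $W$ whose edges are exactly the good edges.

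To produce this matching I would pass to the auxiliary graph $F$ on $W$ where $x\sim_F y$ iff the distance from $x$ to $y$ in $H$ is at most $2$; then $xy$ is good precisely when $xy\notin E(F)$, so the good-edge graph is $\overline F$. Each vertex has at most $\BD$ vertices at $H$-distance $1$ and at most $\BD(\BD-1)$ at $H$-distance exactly $2$, so $\Delta(F)\le\BD^2$ and $\delta(\overline F)\ge m-1-\BD^2$. The hypothesis $n\ge\omega+2\BD^2+2$, i.e.\ $m\ge 2\BD^2+2$, then gives $\delta(\overline F)\ge m/2\ge(m-1)/2$ — this is exactly what the ``$2\BD^2$'' is calibrated for — and a graph on $m$ vertices with minimum degree at least $(m-1)/2$ has a matching of size $\lfloor m/2\rfloor$ (for instance it has a Hamilton path, by Dirac's theorem applied after adjoining a universal vertex). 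In the remaining case $\BD\le 1$, taking $C$ to be a maximum clique forces $F$ to have no edges at all (then $H$ is a matching, $H[W]$ is empty by maximality of $C$, and distinct vertices of $W$ cannot share an $H$-neighbour), so $\overline F$ is complete and a matching of size $\lfloor m/2\rfloor$ is immediate; the cases $m\le 1$ are trivial. Combining the two directions gives $h(G)=\big\lfloor\frac{n+\omega}{2}\big\rfloor$.

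I expect the only real work to lie in the last two paragraphs: verifying that a good edge is genuinely a dominating edge so that all cross-adjacencies of the minor model are automatic, pinning down the $H$-distance-$\le 2$ degree count as exactly $\BD^2$ so the threshold $2\BD^2$ suffices for the matching step, and disposing of the $\BD\le 1$ and $m\le 1$ corner cases. Conceptually the argument is short; the one idea that makes it run is replacing the several awkward conflict conditions that arise from pairing $W$ into arbitrary $G$-edges by the single sufficient condition ``pair $W$ into dominating edges'', which simultaneously secures every adjacency needed in the $K_{\lfloor(n+\omega)/2\rfloor}$-model.
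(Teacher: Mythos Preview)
The paper does not prove this lemma; it is quoted from \cite{FW} as Lemma~2.1 there and used as a black box. Your argument is correct and is essentially the proof given in \cite{FW}: the upper bound by observing that singleton branch sets in a $K_s$-model form a clique, and the lower bound by taking a maximum clique $C$ as singleton branch sets and matching the remaining $m=n-\omega$ vertices into dominating edges. The passage through the auxiliary graph $F$ (the square of $\overline G$ restricted to $W$) with $\Delta(F)\le\BD^2$, so that $\delta(\overline F)\ge m-1-\BD^2\ge m/2$ under the hypothesis $m\ge 2\BD^2+2$, and then invoking Dirac to extract a matching of size $\lfloor m/2\rfloor$, is exactly the mechanism in the original. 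Your treatment of the $\BD\le 1$ case --- where $\overline G$ is a matching, maximality of $C$ forces $\overline G[W]$ to be empty, and no two vertices of $W$ share an $\overline G$-neighbour, so $F$ is edgeless --- is also the same. There is nothing to correct.
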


\begin{definition}
Given a graph $G$, let $\BD(G)$ be its maximum missing degree, and $\omega(G)$ is the order of the largest clique in $G$. We define the following.
\begin{enumerate}
    \item Let $\mathcal{D}$ be the family of all dense graphs, i.e., the set of $G$ such that $|V(G)| \geq \omega(G)+2\BD(G)^2+2$ or $\BD(G) \leq 1$.
    \item Let $\mathcal{G}_s$ be the family of graphs $G$ such that $\lfloor \frac{|V(G)|+\omega(G)}{2} \rfloor \leq s-1$.
    \item Let $\mathcal{H}^s_m$ be the family of graphs $H$ with at most $m$ vertices and clique minor in $H$ has size at most $s$.
\end{enumerate}
    
\end{definition}

Lemma \ref{lem:densehnumber} guarantees that if $G$ is dense and does not have a $K_s$-minor, then $G$ is in $\mathcal{G}_s$. On the other hand, it also showed that if $G$ is dense and is in $\mathcal{G}_{t+1}-\mathcal{G}_t$, then $G$ must contain a $K_t$ minor. Note that there can be graphs that are not dense but also belong to $\mathcal{G}_t$.

\subsection{Analysis of the Peeling Process}\label{subsec:analysis}

In Fox and Wei \cite{FW}, the number of all cliques in $K_t$-minor free graphs is bounded by the product of the error term, which is the number of possible encoding with length at most $r_0=4t^{1/2}{\log_2}^{1/4} t$, and the main term, which is the maximum number of all cliques in graphs in $\mathcal{G}_{t}$. 

When the clique size $k$ is fixed, the error term could be too large. Thus, we need to understand how the parameters given by the peeling process are related to the maximum clique minor size. In Lemma \ref{lem:key1} and Lemma \ref{lem:rs code}, we will show that, in many cases, either the peeling process stops very quickly, and then we have $r(K)$ small and the number of possible encoding for this kind of cliques is small; or the size of the maximum clique minor in the remaining graph $G_r$ is much smaller than $t-r$, resulting in fewer number of such cliques of order $k$. We will also show an improved bound for the number of possible encodings in Lemma \ref{lem:very large counting}.

\begin{definition}
For a given clique $K$ and its peeling sequences, a vertex subset $A\subseteq V(G)$ such that $A\cap K= \emptyset$ and $A \cap V(G_{r(K)})=\emptyset $ is called \emph{an extra branch disk} of $K$ if the induced subgraph $G[A]$ is connected and every vertex in $V(K)\cup V(G_r)$ has at least one neighbor in $A$. 
\end{definition}

To construct a large clique minor, we would like to use the vertices in $K$, together with contracting each branch disk into a single vertex. Thus, we want to find as many disjoint branch disks as possible that are also pairwise adjacent. This motivates us to define the following.

\begin{definition} \label{def: branch vtx set}
  A collection $\mathcal{A}=\{A_1,A_2,\dots,A_s\}$ of disjoint vertex subsets $A_i$ is called \emph{branch vertex set} of $K$, if each $A_i$ is an extra branch disk of $K$, and  for any $1 \leq i < j \leq s$, the two disks $A_i,A_j$ are adjacent, i.e., there exist $x\in A_i,y\in A_j$ such that $xy\in E(G)$. Let $s(K)$ be the maximum size of branch vertex set $\mathcal{A}$ of $K$.
\end{definition}

\begin{claim} \label{claim: branch vtx set}
     Given $K$ and its peeling process which ends in $r = r(K)$ steps. If $G_r$ contains $K_c$ as a minor, then for any branch vertex set $\mathcal{A}$ of $K$, the subgraph induced by $V(G_r)\cup \mathcal{A} \cup \{ v_1,v_2,\dots,v_{r-1}\}$ contains a clique minor of order $r-1+c+|\mathcal{A}|$. 
\end{claim}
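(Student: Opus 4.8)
The plan is to build the promised clique minor of order $r-1+c+|\mathcal{A}|$ explicitly by exhibiting $r-1+c+|\mathcal{A}|$ pairwise-adjacent connected branch sets inside the subgraph induced by $V(G_r)\cup\bigcup_{A\in\mathcal{A}}A\cup\{v_1,\dots,v_{r-1}\}$. The branch sets come in three families. First, since $G_r$ contains $K_c$ as a minor, fix a model of $K_c$ in $G_r$: this gives $c$ disjoint connected vertex sets $B_1,\dots,B_c\subseteq V(G_r)$, pairwise adjacent. Second, each $A_i\in\mathcal{A}$ is itself a single connected branch set, and by the definition of branch vertex set the $A_i$ are pairwise disjoint and pairwise adjacent; moreover by the definition of extra branch disk each $A_i$ is disjoint from $V(G_r)$ and from $V(K)$, so in particular disjoint from all the $B_j$. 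Third, the singletons $\{v_1\},\dots,\{v_{r-1}\}$; these lie in $K$, hence are disjoint from every $A_i$ and (since $v_1,\dots,v_{r-1}\notin V(G_r)$, as they were deleted during the peeling) from every $B_j$. So all $r-1+c+|\mathcal{A}|$ sets are pairwise disjoint and each is connected.

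It remains to verify pairwise adjacency across the three families, which is the only substantive step. Adjacency within $\{B_j\}$ and within $\{A_i\}$ is given. For $B_j$ versus $A_i$: by the definition of extra branch disk, every vertex of $V(G_r)$ has a neighbor in $A_i$, so picking any vertex of $B_j\subseteq V(G_r)$ yields an edge to $A_i$. For $\{v_\ell\}$ versus $A_i$: again by the branch-disk property every vertex of $V(K)$ has a neighbor in $A_i$, and $v_\ell\in V(K)$, so $\{v_\ell\}$ is adjacent to $A_i$. For $\{v_\ell\}$ versus $B_j$: here I would use the structure of the peeling process rather than the branch-disk definition. When $v_\ell$ was selected it had minimum degree in $G_\ell$, and crucially $v_\ell$ is adjacent in $G$ to every vertex of $G_\ell'$ except those in its non-neighborhood $D_\ell$, which are exactly the vertices removed at that stage; since $V(G_r)\subseteq V(G_{\ell+1})\subseteq V(G_\ell')\subseteq N_G(v_\ell)$, the vertex $v_\ell$ is adjacent to every vertex of $V(G_r)$, hence to every vertex of $B_j$. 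Finally, for $\{v_\ell\}$ versus $\{v_{\ell'}\}$ with $\ell<\ell'$: by the same nesting, $v_{\ell'}\in V(G_{\ell'})\subseteq V(G_\ell')\subseteq N_G(v_\ell)$, so $v_\ell v_{\ell'}\in E(G)$.

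Having checked all pairs, the $r-1+c+|\mathcal{A}|$ sets form a model of $K_{r-1+c+|\mathcal{A}|}$, which is the claim. The main obstacle is bookkeeping the containments $V(G_r)\subseteq V(G_{\ell+1})\subseteq V(G_\ell')\subseteq N_G(v_\ell)$ carefully from the description of the peeling process, and confirming that $v_1,\dots,v_{r-1}$ really are still present in $G$ with these neighborhoods (they are vertices of $G$; they are simply deleted from the \emph{working subgraphs} $G_i$, not from $G$) — once this is set up, every adjacency follows either from the definition of extra branch disk or from the minimum-degree selection rule, and disjointness is immediate from the three families living in $K$, in $\mathcal{A}$ (disjoint from $K$ and $G_r$), and in $G_r$ respectively.
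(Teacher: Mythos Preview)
Your proposal is correct and follows exactly the approach of the paper's proof, only spelled out in full detail; the paper compresses all of this into two sentences, invoking that each $v_j$ with $j\le r-1$ is adjacent to every vertex of $G_r$ and then deferring the remaining adjacencies to the definitions of extra branch disk and branch vertex set. One small simplification: for the adjacency $\{v_\ell\}$ versus $\{v_{\ell'}\}$ you can just note that both vertices lie in the clique $K$, rather than tracing the nesting $V(G_{\ell'})\subseteq N_G(v_\ell)$.
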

Notice that we could choose $c\ge k-r+1$ as $G_r$ contains $k-r+1$ vertices in the clique $K$.
\begin{proof}

By the definition of the peeling process, each $v_j$ where $j\le r-1$ is adjacent to every vertex in $G_r$. Then the claim holds by the connectivity condition in the definitions of extra branch disk and branch vertex set. 
\end{proof}

\begin{definition}
    Suppose  $M$ is a function in terms of $t$ such that $\log M = o(\log t)$. For a fixed clique $K$,
    let $R_M(K)$ be  the number of $i\in [r(K)-1]$ such that $|Y_i| \ge M$. Let $s_M(r,r_l)$ be the minimum value of $s(K)$ among all cliques $K$ with indexes $r(K)=r$ and $R_M(K) = r_l$.
    {If there is no clique $K$ of order $k$ with $r(K)=r$ and $R_M(K)=r_l$, we set $s_M(r,r_l)=+\infty$.}
\end{definition}

The main goal of this section is to prove the following two key lemmas.
Key Lemma 1 will help us control the number of cliques in the terminal graph $G_r$ by stating that $s_M(r,r_l)$ is relatively large when $r$ or $r_l$ is large.

\begin{lem}[Key Lemma 1] \label{lem:key1} 
    
    For large enough $t$, for any fixed $r$, and for any $M=M(t)\ge 1$, we have 
    \[
    \min_{r_l< r} s_M(r,r_l) \ge \frac{r}{3}-7\log_2 t = \frac{r}{3}-O(\log t). 
    \]
    Recall $d = \beta t \sqrt{\ln t}$. Moreover, for any fixed $r$ and $r_l$, and for any $\epsilon\in (0,\frac{1}{6})$ and $M=M(t)\ge 1$, 
    \[ 
    s_M(r,r_l)\ge r_l-1 -7\cdot  \left(\log_{{1}/({1-\epsilon})}d+\left(8r_l\cdot \log_{{1}/({2\epsilon})} {M}\right)/{M}\right).
    \]  
\end{lem}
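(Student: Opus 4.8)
We prove the two bounds by extracting, from the peeling sequence of a clique $K$ with $r(K)=r$, a branch vertex set $\mathcal{A}$ whose size is at least the claimed quantity; since $s_M(r,r_l)$ is by definition the minimum of $s(K)$ over the relevant $K$, this suffices. The key structural fact is that for each step $i\le r-1$, the deleted layer $L_i = D_i\cup Y_i$ lies entirely outside $V(K)$ and outside $V(G_r)$, and every vertex of $V(K)\cup V(G_r)$ that survives into $G_i$ is adjacent to $v_i$, hence has a neighbor available in $L_i\cup\{v_i\}$-type structures after contractions; more precisely, $D_i$ is exactly the non-neighborhood of $v_i$ in $G_i$, so each layer carries a candidate branch disk. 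The plan for the first bound is: from the $r-1$ layers $L_1,\dots,L_{r-1}$, greedily build disjoint, connected, mutually adjacent branch disks, discarding a layer only when it fails the connectivity-to-$V(K)\cup V(G_r)$ condition or the pairwise-adjacency condition. One shows that each discarded layer can be ``charged'' to at most a constant number of retained disks, and that the number of layers that are structurally bad (too sparse to connect up) is $O(\log t)$, using the fact that the missing degrees $d_i$ are controlled: before stopping condition~2 triggers, $d_i > \tfrac12(n_i+i-t)^{1/2}$, and $n_i$ shrinks in a controlled geometric fashion, so only $O(\log t)$ steps can have layers too small to form a usable disk. This yields $s(K)\ge \tfrac{r}{3} - 7\log_2 t$ after bookkeeping the constant-factor charging (the $\tfrac13$ reflecting that a retained disk may block at most two neighbors in the greedy argument).

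For the second bound, the relevant resource is the set of indices $i$ with $|Y_i|\ge M$: there are exactly $r_l$ of them, call them $i_1<\dots<i_{r_l}$. The plan is to show that all but $O\!\left(\log_{1/(1-\epsilon)} d + (r_l \log_{1/(2\epsilon)} M)/M\right)$ of these $Y_{i_j}$ simultaneously yield disjoint, connected, pairwise-adjacent branch disks. Here one picks for each $Y_{i_j}$ a single vertex (or a small connected piece) that connects to $V(K)\cup V(G_r)$; the obstruction to pairwise adjacency or to connectivity is again a sparseness phenomenon, but now the bookkeeping is finer. Because $|Y_{i_j}|\ge M$ is large, within $Y_{i_j}$ we can afford to search a $\Theta(\log_{1/(2\epsilon)} M)$-sized subset and argue, via an $\epsilon$-fraction degree-decrement argument across the $\le r_l$ relevant layers, that the cumulative number of ``collisions'' (a later disk failing to attach to an earlier one) is bounded by $8 r_l \log_{1/(2\epsilon)} M / M$; the $\log_{1/(1-\epsilon)} d$ term comes from the at most that many steps in which the ambient graph size $n_i$ drops below the threshold where disks of the required quality exist, exactly as in the first part but with the $\epsilon$-slack made explicit. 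The factor $7$ again absorbs constant-factor losses in the charging scheme, and the $-1$ accounts for the terminal layer $Y_r=\emptyset$.

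The main obstacle, and where the real work lies, is the second bound's simultaneous control: one must choose the branch disks inside the $Y_{i_j}$'s so that they are pairwise adjacent \emph{and} each attaches to $V(K)\cup V(G_r)$, while only paying a sub-linear-in-$r_l$ number of losses. The natural approach is a Kleitman--Winston / container-style iterative pruning inside the union $\bigcup_j Y_{i_j}$: maintain a pool of ``still good'' layers, and whenever a newly examined layer would fail adjacency with too many chosen disks, delete a bounded set of vertices causing the trouble; the $\epsilon$-parameter governs how aggressively one prunes versus how many layers one keeps, producing the two competing terms $\log_{1/(1-\epsilon)} d$ and $(r_l/M)\log_{1/(2\epsilon)} M$. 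Getting the constants and the interaction with the peeling-process stopping conditions (especially condition~2, which guarantees $d_r$ is small but must not have fired prematurely) to line up is the delicate part; once the charging inequality is set up correctly, the rest is routine arithmetic. I would first carefully re-examine the relationship $d_i$ vs.\ $n_i$ along the peeling sequence (using $d_2 \le d+1$ and the definition of the stopping step) to pin down the $O(\log t)$ and $\log_{1/(1-\epsilon)} d$ terms, then set up the greedy/pruning argument on the layers, and only at the end verify the explicit constants $7$, $1/3$, and $8$.
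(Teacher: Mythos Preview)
Your proposal is a high-level plan rather than a proof, and in two key places it describes mechanisms that do not match what actually makes the argument go through.

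For the first bound, the factor $\tfrac{1}{3}$ does not come from a charging argument in which ``a retained disk may block at most two neighbors.'' In the paper, one partitions the layers $L_2,\dots,L_{r-1}$ into at most $O(\log d_2)$ brackets along which $d_i$ drops by at most a fixed constant factor (the paper uses $7/8$), and then inside each bracket one takes \emph{three consecutive} layers $L_a,L_{a+1},L_{a+2}$ and sets $A_a = D_a\cup D_{a+1}\cup D_{a+2}$. Since $|A_a|\ge d_a + 2\cdot\tfrac{7}{8}d_a \ge 2d_a+1$, the basic fact that any set of size $\ge 2d_a+1$ inside $G_a$ is connected and adjacent to every vertex of $G_a$ makes $A_a$ a valid extra branch disk. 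Pairwise adjacency of the $A_a$'s is then automatic for the same size reason. The $\tfrac{1}{3}$ is thus the cost of merging three layers into one disk; the $O(\log t)$ loss is the number of brackets.

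For the second bound, your plan locates the branch disks inside the sets $Y_{i_j}$ and proposes a container-style pruning of $\bigcup_j Y_{i_j}$. That is not how the bound is obtained, and it is unclear your approach could be made to work: a single vertex of $Y_i$ need not be adjacent to all of $V(K)\cup V(G_r)$, and you have no control on the internal structure of $Y_i$ beyond its cardinality. In the paper, the disks are built from $D_i$ together with the single distinguished vertex $y_i\in Y_i$ (the last-removed one), whose missing degree in $D_i$ is at most $d_i - d_{i+1}\le \epsilon d_i$; one then borrows a small connector set $W$ (of size $O(\log_{1/(2\epsilon)} d_i)$) from the lower layers to repair connectivity of $D_i\cup\{y_i\}$. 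The role of $M$ is not that one searches inside $Y_{i_j}$: after setting aside the layers with $|Y_i|\ge 2d_i+1$ (for which $Y_i$ itself is a valid disk), the remaining large-$Y$ layers satisfy $d_i\ge |Y_i|/2\ge M/2$, and this lower bound on $d_i$ is what controls the length $T_0\gtrsim M/\log_{1/(2\epsilon)} M$ of the ``processing intervals'' inside each $\epsilon$-bracket. The term $(8r_l\log_{1/(2\epsilon)} M)/M$ counts the constant number of layers wasted per interval, summed over the roughly $r_l/T_0$ intervals; the term $\log_{1/(1-\epsilon)} d$ counts the brackets. Without identifying the special vertex $y_i$ and its missing-degree property in $D_i$, and without the $d_i\ge M/2$ observation, the bookkeeping you describe does not close.
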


Key Lemma 2 gives an upper bound on the number of cliques of order $k$ by combining a better error term and the count of $K_{k-r}$ in the terminal graph $G_r$.

\begin{lem}[Key Lemma 2]\label{lem:very large counting} 
Let $r_0 = 4 t^{1/2}{\log_2}^{1/4} t$ and recall $d = \beta t \sqrt{\ln t}$. When $t$ is sufficiently large, for any function $M=M(t)\ge 0$,
the maximum number of cliques of order $k$ in a graph without $K_t$ as a minor is at most 
\[ 
\sum^{\min(r_0, k))}_{r=1} \sum_{\substack{r_l<r: \\ s_M(r,r_l) \le t-k}}  
\left(n\cdot {r-1 \choose r_l}M^{(r-r_l-1)}{r_0 \choose r_l}\left( \frac{d}{r_0}\right) ^{r_l}
\mathcal{N}_{k-r}((\mathcal{G}_{t-r-s_M(r,r_l)+1}\cap \mathcal{D}) \cup \mathcal{H}^{t-r-s_M(r,r_l)}_{t-r}) \right).
\]
\end{lem}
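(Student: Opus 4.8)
The plan is to organize the count of $k$-cliques by grouping cliques according to the data produced by their peeling processes, and to bound each group separately. First I would fix a clique $K$ of order $k$ and run the peeling process, obtaining $r=r(K)\le \min(r_0,k)$ (the bound $r\le r_0$ coming, as in \cite{FW}, from the fact that otherwise the parameters $n_i$ and $d_i$ would force a $K_t$-minor; $r\le k$ is immediate from stopping condition 3). Set $r_l=R_M(K)$, so $r_l<r$. Then I would partition all $k$-cliques of $G$ into classes indexed by the pair $(r,r_l)$ together with the encoding $I(K)=(v_1,\dots,v_{r-1})$ and the identity of the terminal graph $G_r$; the total count is the sum over $(r,r_l)$ of (number of such classes) times (max number of $(k-r+1)$-subsets of each terminal graph that extend to a $k$-clique of the stated type). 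The factor $n$ accounts for the choice of $v_1$ which, unlike the later $v_i$, ranges over all of $V(G)$ rather than a set of size $O(d)$.

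The heart of the argument is the counting of encodings, which I would split into two pieces according to whether $|Y_i|\ge M$ (a ``long'' step, $r_l$ of them) or $|Y_i|<M$ (a ``short'' step, $r-1-r_l$ of them, recalling $Y_r=\emptyset$). First, there are at most $\binom{r-1}{r_l}$ ways to choose which of the $r-1$ steps $i\in[r-1]$ are long. For a short step, once $G'_i$ is known the vertex $v_{i+1}$ together with the set $Y_i$ of deleted vertices is determined by the preorder on $V(G)$ up to the choice of how many vertices to delete; since $|Y_i|<M$, there are at most $M$ choices, giving the factor $M^{r-r_l-1}$. For a long step I would reuse the Kleitman--Winston type bound from \cite{FW}: because $v_{i+1}$ is a minimum-degree vertex of $K$ in the graph remaining after deleting a prefix in the preorder, and because all degrees in play are at most $d$ while the process runs for at most $r_0$ steps, each of the $r_l$ long steps contributes a factor of at most $\binom{r_0}{r_l}^{1/r_l}\cdot (d/r_0)$ in the appropriate amortized sense, so together the $r_l$ long steps contribute $\binom{r_0}{r_l}(d/r_0)^{r_l}$; this is exactly the improvement over \cite{FW} where one only had a cruder per-step bound. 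Multiplying, each $(r,r_l)$-class contributes at most $n\binom{r-1}{r_l}M^{r-r_l-1}\binom{r_0}{r_l}(d/r_0)^{r_l}$ choices of $(v_1,\dots,v_{r-1})$, and hence (since the terminal graph $G_r$ is determined by the encoding as an induced subgraph of $G$) at most that many terminal graphs.

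It remains to bound, for each fixed $(r,r_l)$-class, the number of $(k-r+1)$-cliques in the terminal graph $G_r$ that complete a $k$-clique of our type; equivalently the number of $(k-r)$-cliques in $G_r\setminus\{v_r\}$, which is a graph on at most $n_r\le t-r$ vertices (by stopping condition 1, taken together with the bound after step $1$ that $n_2\le d+1$, but in any case the relevant bound is $n_r-1\le t-r$). Here I would invoke Key Lemma 1: since $R_M(K)=r_l$ and $r(K)=r$, the clique $K$ admits a branch vertex set of size at least $s_M(r,r_l)$, so by Claim~\ref{claim: branch vtx set} the terminal graph $G_r$ cannot contain a $K_c$-minor with $r-1+c+s_M(r,r_l)>t$, i.e. $G_r$ has no $K_{t-r-s_M(r,r_l)+2}$-minor. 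If in addition $G_r$ is dense it lies in $\mathcal{G}_{t-r-s_M(r,r_l)+1}\cap\mathcal{D}$ by Lemma~\ref{lem:densehnumber}; if it is not dense then, using $|V(G_r\setminus v_r)|\le t-r$ and the minor bound, it lies in $\mathcal{H}^{t-r-s_M(r,r_l)}_{t-r}$. Either way the number of $(k-r)$-cliques in $G_r\setminus v_r$ is at most $\mathcal{N}_{k-r}\big((\mathcal{G}_{t-r-s_M(r,r_l)+1}\cap\mathcal{D})\cup\mathcal{H}^{t-r-s_M(r,r_l)}_{t-r}\big)$, which is the last factor in the statement; and we may restrict the sum to those $(r,r_l)$ with $s_M(r,r_l)\le t-k$, since otherwise $K_k\not\subseteq G$ forces the class to be empty (for $k-r+1>k-s_M(r,r_l)$ there is no room for the $K_{k-r+1}$ inside $G_r$). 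Summing over $r\le\min(r_0,k)$ and over the admissible $r_l$ gives the claimed bound.

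\textbf{Main obstacle.} I expect the delicate point to be the amortized counting of the ``long'' steps: one must argue that the Kleitman--Winston prefix-deletion bound, applied across the $r_l$ long steps simultaneously with a global budget on the number of deleted vertices, yields the compact factor $\binom{r_0}{r_l}(d/r_0)^{r_l}$ rather than the weaker $\binom{d}{1}^{r_l}=d^{r_l}$ of a naive per-step estimate — and to do so while keeping careful track of the fact that $v_1$ is special and that the short steps have already been accounted for. The bookkeeping that the terminal graph is genuinely determined (not merely bounded in number) by the encoding, so that Key Lemma 1 can be applied clique-by-clique, is the other place where care is needed.
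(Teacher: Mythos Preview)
Your overall architecture is the paper's: group $k$-cliques by $(r,r_l)$, bound the number of encodings, then bound $(k-r)$-cliques in the terminal graph $G_r$ using the branch-vertex-set bound $s_M(r,r_l)$, and restrict to $s_M(r,r_l)\le t-k$. Two points, however, are real gaps rather than details.

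\textbf{The long-step encoding count.} You correctly flag this as the obstacle, but ``in the appropriate amortized sense'' is not an argument, and you cannot simply ``reuse the bound from \cite{FW}'': that paper only gives the cruder $\binom{d}{r_0}$ for the full encoding. The actual mechanism (Lemma~\ref{lem:rs code}) is this. After fixing $v_1$, the encoding is determined by the integer sequence $n_i'=n_i+i-t$, which is strictly decreasing with gap $n_i'-n_{i+1}'\ge \tfrac12\sqrt{n_i'}$ (Fact~\ref{ni' gap}). One partitions the range $(0,d+1-t]$ into at most $r_0$ intervals so that no two $n_j'$ can lie in the same interval; then choosing the $r_l$ values $n_i'$ with $i\in L'$ means choosing $r_l$ of these intervals and one point from each, and convexity gives $\sum\prod |I_j|\le\binom{r_0}{r_l}(d/r_0)^{r_l}$. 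The short steps are interleaved: for $i\in S'$, once $n_{i-1}'$ (hence $d_{i-1}$) is known, $n_i'=n_{i-1}'-d_{i-1}-|Y_{i-1}|$ has at most $M$ choices. Without this interval-partition idea your plan cannot produce the stated factor.

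\textbf{The terminal-graph dichotomy.} Your split into ``$G_r$ dense'' versus ``$G_r$ not dense, hence $|V(G_r)|\le t-r$'' is backward, and your parenthetical assertion that $n_r\le t-r$ holds generally is false: if the process halts at stopping condition~2 one may well have $n_r$ large. The paper instead splits by \emph{which stopping condition} fired. Condition~1 gives $n_r\le t-r$ directly, landing $G_r$ in $\mathcal{H}^{t-r-s_M(r,r_l)}_{t-r}$. Condition~3 means $r=k$ and the encoding is already the clique. Condition~2 (namely $d_r\le\tfrac12(n_r+r-t)^{1/2}$ with $n_r>t-r$ and $r<k$) is exactly what forces $G_r\in\mathcal{D}$: since $\omega(G_r)<t-r$ one gets $n_r-\omega(G_r)>n_r-(t-r)\ge 4d_r^2\ge 2\bar\Delta^2+2$, which is the density condition. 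This is Claim~\ref{Gr is dense}; your plan omits it and so leaves unjustified the implication ``not dense $\Rightarrow$ small''. (There is also an off-by-one in your minor bound: from $r-1+c+s_M(r,r_l)\le t-1$ one gets no $K_{t-r-s_M(r,r_l)+1}$-minor, not $+2$.)
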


\subsection{Proof of Key Lemma 1 (Lemma \ref{lem:key1})}

In the following paper, when we say two disjoint vertex sets $A$ and $B$ are adjacent, it means there exist vertices $x\in A$ and $y\in B$ such that $xy\in E(G)$. If $\{v\}$ and $B$ adjacent, we simply say $v$ and $B$ are adjacent. The next claim lists some simple facts about the peeling process.

\begin{claim} \label{basic facts}
    The sequence of graphs $G_i,Y_i$ and vertices $v_i \in K$  satisfy the following properties. 
\begin{enumerate}
\item $v_i \in G_i$ and $v_i$ is of minimum degree in $G_i$, and every vertex in $G_i$ has a missing degree at most $d_i$;
\item $G_{i+1}$ does not contain $v_i$ and its non-neighbors in $G$;
\item $G_{i+1}$ contains $K \setminus \{v_1,\ldots,v_i\}$;
\item $G_{i+1}$ is contained in the subgraph of $G$ induced on the vertex set $N_G(v_1) \cap \dots \cap N_G(v_i)$, where $N_G(u)$ denotes the neighborhood of $u$ in $G$. 
\item If $A\subseteq V(G_i)$ and $|A|\ge d_i+1$, then for every $v\in V(G_i)$, either $v\in A$, or $v$ and  $A$ are adjacent. Moreover, if $A\subseteq V(G_i)$ and $|A|\ge 2d_i+1$, then $G[A]$, the subgraph of $G$ induced by $A$, is connected.
\item Suppose $Y_i\neq \emptyset$ and let $y_i\in Y_i$ be the last vertex removed in $Y_i$. Then $y_i$ has no less  non-neighbors in $G_{i+1}$ than $v_{i+1}$, which means $y_i$ has at least $d_{i+1}$ non-neighbors in $G_{i+1}$.
\item Let $y\in Y_i$ be the last vertex removed in $Y_i$. Then the missing degree of  $y$ in $D_i$ is at most $d_i-d_{i+1}$. 
\end{enumerate} 

\end{claim}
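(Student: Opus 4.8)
The plan is to verify the seven items in the order listed, since each is either immediate from the rules of the peeling process or leans only on earlier items, the single recurring principle being that $v_i$ --- and in fact every vertex deleted along the way --- is a minimum-degree vertex of the graph current at the moment it is selected or removed. Items 1--4 are pure bookkeeping. For (1): $v_i$ is by definition a minimum-degree vertex of $G_i$ and $d_i=|D_i|$ is exactly its missing degree there, so every other vertex of $G_i$, having at least as large a degree, has missing degree at most $d_i$. For (2): $G_{i+1}\subseteq G_i'=G_i-v_i-D_i$, and $D_i$ is precisely the set of $G$-non-neighbors of $v_i$ that survive into the induced subgraph $G_i$, so $G_{i+1}$ contains neither $v_i$ nor any $G$-non-neighbor of $v_i$. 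For (3): the process never deletes a vertex of $K$ (it halts as soon as a vertex of $K$ attains the minimum degree), and the only vertices removed in passing from $G_i$ to $G_i'$ are $v_i$ and its non-neighbors, none of which lies in the clique $K$ apart from $v_i$ itself. Item (4) is then an induction from (2), using $V(G_{i+1})\subseteq N_G(v_i)$ together with $V(G_{i+1})\subseteq V(G_i)$.

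For (5), I would argue that by (1) each $v\in V(G_i)$ has at most $d_i$ non-neighbors in $G_i$, so a subset $A$ with $|A|\ge d_i+1$ cannot be contained in the union of $\{v\}$ with the non-neighbors of $v$; hence either $v\in A$ or $v$ has a neighbor in $A$. For the connectivity half, suppose $|A|\ge 2d_i+1$ but $G[A]$ splits into nonempty parts $B$ and $C$ with no edges between them; the larger part, say $C$, has $|C|\ge d_i+1$ by integrality, and then any $v\in B$ has all of $C$ among its non-neighbors inside $G_i$, contradicting (1).

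The last two items are where one must be careful, and I expect this to be the only real (though still mild) obstacle: the bookkeeping must keep straight which graph among the nested chain $G_i\supseteq G_i'\supseteq G_{i+1}$ each degree or missing degree is measured in, and it matters that $y_i$ is the \emph{last} vertex removed in forming $Y_i$, not an arbitrary element. For (6): just before $y_i$ is deleted the current graph is $G_{i+1}\cup\{y_i\}$, in which $y_i$ has minimum degree, hence maximum missing degree, which is therefore at least the missing degree of $v_{i+1}$ in that graph, which is at least $d_{i+1}$; unwinding, $y_i$ has at least $d_{i+1}$ non-neighbors within $V(G_{i+1})$. For (7): since $y=y_i\in V(G_i')$ it is a neighbor of $v_i$ in $G_i$ and lies outside $D_i\cup\{v_i\}$, so partitioning $V(G_i)=\{v_i\}\sqcup D_i\sqcup V(G_i')$ shows that the non-neighbors of $y$ in $G_i$ split as its non-neighbors in $D_i$ together with its non-neighbors in $V(G_i')$, and the latter count is at least its number of non-neighbors in $G_{i+1}$, which is $\ge d_{i+1}$ by (6). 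Since by (1) the total is at most $d_i$, the number of non-neighbors of $y$ in $D_i$ --- that is, the missing degree of $y$ in $D_i$ --- is at most $d_i-d_{i+1}$, as claimed.
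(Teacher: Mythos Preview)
Your proof is correct and follows essentially the same approach as the paper's: items 1--4 are dismissed as immediate from the definitions, item 5 uses the missing-degree bound from item 1 in the same way, and items 6--7 hinge on the observation that $y_i$ has minimum degree in $G_{i+1}\cup\{y_i\}$, then combine this with the partition $V(G_i)=\{v_i\}\sqcup D_i\sqcup V(G_i')$ and the bound from item 1. The paper phrases items 6 and 7 as short proofs by contradiction rather than direct arguments, but the content is identical.
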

\begin{proof}
     Facts 1-4 are clear from the description of the peeling process.
     
     First, we will prove Fact 5. For every vertex $v\in V(G_i)$, by Fact 1, its missing degree in $G_i$ is at most $d_i$, so $v$ must have at least one neighbor in $A$ as $|A|\ge d_i+1$. If  $A\subseteq V(G_i)$ with $|A|\ge 2d_i+1$ and $G[A]$ is disconnected, then one connected component of $G[A]$ has at most $d_i$ vertices. Thus any vertex $u$ in this connected component has at least $|A|-d_i\ge d_i+1$ non-neighbors in $G$. As $A\subseteq V(G_i)$, the missing degree of $u$ in $G_i$ is at least $d_i+1$, which contradicts with Fact 1.

     Suppose Fact 6 is not true. In the subgraph induced by $\{y_i\}\cup V(G_{i+1})$, vertex $y_i$ has less missing degree than $v_{i+1}$.  Thus, $v_{i+1}$ should be deleted before $y_i$ which is a contradiction. 
     Suppose Fact 7 is not true. By Fact 1, the missing degree of $y_i$ in $G_i$ is at most $d_i$ and its missing degree in $G_{i+1}$ is less than $d_{i+1}$ which contradicts Fact 6 in 
 Claim \ref{basic facts}. 
\end{proof}

To prove Lemma \ref{lem:key1} and Lemma \ref{lem:very large counting}, a main step is to show that the number of encodings is small. The following simple claim states that the length of encoding $r(K)$ in the peeling process cannot be too large.

\begin{claim}\label{claim:boundr0}
The length of encoding for each clique $K$ is small. In other words, when $t$ is sufficiently large, 
$r(K) < 4t^{1/2}{\log_2}^{1/4} t$. 
\end{claim}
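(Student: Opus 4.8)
The plan is to follow a single quantity, $m_i := n_i + i - t$, through the peeling process and show that it shrinks fast enough to force termination within $4t^{1/2}{\log_2}^{1/4} t$ steps. Write $r = r(K)$; we may assume $r \geq 3$, since otherwise the claimed bound holds trivially for large $t$. First I would record that for every intermediate step $i$ with $2 \leq i \leq r-1$ the process has not yet stopped, so stopping conditions 1 and 2 both fail at step $i$: failure of condition 1 gives $n_i > t-i$, i.e.\ $m_i \geq 1$, and failure of condition 2 gives $d_i > \tfrac12 (n_i + i - t)^{1/2} = \tfrac12 m_i^{1/2}$.

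Next I would set up a square-root recursion. Passing from $G_i$ to $G_{i+1}$ deletes $v_i$, its $d_i$ non-neighbours forming $D_i$, and the (possibly empty) set $Y_i$, so $n_{i+1} \leq n_i - 1 - d_i$ and hence
\[
m_{i+1} \;=\; n_{i+1} + (i+1) - t \;\leq\; m_i - d_i \;<\; m_i - \tfrac12 m_i^{1/2}.
\]
Completing the square gives $m_i - \tfrac12 m_i^{1/2} < \bigl(m_i^{1/2} - \tfrac14\bigr)^2$, and since $m_i \geq 1$ I may take square roots to obtain $m_{i+1}^{1/2} < m_i^{1/2} - \tfrac14$ for every $2 \leq i \leq r-2$ (each such $m_{i+1} \geq 1$, so the roots are real). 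Iterating from $i=2$ to $i=r-2$ and then using $m_{r-1}^{1/2} > 0$ yields $0 < m_2^{1/2} - \tfrac{r-3}{4}$, that is, $r < 4 m_2^{1/2} + 3$.

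Then I would bound $m_2$ using Thomason's theorem: for $t$ large, every $K_t$-minor-free graph has a vertex of degree at most $d = \beta t \sqrt{\ln t}$, and since $v_1$ has minimum degree in $G_1$ with $G_2 \subseteq N_G(v_1)$ we get $n_2 \leq d+1$, hence $m_2 = n_2 + 2 - t \leq \beta t \sqrt{\ln t}$ for $t \geq 3$. Substituting and using $\ln t = (\ln 2)\log_2 t$,
\[
r(K) \;<\; 4\sqrt{\beta}\,(\ln 2)^{1/4}\, t^{1/2} {\log_2}^{1/4} t \;+\; 3 ,
\]
and since $4\sqrt{\beta}\,(\ln 2)^{1/4} = 3.2\,(\ln 2)^{1/4} \approx 2.92 < 4$, the right-hand side is strictly below $4 t^{1/2}{\log_2}^{1/4} t$ once $t$ is sufficiently large.

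I do not anticipate a serious obstacle. The argument is short; the only points needing care are the degenerate cases and the possibility that the process terminates via stopping condition 3 rather than 1 or 2 --- both harmless, because the recursion uses only the \emph{failure} of conditions 1 and 2 at the intermediate steps $2,\dots,r-1$ --- together with keeping the constants sharp enough that the bound lands strictly under $4 t^{1/2}{\log_2}^{1/4} t$ rather than merely $O(t^{1/2}{\log_2}^{1/4} t)$. This last point is exactly why the coefficient $\tfrac12$ in stopping condition 2 and the value $\beta = 0.64$ from Thomason's bound are chosen as they are: together they leave the slack $2.92 < 4$.
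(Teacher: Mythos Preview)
Your proposal is correct and follows the same core idea as the paper: track the quantity $m_i = n_i + i - t$ (the paper writes $n_i'$), derive the recursion $m_{i+1} < m_i - \tfrac12 m_i^{1/2}$ from the failure of stopping conditions 1 and 2 at intermediate steps, and bound $m_2$ via Thomason's degree bound.

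The only difference is in how the recursion is solved. The paper partitions the range of $m_i$ into dyadic intervals $c_j = (d-t+3)/2^j$, bounds the number of steps spent in each interval, and sums a geometric series to obtain $r \leq 1 + 2\log_2 d + 2(d+3)^{1/2}\sum_{j\geq 0}2^{-(j+1)/2}$. Your completing-the-square argument, yielding $m_{i+1}^{1/2} < m_i^{1/2} - \tfrac14$ and hence $r < 4m_2^{1/2}+3$, is cleaner and gives a slightly sharper constant directly. Both arrive at the same bound $4t^{1/2}{\log_2}^{1/4}t$; your route avoids the auxiliary dyadic decomposition at the cost of nothing, so it is a genuine simplification of the paper's argument for this claim.
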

\begin{proof}
This argument is almost the same as in the paper \cite{FW} and it is mainly due to the fact that before stopping,  the bound on $n_{r} - n_{r+1}$ deduced from the bound of $d_r$ in the second stop condition guarantees that each time $n_r$ drops a lot.  Recall that we set $d=\beta t\sqrt{\ln t}$ where $\beta=0.64$. Recall that the result of Thomason \cite{Th1} implies  $n_2 = |G_2| \leq d+1$ when $t$ is sufficiently large. Let $n_i' = n_i + i -t$.
 \begin{fact} \label{ni' gap}
     For every $i< r$, we have $n'_i-n'_{i+1}>\frac{1}{2} (n'_i)^\frac{1}{2}$, and thus $(n'_i)_i$ is strictly decreasing.
 \end{fact}

 \begin{proof}
By the definition of $r(K)$, before stopping, $d_i> \frac{1}{2} (n'_i)^\frac{1}{2}$ for every $i< r$.  Thus $n'_i-n'_{i+1}=(n_i+i-t)-(n_{i+1}+i+1-t)=n_i-n_{i+1}-1=|L_i|\ge |D_i|=d_i>\frac{1}{2} (n'_i)^\frac{1}{2}$ for every $i<r$.

 Because of the first stopping condition, we have $n_i> t-i$ for every $i<r$. Thus, we have $n'_i> 0$ and $d_i\ge 1$ for every $i<r$.
Thus, $n_i'$ is strictly decreasing before stopping. 
 \end{proof}

For each $0 \leq i \leq 2 \log_2 (d-t)$, let $c_i = (d -t+ 3)/2^i$. 
For any $j \leq r$ with $c_i \geq n_j' \geq  c_{i+1}$, we have $n_j'- n_{j+1}' > \frac{1}{2}(n_j')^{1/2}\geq  \frac{1}{2}c_{i+1}^{1/2}$. Therefore, to drop the $n'_j$ value from $c_i$ to $c_{i+1}$, the number of steps it takes is at most
\[ 1 + (c_i -c_{i+1})/(\frac{1}{2}c_{i+1}^{1/2}) = 1 + 2 c_{i+1}^{1/2} = 1 + 2((d -t+ 3)/2^{i+1})^{1/2}.\]
Note that, when $t$ is sufficiently large, for each $2 \leq j \leq r$, there is some $i \leq 2 \log_2 (d-t)$ with $c_i \geq n_j' \geq  c_{i+1}$ because $n'_j\le n'_2 \le d-t+3$. Thus, 
$r  \leq 1 + \sum_{i=0}^{2 \log_2(d-t)} (1 +  2((d -t+ 3)/2^{i+1})^{1/2})) \\
\leq   1 + 2 \log_2 d + 2(d+3)^{1/2}\sum_{i=0}^{\infty}2^{-(i+1)/2}
<  4t^{1/2}{\log_2}^{1/4} t:=r_0. $
The last inequality holds when $t$ is sufficiently large and plugging in $d=\beta t\sqrt{\ln t}$.
\end{proof}

In the following, we will always let $r_0=4t^{1/2}{\log_2}^{1/4} t$. 
Next, we prove the fact that $s_M(r,r_l)$ is relatively large when $r$ is large, which is the first statement in Lemma \ref{lem:key1}.

\begin{lem} \label{lem: half branch}
   {For any $r$ and for any every $k$-clique $K$ with exactly $r$ peeling steps, we have $s(K) \ge  \frac{r-2}{3}-6\log_t d_{2} $ where $d_2$ is determined by $K$. As a consequence, when $t$ is sufficiently large, for any $r$, \[\min_{r_l< r} s_M(r,r_l) \ge \frac{r}{3}-7\log_2 t.\]}
\end{lem}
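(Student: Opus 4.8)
Given a $k$-clique $K$ with $r(K)=r$, the plan is to exhibit an explicit branch vertex set $\mathcal{A}$ with $|\mathcal{A}|\ge \frac{r-2}{3}-6\log_t d_2$, so that $s(K)\ge|\mathcal{A}|$ by definition; the ``consequence'' will then fall out of a crude estimate. The raw material is the sequence of layers $L_i=D_i\cup Y_i$ for $1\le i\le r-1$. By construction these sets are pairwise disjoint, disjoint from $V(K)$, and disjoint from $V(G_r)$, so any union of layers is eligible to be an extra branch disk. Three consequences of Claim~\ref{basic facts} drive everything. (i) By Fact~4, $V(G_i)\subseteq N_G(v_1)\cap\cdots\cap N_G(v_{i-1})$, hence every vertex lying in any $L_j$ with $j\ge i$ is adjacent to all of $v_1,\dots,v_{i-1}$. (ii) The vertex $v_i$ is adjacent to every vertex of $Y_i$ and, again by Fact~4, to every vertex of $V(G_{i+1})\supseteq L_{i+1}$. (iii) By Fact~5, any $A\subseteq V(G_i)$ with $|A|\ge d_i+1$ dominates $V(G_i)$ (each vertex of $V(G_i)$ lies in $A$ or has a neighbor in $A$), and $G[A]$ is connected as soon as $|A|\ge 2d_i+1$.

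From these I would extract a clean criterion: \emph{if $2\le a\le b\le r-1$ and $A:=L_a\cup\cdots\cup L_b$ satisfies $|A|\ge 2d_a+1$, then $A$ is an extra branch disk of $K$.} Indeed $A\subseteq V(G_a)$, so $G[A]$ is connected by (iii); since $|A|\ge d_a+1$, every vertex of $V(G_{a+1})$ — in particular $V(G_r)$ and each $v_j$ with $j\ge b$, none of which meet $A$ — has a neighbor in $A$ by (iii); for $a\le j<b$ the vertex $v_j$ has its neighbor in $L_{j+1}\subseteq A$ by (ii); and $v_1,\dots,v_{a-1}$ are adjacent to all of $A$ by (i). Two disks obtained this way from disjoint intervals $[a,b],[a',b']$ with $b<a'$ are automatically adjacent, since $A'\subseteq V(G_{a'})\subseteq V(G_{a+1})$ and the first disk dominates $V(G_{a+1})$ by (iii). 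Thus the whole problem reduces to the following combinatorial task: \emph{partition $\{2,\dots,r-1\}$, up to a small remainder, into $N\ge\frac{r-2}{3}-6\log_t d_2$ intervals $[a,b]$ each with $|L_a|+\cdots+|L_b|\ge 2d_a+1$}; one then takes the associated disks as $\mathcal{A}$, so $s(K)\ge N$.

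This partition is the crux. The controlling structure is Fact~\ref{ni' gap}: with $n_i'=n_i+i-t$, the sequence $(n_i')$ is strictly decreasing on $i<r$, with $n_i'-n_{i+1}'=|L_i|\ge d_i>\tfrac12(n_i')^{1/2}$; moreover, since step $2$ fails the second stopping condition (for $r\le 2$ the claimed bound is nonpositive and there is nothing to prove), $n_2'<4d_2^2$, so the total layer mass $\sum_{i=2}^{r-1}|L_i|=n_2'-n_r'$ is at most $\min(d,4d_2^2)$. Because $n_i'$ drops by at least $\tfrac12(n_i')^{1/2}$ at each interior step, at ``most'' steps $|L_i|$ is of order $(n_i')^{1/2}$ and three consecutive layers already clear the connectivity threshold $2d_a+1$, giving roughly one disk per three steps. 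The remaining ``heavy'' steps, where $d_i$ is much larger than $(n_i')^{1/2}$ (so that $n_i'$ drops by a large factor), are where a fixed-length block can fail and must be enlarged; these force a dyadic bookkeeping on the descent of $(n_i')$ — there are only $O(\log d_2)$ scales between $n_2'$ and $n_r'$ — and an appropriate greedy grouping, closing an interval each time the accumulated mass reaches $2d_a+1$, then yields the partition with total loss absorbed into the additive term $6\log_t d_2$. Carefully tracking the constants gives $N\ge\frac{r-2}{3}-6\log_t d_2$, which is the first assertion.

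Finally, the ``consequence'' is immediate: $\min_{r_l<r}s_M(r,r_l)=\min\{s(K):r(K)=r\}\ge\frac{r-2}{3}-6\log_t d_2$, and since every cliq\-ue has $d_2\le d+1=\beta t\sqrt{\ln t}+1$, for $t$ large enough $\log_t d_2\le 2$, whence $\frac{r-2}{3}-6\log_t d_2\ge\frac r3-7\log_2 t$. I expect the main obstacle to be exactly the interval partition: each disk must simultaneously be connected (the $2d_a+1$ threshold, which forces enough layer mass into the interval precisely where the missing degrees $d_i$ fluctuate) and dominate \emph{all} of $V(K)\cup V(G_r)$ (the tail vertices $v_b,v_{b+1},\dots$ and $V(G_r)$, handled through Fact~5 applied at the head of the interval), and keeping both requirements compatible across every possible descent profile of $(n_i')$ is what necessitates the dyadic analysis and produces the $O(\log_t d_2)$ correction.
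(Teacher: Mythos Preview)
Your setup is essentially the paper's: you correctly reduce the problem to building many disjoint extra branch disks out of blocks of consecutive layers, your criterion ``$A=L_a\cup\cdots\cup L_b$ with $|A|\ge 2d_a+1$ is an extra branch disk'' is right (the paper uses the smaller sets $D_a\cup D_{a+1}\cup D_{a+2}$, but that is immaterial), and your adjacency argument between disks is fine.

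The gap is in the partition step. The greedy rule ``close the interval as soon as the accumulated mass reaches $2d_a+1$'' does \emph{not} give $\frac{r-2}{3}-O(\log d_2)$ blocks, and the heuristic you offer (tracking $n_i'$ and arguing that at most steps $|L_i|$ is of order $(n_i')^{1/2}$) controls the wrong quantity: the threshold is governed by $d_a$, which can be far larger than $(n_a')^{1/2}$. Concretely, take a peeling profile with $n_2'=2m$, $d_2=m$, $|L_2|=m$, and then $d_i\approx\sqrt m$, $|L_i|\approx\sqrt m$ for $i\ge 3$ (this is compatible with all the stopping constraints and with $d_i$ being non-increasing). Then $r\approx\sqrt m$, so one needs about $\sqrt m/3$ disks. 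But your greedy opens its first interval at $a=2$ with threshold $2m+1$; since $|L_2|=m$ and each subsequent layer contributes only about $\sqrt m$, the first interval must absorb roughly $\sqrt m$ layers before closing, i.e.\ essentially all of them, and you get a single disk. Dyadic bookkeeping on $n_i'$ does not rescue this: the bad layer $L_2$ sits alone in its $n'$-scale, and one still cannot build a disk there without raiding $\Theta(\sqrt m)$ layers from below.

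What the paper does instead is to bracket by geometric scales of $d_i$, not of $n_i'$. One first observes (implicitly, via Fact~1 and the deletion rules) that the sequence $(d_i)$ is non-increasing. Define $i_1=2$ and $i_{j+1}$ as the least index with $d_{i_{j+1}}\le\tfrac78 d_{i_j}$; this partitions $\{L_2,\dots,L_{r-1}\}$ into at most $\log_{8/7}d_2<6\log_2 d_2$ brackets. Inside a bracket every three consecutive layers $L_a,L_{a+1},L_{a+2}$ satisfy $d_{a+1},d_{a+2}\ge\tfrac78 d_a$, hence $|D_a\cup D_{a+1}\cup D_{a+2}|\ge d_a+2\cdot\tfrac78 d_a>2d_a+1$, so one disk per three layers is guaranteed. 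Summing $\lfloor(i_{j+1}-i_j)/3\rfloor$ over brackets gives $\tfrac{r-2}{3}-6\log_2 d_2$ disks, and the consequence follows. The missing ingredient in your sketch is precisely this monotonicity of $d_i$ and the decision to bracket on $d_i$ rather than on $n_i'$.
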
 

\begin{proof}
    For any $k$-clique $K$ with exactly $r$ peeling steps, we have defined $L_i$ and $D_i$ for every $i\le r$ by its peeling process. We show that for every three consecutive layers $L_a,L_{a+1},L_{a+2} $ such that $d_{a+2}\ge \frac{7}{8} d_a$, we can construct an extra branch disk in these three layers.  Notice that, by Fact 5 in Claim \ref{basic facts}, every vertex set in $G_a$ with at least $2d_a+1$ vertices induces a connected subgraph.

    Let $A_a=D_a\cup D_{a+1}\cup D_{a+2}$, so $|A_a|\ge d_a+2\cdot \frac{7}{8} d_a\ge 2d_a+1$. By Fact 5 in Claim \ref{basic facts}, $G[A_a]$ is connected and every vertex in $G_a$ is adjacent to a vertex in $A_a$. Thus, we have $A_a$ is adjacent to every vertex in $V(G_r)$ and $V(K)-\{v_1,v_2,\cdots,v_{a-1}\}$. By Fact 2 in Claim \ref{basic facts}, every vertex in $A_a$ is adjacent to $v_a$ for every $i\in [a-1]$. Thus, $A_a$ is an extra branch disk.

     Set $i_1=2$, recursively define $i_{j+1}$ as the smallest integer such that $d_{i_{j+1}}\le \frac{7}{8}d_{i_j}$. Then we can partition set of all layers except $L_1$ and $L_r$ into brackets of consecutive layers with brackets $P_j=\{ L_{i_j},L_{i_j+1},\dots,L_{i_{j+1}-1} \}$. Thus, there are at most $\log_{\frac{8}{7}}d_2< 6\log_2 d_2$ brackets. Assume there are $l$ brackets and let $d_{i_{l+1}}=r$ for convenience. For any three consecutive layers $L_a,L_{a+1},L_{a+2} $ in the bracket $P_j=\{ L_{i_j},L_{i_j+1},\dots,L_{i_{j+1}-1} \}$, we can construct a branch vertex $A_a$, so we can construct $\lfloor \frac{i_{j+1}-i_j}{3} \rfloor$ branch vertices $A_{i_j}, A_{i_j+3},A_{i_j+6},\dots$ in this bracket $P_j$. In total, we construct at least
    \[
    \sum_{j=1}^{l} \lfloor \frac{i_{j+1}-i_j}{3} \rfloor \ge \sum_{j=1}^{l} \left( \frac{i_{j+1}-i_j}{3} -1 \right) \ge
    \left( \sum_{j=1}^{l} \frac{i_{j+1}-i_j}{3} \right) - l\ge 
    \frac{r-2}{3}-6\log_t d_{2}
    \]
disjoint branch vertices. These extra branch disks are pairwise adjacent which means they form a branch vertex set. When $t$ is sufficiently large, we have $d_2\le d \le \beta t\sqrt{\ln t}$ and have $s(K) \ge \frac{r}{3}-7\log_2 t$ for every $k$-clique $K$.

\end{proof}

\begin{rem}
    With more effort, we can show that, for every $k$-clique $K$ with exactly $r$ peeling steps,  $s(K) \ge \frac{r}{2}-O(\log t)$ when $t$ is sufficiently large. However, Lemma \ref{lem: half branch} is good enough to prove the main result Theorem \ref{thm: summary} for very large $k$, which is the Theorem \ref{thm:klarge0}. 
\end{rem}

When $r_l$ is large, i.e., there are many layers with large $Y_i$, we can expect to find more branch vertices. Now we will prove the second statement of Lemma \ref{lem:key1} in Lemma \ref{lem:nonempty Y}, which is more technical than the proof of Lemma \ref{lem: half branch}.

The rough idea is as follows. Suppose we have a sequence of layers whose $D_i$ do not differ by much in sizes, then we will first try to construct the extra branch disk in the topmost layer with non-empty $Y_i$. Recall that $y_i$ is the last removed vertex in $Y_i$. If the subgraph induced by $D_i\cup \{y_i\}$ is connected, by Fact 5 in Claim \ref{basic facts}, it can be contracted as an extra branch disk that is adjacent to every vertex in $G_i$ as $|V(D_i\cup \{y_i\})|= d_i+1$. If not, we will try to use some vertices in lower layers to connect the different connected components of $D_i \cup \{y_i\}$, and make all these vertices an extra branch disk (Claim \ref{claim: branch vtx2}). We will try to construct the other extra branch disks greedily. Suppose in layer $j$, the set $U$ is the set of vertices that have not been used. Then Claim \ref{claim: branch vtx2} 
 will show either $U$ itself could be an extra branch disk, or we could add to $U$ a small set of vertices from lower layers so that this set together with $U$ is a valid extra branch disk. We will show that by first preprocessing the layers properly, this greedy construction process will work for most of the layers (Claim \ref{claim: branch process}).

\begin{claim} \label{claim: branch vtx2}
    Fix $\epsilon\in (0,\frac{1}{6})$. Suppose $L_i$ is a layer with nonempty $Y_i$. Let $y=y_i$ be the last removed vertex in $ Y_i$ in the peeling process. For every $U\subseteq D_i$ such that $|U|\ge 2\epsilon d_i$ and $d_{i+1}\ge (1-\epsilon)d_i$,  and for every $X\subseteq G_{i+1}$ such that $|X|\ge 3.5 d_i$, there exists $W\subseteq X$ that the subgraph induced by $U\cup W\cup \{y\}$ is connected, and $d_i\le |U\cup W|\le d_i + 
    {2\log_\frac{1}{2\epsilon} d_i}$.
\end{claim}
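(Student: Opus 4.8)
\textbf{Proof proposal for Claim \ref{claim: branch vtx2}.}

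The plan is to build the connected set $U \cup W \cup \{y\}$ in two stages: first use the hypothesis $|X| \ge 3.5 d_i$ together with Fact 5 of Claim \ref{basic facts} to glue $y$ onto $U$ through $G_{i+1}$, and then, if $U$ is still disconnected, use a doubling argument inside $X$ to merge the remaining components one pass at a time. First I would record the two consequences of Fact 5 that drive everything: any subset of $V(G_i)$ of size at least $2d_i+1$ induces a connected subgraph, and any vertex of $V(G_i)$ either lies in, or is adjacent to, any fixed set of size at least $d_i+1$. The subtlety is that $U$ itself may be as small as $2\epsilon d_i \ll d_i$, so $U$ need not be connected and need not dominate; this is exactly why we are allowed to enlarge $U$ by up to $2\log_{1/(2\epsilon)} d_i$ vertices.

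The key steps, in order. (i) First note $y = y_i$ has, by Fact 7 of Claim \ref{basic facts}, missing degree at most $d_i - d_{i+1} \le \epsilon d_i$ inside $D_i$, hence at least $|U| - \epsilon d_i \ge \epsilon d_i > 0$ neighbors in $U$; so $y$ attaches to at least one vertex of $U$. (ii) Now I would bound the number of connected components of $G[U]$. Each vertex of $U \subseteq D_i \subseteq V(G_i)$ has missing degree $\le d_i$ in $G_i$, so a component of size $c$ within $U$ forces each of its vertices to miss at least $|U| - c$ vertices of $U$; hence $|U| - c \le d_i$, i.e. every component has size $\ge |U| - d_i$. If $|U| > d_i$ this already means $G[U]$ is connected and we are essentially done after adding $y$; the interesting regime is $2\epsilon d_i \le |U| \le d_i$, where this crude bound is vacuous, so I instead argue by iterated merging. (iii) The doubling step: suppose at some stage we have a partial set $U' \supseteq U$ with $|U'| \le d_i + (\text{budget used so far})$, and $G[U']$ has components $C_1, \dots, C_m$ with $m \ge 2$. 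Pick the two smallest, $C_1, C_2$; since $|C_1|, |C_2| \le |U'|/2$ and each vertex has missing degree $\le d_i$ in $G_i$, a vertex of $C_1$ has at least $|X| - |C_1| - d_i \ge 3.5 d_i - |U'|/2 - d_i$ neighbors in $X \setminus U'$, which is positive as long as $|U'| \le d_i$-ish; so I can find a single vertex $w \in X$ adjacent to $C_1$, and likewise chase a short path through $X$ from $C_1$ to $C_2$. Concretely, because $|X| \ge 3.5 d_i \ge 2d_i + 1$, the set $X \cup \{w\}$ has a connected superstructure, and a vertex of $X$ sees all but $\le d_i$ of $X$; so any two components can be joined by a path in $X$ of bounded length — in fact length $O(1)$, but to be safe I allow the halving bookkeeping to run $\log_{1/(2\epsilon)} d_i$ times. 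Each merge reduces the component count; after $\le \log_2 d_i$ merges we are connected, but matching the stated bound $2\log_{1/(2\epsilon)} d_i$ requires the sharper observation that the number of components of $G[U]$ is itself at most $\lceil |U| / (|U| - d_i + \cdot) \rceil$-type small, so only $O(\log_{1/(2\epsilon)} d_i)$ components ever arise — this is where the exponent $1/(2\epsilon)$ rather than $2$ comes from, and it should follow by combining $|U| \ge 2\epsilon d_i$ with the component-size lower bound applied to the smallest component at each stage (each merge at least roughly multiplies the smallest-component size by $1/(2\epsilon)$). (iv) Finally check the size bounds: we started with $|U| \ge 2\epsilon d_i$, we must reach $|U \cup W| \ge d_i$, and we add at most $1$ vertex of $X$ per merge (plus $y$, which is not in $X$), so $|W| \le 2\log_{1/(2\epsilon)} d_i$ gives both $|U \cup W| \ge d_i$ (pad with extra vertices of $X$ if the merges alone did not reach $d_i$ — legitimate since $|X| \ge 3.5 d_i$ and adding a vertex adjacent to the current connected blob keeps it connected) and $|U \cup W| \le d_i + 2\log_{1/(2\epsilon)} d_i$. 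Connectivity of the whole $U \cup W \cup \{y\}$ then holds because every piece was attached along an edge when it was introduced.

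The main obstacle I expect is step (iii): getting the bound on the number of components sharp enough to land the logarithm base at $1/(2\epsilon)$ rather than a constant base like $2$, and simultaneously controlling how many auxiliary vertices each merge consumes. The honest picture is that after introducing $y$ and a few connecting vertices, the blob quickly exceeds size $d_i$ and then Fact 5 makes everything connected in one shot, so the real work is purely in the small window $2\epsilon d_i \le |U| \le d_i$; I would isolate that window, run the greedy merge there, and invoke the ``$|U'| > d_i \Rightarrow$ connected'' observation as soon as the budget pushes $|U'|$ past $d_i$. The requirement $|X| \ge 3.5 d_i$ (as opposed to, say, $2d_i+1$) is exactly the slack needed so that each merge can find its connecting vertex in $X$ even after we have already spent up to $d_i$ vertices of the blob and set aside room for padding. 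Everywhere I use only Claim \ref{basic facts}, Facts 1, 5, 7, and elementary counting.
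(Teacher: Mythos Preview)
There is a genuine gap in step (iii), and it is not repairable along the lines you sketch. The set $U$ can be an independent set: $|U|=2\epsilon d_i$ together with every vertex of $U$ having missing degree $\le d_i$ in $G_i$ is perfectly compatible with $G[U]$ having $2\epsilon d_i$ singleton components. Your pairwise-merging scheme then needs $\Theta(\epsilon d_i)$ merges, each costing at least one auxiliary vertex of $X$, which blows the budget of $2\log_{1/(2\epsilon)} d_i$ by a polynomial factor. The assertion that ``each merge roughly multiplies the smallest-component size by $1/(2\epsilon)$'' is unsupported: merging $C_1,C_2$ through a connecting vertex yields a component of size $|C_1|+|C_2|+O(1)$, and nothing in your setup ties this to $1/(2\epsilon)$. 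Your component-size lower bound $|C|\ge |U|-d_i$ is, as you note, vacuous in the only regime that matters.

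The paper's argument is structurally different and does not merge components pairwise. It first separates $U$ into the part already reachable from $y$ inside $L_i$ (call its component $L$) and the remainder $R=U\setminus L$, and uses Fact~7 to get $|R|\le d_i-d_{i+1}\le\epsilon d_i$. The key observation your proposal never reaches is that every vertex $v$ in a component $R_j$ of $G[R]$ is non-adjacent to all of $L$, so $v$ has already spent at least $|L|$ of its $d_i$ non-neighbours on $L$; consequently $v$ has at most $d_i-|L|\le |R|+|D_i\setminus U|\le 4\epsilon d_i$ non-neighbours inside $X':=N(y)\cap X$, while $|X'|\ge |X|-d_i\ge 2.5d_i$. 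Hence a single uniformly random vertex of $X'$ misses a fixed vertex of $R_j$ with probability $<2\epsilon$, and $\lfloor 2\log_{1/(2\epsilon)} d_i\rfloor$ independent samples hit every $R_j$ with positive probability by a union bound over $l\le \epsilon d_i$ components. All sampled vertices lie in $N(y)$, so connectivity through $y$ is automatic, and padding up to size $d_i$ is done inside $X'$ for the same reason. The base $1/(2\epsilon)$ thus comes from this density estimate in $X'$, not from any growth rate of component sizes; without exploiting the structure of $y$, $L$, and $X'$ in this way, the logarithmic bound is out of reach.
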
 

\begin{proof}
     By Fact 7 in Claim \ref{basic facts}, the missing degree of $y$ in $D_i$ is at most $d_i-d_{i+1}\le \epsilon d_i$, so  $|N(y)\cap U|\ge |U| - \epsilon d_i \geq  2\epsilon d_i -\epsilon d_i=\epsilon d_i$ which means $N(y)\cap U$ is non-empty. Let $L$ be the connected component in $L_i$ containing $y$ and $ N(y)\cap U$ (clearly $y$ is adjacent to every vertex in  $N(y)\cap U$). Let $R=U-L$ 
    and $R_1,R_2,\dots,R_l$ be the connected components in the graph induced by $R$. Because $R$ and $y$ are not adjacent, $|R|$ is upper bounded by the missing degree of $y$ in $D_i$. In other words, \begin{equation}
        |R|\le d_i-d_{i+1} 
    \le d_i-(1-\epsilon)d_{i}=\epsilon d_i. \label{eq:sizeR}
    \end{equation} 
     Let $X'=N(y)\cap X$ and $O=D_i-U$. An illustration is shown in Figure \ref{fig:figure2}.

\begin{figure}[htb]
\begin{center}
\includegraphics[width=8cm]{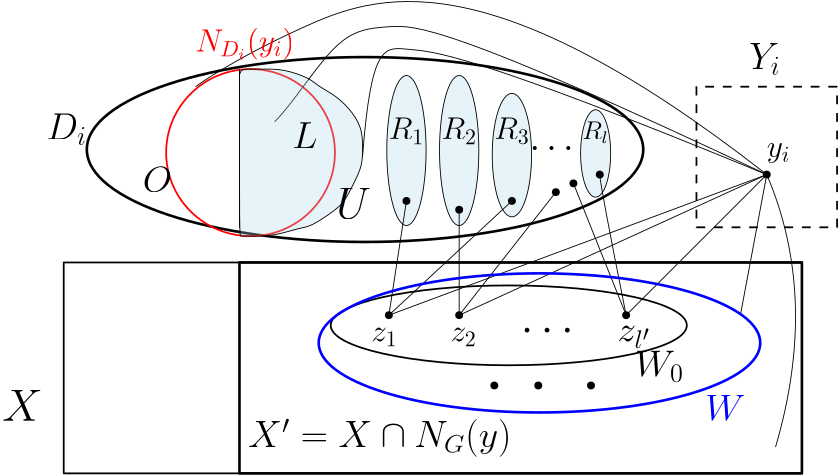}
\end{center}
\caption{Construct an extra branch disk for layer $L_i$}
\label{fig:figure2}
\end{figure}

    Our goal is to find a vertex set $W$ in $X'$ to connect $L$ and the $l$ connected components of $R$. Since $y$ has at most $d_i$ missing edges in $G_i$, we have $|X'|\ge |X| -d_i\ge 2.5d_i$.
    To find the proper $W$, we will first try to find a set $W_0\subseteq X'$ such that $\{y\}\cup U \cup W_0$ is connected and $|W_0|\le |O|+\log_\frac{1}{2\epsilon} d_i$. If $|W_0|\ge |O|$, then we let $W=W_0$; if $|W_0|\le |O|$, we will add $|O|-|W_0|$ vertices from $X'\setminus W_0$ to $W_0$ to construct $W$. The proof falls into the following two cases:

\paragraph{Case 1: $|O|\ge l$.} Recall $l$ is the number of connected components in the graph induced by $R$. \\
We will construct $W_0$ by picking one vertex $z_i$ in $X'$ for each $R_i$ where $i\in[l]$. Here the $z_i$'s do not need to be distinct.
For any vertex $v\in R_i\subseteq G_i$, it has at most $d_i$ non-neighbors in $G_i$. Thus, this vertex $v$ has at least one neighbor in $X'$ as $X'\subseteq G_i$ and $|X'|\ge 2.5d_i$. Then we select one of these neighbors arbitrarily as $z_i$. Let $W_0=\{z_1,z_2,\dots,z_l\}$.

Because $|W_0|\le l \le |O|$, we have $|U\cup W_0|\le |U|+|O|=|D_i|=d_i$. To construct a proper $W$, we need to add $|O|-|W_0|$ vertices from $X'\setminus W_0$ to $W_0=\{z_1,z_2,\dots,z_l\}$ to form the set $W$. We can do this addition because $|X'\setminus W_0|\ge |X'|-|O|\ge  2.5d_i  - d_i > d_i \ge |O|-|W_0|$. 
Thus we have that the subgraph induced by $U\cup W\cup \{y\}$ is connected as every vertex in $W\subset X'$ is adjacent to $y$, and  $z_i \in W_0$ is connected to $y$ and the connected component $R_i$. The size satisfies $|U\cup W|= (d_i -|O|)+|W_0|+(|O|-|W_0|)=d_i$.

\paragraph{Case 2: $|O|\le l$.}
Because $l$ is the number of connected components in the graph induced by $R$, we have $|R|\ge l$. Thus, $|O|\le l \le |R|\le 2 \epsilon d_i$ by (\ref{eq:sizeR}). 
For every $j\in[l]$, for any vertex $v\in R_j$, $v$ has at least $|L|$ nonneighbors in $L_i$ because $R_j$ is disconnected from $L$.  So $v$ has at most $d_i-|L|= |R|+|O|\le 4\epsilon d_i$ non-neighbors in $X'$ as we just showed $|O|\le  |R|\le 2 \epsilon d_i$. 

Let $l'=\lfloor 2\log_\frac{1}{2\epsilon} d_i \rfloor$. We now find a set $W_0 \subset X'$ with size $l'$ such that $\{y\}\cup U \cup W_0$ is connected. To do this, we pick $l'$ vertices independently uniformly at random from $X'$, and let them be the set $W_0$. 
The event $|W_0|<l'$ happens when some vertex was selected more than once. By a union bound, 
$
\text{Pr}(|W_0| < l') \leq \binom{l'}{2}\cdot \frac{|X'|}{|X'|^2}<\frac{(l')^2}{2|X'|}\le \frac{(l')^2}{5d_i}.
$

When $|W_0|=l'$, for every $j\in[l]$, the probability that there is no edge between $W_0$ and $R_j$ is at most
$
\left( \frac{4\epsilon d_i}{|X'|}\right)^{l'}\le
\left( \frac{4\epsilon d_i}{2.5d_i}\right)^{l'}=
\left( \frac{8\epsilon }{5}\right)^{l'} < (2\epsilon)^{l'}.
$ 
By a union bound, the probability that $|W_0|<l' $ or $|W_0|=l'$ but 
there exists a $j\in [l]$ such that $W_0$ and $R_j$ is not adjacent is at most
\[
\frac{(l')^2}{5d_i}+l\cdot (2\epsilon)^{l'}\le 
\frac{(l')^2}{5d_i}+2\epsilon d_i \cdot (2\epsilon)^{l'}\le \frac{(l')^2}{5d_i}+2\epsilon d_i \cdot (2\epsilon)^{(2\log_\frac{1}{2\epsilon} d_i) -1}  =\frac{(2\log_\frac{1}{2\epsilon} d_i)^2}{5d_i}+\frac{1}{d_i} < 1
\]
The second inequality holds because  $l'=\lfloor 2\log_\frac{1}{2\epsilon} d_i \rfloor \ge 2\log_\frac{1}{2\epsilon} d_i -1$. The last inequality is true since $1/(2\epsilon) \ge 3$ as $\epsilon\in (0,\frac{1}{6}]$.  
The union bound showed that there is a subset $W_0 \subseteq X'$ with $l'=\lfloor 2\log_\frac{1}{2\epsilon} d_i \rfloor$ vertices such that every $R_j$ in $R$ has at least one neighbor in $W_0$. Thus, together with the fact that every vertex in $X'$ is adjacent to $y$, we have that $\{y\}\cup U \cup W_0$ is connected.

We now construct a set $W$ with the desired size. If $|O|\le l'$, let $W=W_0$; if $|O|> l'$, then add $|O|-l'$ vertices from $X'$ to $W_0$, and let this new set be $W$. We can do this because 
$
|X'\setminus W_0|\ge |X'|-|O|\ge  2.5d_i  - d_i >  d_i=|D_i| \geq |O| > |O|-l'.
$
The first inequality is because $|W_0|= l'\leq  |O|$, and the second inequality is because $|X'|\ge 2.5d_i$ and $O\subseteq D_i$. 
By the definition of $X'$, every vertex in $X'$ is adjacent to $y$. By the fact that $W\subset X'$ and that we have just shown $\{y\} \cup U \cup W_0$ is connected, we have that $U\cup W\cup \{y\}$ is connected. For the size of $U\cup W$, we have $|U\cup W|=d_i-|O|+\max\{|O|,l'\}$. Therefore $d_i\le |U\cup W|\le d_i+l' \le  d_i+2\log_{1/2\epsilon} d_i$.
\end{proof}

In the next claim, we apply Claim \ref{claim: branch vtx2} to consecutive layers to construct extra branch disks.  To apply Claim \ref{claim: branch vtx2}, we need to cut all layers into brackets such that $d_i/d_j$ is close to $1$ for any two layers $L_i,L_j$ in the same bracket. 
For a  fixed $\epsilon$, we set $p_j=(1-\epsilon)^jd_2$ for every $j\ge 0$, and then let $P_j$ be the set of layers $L_i$ such that $d_i\in (p_j,p_{j-1}]$. 
Because $d_i$ is non-increasing as $i$ increases, we partition the set of all layers (except $L_1$ and $L_r$) into brackets of consecutive layers $P_j=\{ L_{i_j},L_{i_j+1},\dots,L_{i_{j+1}-1} \}$. For any $j$, for any $L_a,L_b\in P_j$, we have 
\[
d_a> (1-\epsilon)^j d_2\ge (1-\epsilon)(1-\epsilon)^{j-1}d_2\ge (1-\epsilon)d_b. 
\]
Also, there are at most $\log_{\frac{1}{1-\epsilon}}d_2$ brackets. We will try to create branch vertices from vertices in the same bracket.  
\begin{claim} \label{claim: branch process}
  For any fixed $\epsilon\in (0,\frac{1}{6}]$, for any $T$ layers  $\{L_{a},L_{a+1},\dots,L_{a+T-1}\}$ in the same bracket where $T\le \frac{d_a}{4\log_{\frac{1}{2\epsilon}}d_a}$ and $Y_{a+l}\neq \emptyset$ for every $l\in [0,T-8]$, 
  we can create $T-7$ disjoint extra branch disks $\{A_a,A_{a+1},\dots, A_{a+T-8}\}$ which are disjoint with $V(K)$, such that $A_i\subseteq V(G_i)$ and $ 1+d_i \le |A_i| \le 1+d_i+\log_{\frac{1}{2\epsilon}}d_i$, and $D_i\subseteq A_a\cup A_{a+1}\cup \cdots \cup A_{i}$ for every $i\in[a,a+T-8]$.   
\end{claim}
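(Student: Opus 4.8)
The plan is to build the disks one layer at a time, in increasing order of index, by repeatedly invoking Claim~\ref{claim: branch vtx2}. Suppose $A_a,\dots,A_{i-1}$ have already been constructed so that they are pairwise disjoint, disjoint from $V(K)\cup V(G_r)$ and from the special vertices $y_{i},\dots,y_{a+T-8}$ defined below, and satisfy $D_j\subseteq A_a\cup\cdots\cup A_j$ for every $j<i$. Set $U_i:=D_i\setminus(A_a\cup\cdots\cup A_{i-1})$, let $y_i$ be the last vertex removed from $Y_i$ (nonempty for $i\le a+T-8$ by hypothesis), and let $X_i$ consist of the vertices of $G_{i+1}$ avoiding $A_a\cup\cdots\cup A_{i-1}$, $V(K)$, $V(G_r)$ and $y_{i+1},\dots,y_{a+T-8}$. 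Provided the three hypotheses of Claim~\ref{claim: branch vtx2} hold, i.e.\ $|U_i|\ge 2\epsilon d_i$, $d_{i+1}\ge(1-\epsilon)d_i$, and $|X_i|\ge 3.5\,d_i$, the claim produces $W_i\subseteq X_i$ with $U_i\cup W_i\cup\{y_i\}$ connected and $d_i\le|U_i\cup W_i|\le d_i+2\log_{1/(2\epsilon)}d_i$; put $A_i:=U_i\cup W_i\cup\{y_i\}$. Then $A_i\subseteq V(G_i)$ and $|A_i|\ge d_i+1$, so by Fact~5 in Claim~\ref{basic facts} every vertex of $V(G_i)$—hence of $V(G_r)$ and of the surviving clique vertices—has a neighbour in $A_i$, while by Fact~4 in Claim~\ref{basic facts} we have $A_i\subseteq N_G(v_1)\cap\cdots\cap N_G(v_{i-1})$, so $A_i$ is adjacent to $v_1,\dots,v_{i-1}$ as well; combined with connectedness and $A_i\cap(V(K)\cup V(G_r))=\emptyset$ this makes $A_i$ an extra branch disk disjoint from all earlier ones. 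Finally $D_i\setminus U_i\subseteq A_a\cup\cdots\cup A_{i-1}$ by construction, so the invariant $D_i\subseteq A_a\cup\cdots\cup A_i$ persists, and $|A_i|=|U_i\cup W_i|+1$ lies in the stated range.

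So everything comes down to verifying the three hypotheses of Claim~\ref{claim: branch vtx2} at each step $i\in[a,a+T-8]$. The middle one, $d_{i+1}\ge(1-\epsilon)d_i$, holds because $L_i$ and $L_{i+1}$ lie in the same bracket. For $|U_i|\ge 2\epsilon d_i$ I would track $m_i:=|(A_a\cup\cdots\cup A_{i-1})\cap V(G_i)|$, the number of vertices ``already spent'' in $G_i$. Since $U_j\subseteq D_j$ and $y_j\in Y_j$ are deleted on the way from $G_j$ to $G_{j+1}$, these spent vertices all come from the sets $W_j$; moreover passing from $G_i$ to $G_{i+1}$ removes the layer $L_i\supseteq D_i$, which already contains $u_i:=|D_i\cap(A_a\cup\cdots\cup A_{i-1})|=d_i-|U_i|$ spent vertices, while $A_i$ contributes at most $|W_i|\le(d_i+2\log_{1/(2\epsilon)}d_i)-|U_i|=u_i+2\log_{1/(2\epsilon)}d_i$ new ones. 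Hence $m_{i+1}\le m_i-u_i+|W_i|\le m_i+2\log_{1/(2\epsilon)}d_a$. Starting from $m_a=0$ and iterating fewer than $T$ times gives $m_i\le 2T\log_{1/(2\epsilon)}d_a\le d_a/2$ by the hypothesis $T\le d_a/(4\log_{1/(2\epsilon)}d_a)$, and therefore $|U_i|=d_i-u_i\ge d_i-m_i\ge(1-\epsilon)d_a-d_a/2\ge d_a/3\ge 2\epsilon d_i$, the last two steps using $\epsilon\le 1/6$ and $d_i\le d_a$.

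For the third hypothesis I would not bound $n_{i+1}$ and $n_r$ separately but rather their difference. Writing $n'_j=n_j+j-t$ and telescoping $n'_j-n'_{j+1}=|L_j|=d_j+|Y_j|$ from $j=i+1$ to $r-1$, and noting that $V(G_r)$ and $V(K)\cap V(G_{i+1})$ overlap in exactly the $k-r$ clique vertices of $G_r$, one finds $|X_i|\ge n_{i+1}-\bigl(n_r+(r-i)\bigr)-m_{i+1}-T=\sum_{j=i+1}^{r-1}(d_j+|Y_j|)-1-m_{i+1}-T$. The layers $L_{i+1},\dots,L_{a+T-1}$ all lie in the bracket, so each $d_j>(1-\epsilon)d_a\ge 1$, and there are at least $7$ of them when $i\le a+T-8$; hence $|X_i|\ge 7(1-\epsilon)d_a-1-d_a/2-T>3.5\,d_a\ge 3.5\,d_i$ once $d_a$ is not tiny. (These layers exist and carry $d_j\ge 1$ precisely because the peeling process has not yet terminated at indices $<r$.)

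The main obstacle is the simultaneous bookkeeping in the last two paragraphs: one must keep the count $m_i$ of recycled vertices below $\tfrac12 d_a$ and at the same time guarantee that $G_{i+1}$ still carries $\gtrsim 3.5\,d_i$ vertices outside $V(K)$, $V(G_r)$ and the disks already built, and it is exactly the balance between these two demands that forces the hypothesis $T\le d_a/(4\log_{1/(2\epsilon)}d_a)$ and the slack of eight layers (the ``$7$'' in the last estimate). The remaining points—that $y_i\notin A_a\cup\cdots\cup A_{i-1}$ (ensured by pre-excluding the at most $T$ future $y_j$'s from every $X_j$) and that $A_i\cap V(K)=\emptyset$ because $D_i$ and $Y_i$ are disjoint from $K$—are routine.
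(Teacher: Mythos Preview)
Your proof is correct and follows essentially the same recursive strategy as the paper: build $A_i$ one layer at a time via Claim~\ref{claim: branch vtx2}, tracking the number $m_i$ of already-used vertices in $G_i$ and showing $m_i\le d_a/2$ from the hypothesis $T\le d_a/(4\log_{1/(2\epsilon)}d_a)$. The one point of divergence is your choice of the reservoir $X_i$: the paper takes $X$ to be the union of the remaining layers $L_{a+l+1}\cup\cdots\cup L_{a+T-1}$ within the bracket (minus used vertices and the $y_j$'s), which automatically avoids $V(K)$ and $V(G_r)$ since every layer is disjoint from both, whereas you take all of $G_{i+1}$ minus $V(K)$, $V(G_r)$, and the used vertices, which forces you to control $n_{i+1}-n_r$ via the telescoping sum. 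Both routes give the same numerics; the paper's is a little cleaner because the disjointness from $V(K)\cup V(G_r)$ comes for free. (Your caveat ``once $d_a$ is not tiny'' is harmless: for $T\le 7$ the claim is vacuous, and for $T\ge 8$ the hypothesis on $T$ already forces $d_a$ large.)
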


{
\begin{rem}
    The proof does not require these $T$ layers to be consecutive in the original peeling process. For any $T$ layers in the same bracket, if we relabel indices of them by $\{a,a+1,\cdots,a+T-1\}$ based on their original order, then the same result follows. 
\end{rem}}

\begin{proof}

    For $T\le 7$, this claim is trivially true. Assume $T\ge 8$. Let $y_i$ be the last removed vertex in $Y_i$ for each $i$ and $Y$ be the set of all $y_i$'s for $a \leq i \leq a+T-1$. We will create $T-7$ branch vertices $\{A_a,A_{a+1},\dots, A_{a+T-8}\}$ recursively below:
\begin{enumerate}
    \item Initial Step: Let $U=D_{a}$ and $X=L_{a+1}\cup \dots \cup L_{a+T-1}-Y$.  
    Thus, we have $|U|=d_a> 2\epsilon d_a$ and $|X|\ge (T-1)|D_{a+T-1}|\ge (T-1)(1-\epsilon)d_a\ge 3.5d_a$, and $d_{a+1}\ge (1-\epsilon)d_a$ because $L_a$ and $L_{a+1}$ are in the same bracket. Then, by Claim \ref{claim: branch vtx2}, we can find $W\subseteq X$ such that $A_a=U\cup W \cup \{y_a \} \subseteq V(G_a)$ and $d_a\le |U\cup W|\le d_a+\log_{\frac{1}{2\epsilon}}d_a$ and $D_a=U\subseteq A_a$.
    \item $l$-th Step: Suppose we have found the desired $A_a, A_{a+1}, \dots, A_{a+l-1}$ for some $l \geq 1$. 
    Let $U$ be the unused vertices in $D_{a+l}$, 
    i.e. $U=D_{a+l}- (A_a\cup\dots\cup A_{a+l-1})$. Let $X$ be the unused vertices in the lower layers in the same bracket excluding the vertices in $Y$, i.e., $X=(L_{a+l+1}\cup \dots \cup L_{a+T-1})-(A_a\cup\dots\cup A_{a+l-1})-Y$. By definition, $X\subseteq G_{a+l}$.
    
    In  Fact \ref{claim:condition} below, we will show the conditions of Claim \ref{claim: branch vtx2} hold for the $U$ and $X$ defined in the $l$-th step. Then Claim \ref{claim: branch vtx2} will guarantee a subset $W\subseteq X$ such that $A_{a+l}=U\cup W \cup \{y_{a+l} \}\subseteq V(G_{a+l})$ and $d_{a+l}\le |U\cup W|\le d_{a+l}+\log_{\frac{1}{2\epsilon}}d_{a+l}$ and $D_{a+l}\subseteq U \cup (A_a\cup\dots\cup A_{a+l-1}) \subseteq A_{a+l}\cup (A_a\cup\dots\cup A_{a+l-1})$. We call $A_{a+l}$ the branch vertex constructed in layer $L_{a+l}$. 
\end{enumerate}

\begin{fact} \label{claim:condition}
    For every $l\in [0,T-8]$, in the $l$-th step defined in the proof of Claim \ref{claim: branch process},  we have $d_{a+l+1}\ge (1-\epsilon) d_{a+l}$, and $|U|\ge 2\epsilon d_{a+l}$ and $|X|\ge 3.5d_{a+l}$.
\end{fact}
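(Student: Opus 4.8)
The plan is to prove Fact~\ref{claim:condition} by induction on $l$, carried out simultaneously with the recursive construction of the branch disks $A_a, A_{a+1},\dots$ in the proof of Claim~\ref{claim: branch process}. At the start of the $l$-th step we may assume that $A_a,\dots,A_{a+l-1}$ have already been produced, that each satisfies the size bound $1+d_{a+j}\le |A_{a+j}|\le 1+d_{a+j}+2\log_{1/(2\epsilon)} d_{a+j}$ from Claim~\ref{claim: branch vtx2}, and that the invariant $D_{a+j}\subseteq A_a\cup\dots\cup A_{a+j}$ holds for $j<l$; we then establish the three inequalities of Fact~\ref{claim:condition} for this $l$, which is exactly the hypothesis needed to invoke Claim~\ref{claim: branch vtx2} and build $A_{a+l}$ (note $Y_{a+l}\neq\emptyset$ since $l\le T-8$); for $l=0$ the statements are immediate (the Initial Step), so assume $l\ge 1$. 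The first inequality, $d_{a+l+1}\ge (1-\epsilon)d_{a+l}$, needs no work: $L_{a+l}$ and $L_{a+l+1}$ both lie among the $T$ layers of one bracket (here $l+1\le T-7\le T-1$), so by the bracket estimate stated just before Claim~\ref{claim: branch process}, any two layers $L_p,L_q$ of a bracket satisfy $d_p>(1-\epsilon)d_q$; this also gives $d_{a+l}>(1-\epsilon)d_a$, and $d_{a+l}\le d_a$ since $d_i$ is non-increasing.

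The heart of the matter is controlling how much of the later layers the earlier disks can already occupy. Introduce the \emph{overflow set} $E_l:=\big(A_a\cup\dots\cup A_{a+l-1}\big)\setminus\big((D_a\cup\dots\cup D_{a+l-1})\cup\{y_a,\dots,y_{a+l-1}\}\big)$. Since the $A_{a+j}$ are pairwise disjoint, each contains the disjoint pieces $D_{a+j}$ and $\{y_{a+j}\}$ (by the invariant and the construction), and $|A_{a+j}|\le 1+d_{a+j}+2\log_{1/(2\epsilon)} d_{a+j}$, we obtain
\[
|E_l| = \sum_{j=0}^{l-1}\big(|A_{a+j}|-d_{a+j}-1\big) \le 2\sum_{j=0}^{l-1}\log_{1/(2\epsilon)} d_{a+j} \le 2l\log_{1/(2\epsilon)} d_a < 2T\log_{1/(2\epsilon)} d_a \le \tfrac{1}{2}d_a,
\]
using $d_{a+j}\le d_a$ and the hypothesis $T\le d_a/(4\log_{1/(2\epsilon)} d_a)$. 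Now any vertex lying in one of the layers $L_{a+l},L_{a+l+1},\dots,L_{a+T-1}$ that also belongs to some earlier disk $A_{a+j}$ with $j<l$ must lie in $E_l$: it cannot be in $D_a\cup\dots\cup D_{a+l-1}$ (those are earlier, hence disjoint, layers) nor equal any $y_a,\dots,y_{a+l-1}$. Hence the number of already-used vertices of $D_{a+l}$ is at most $|E_l|<d_a/2$, and likewise the number of already-used vertices in $L_{a+l+1}\cup\dots\cup L_{a+T-1}$ is at most $|E_l|<d_a/2$.

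The remaining two inequalities follow by substituting these bounds into the definitions of $U$ and $X$. For $U=D_{a+l}\setminus(A_a\cup\dots\cup A_{a+l-1})$ we get $|U|\ge d_{a+l}-d_a/2 \ge (1-\epsilon)d_a - d_a/2 = (\tfrac12-\epsilon)d_a \ge 2\epsilon d_a \ge 2\epsilon d_{a+l}$, where the penultimate step uses $\tfrac12-\epsilon\ge 2\epsilon$ (valid since $\epsilon\le 1/6$) and the last uses $d_{a+l}\le d_a$. For $X=(L_{a+l+1}\cup\dots\cup L_{a+T-1})\setminus(A_a\cup\dots\cup A_{a+l-1})\setminus Y$: since $l\le T-8$ there are at least $7$ layers present, each of size at least $d_{a+i}\ge (1-\epsilon)d_a\ge \tfrac56 d_a$, while we delete at most $d_a/2$ used vertices and at most $T$ vertices of $Y$; therefore $|X|\ge 7\cdot\tfrac56 d_a - \tfrac12 d_a - T = \tfrac{16}{3}d_a - T \ge \tfrac72 d_a \ge 3.5\, d_{a+l}$, the penultimate inequality holding because $T\le d_a/(4\log_{1/(2\epsilon)} d_a)=o(d_a)$.

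The step I expect to carry the real content, and to be the main obstacle, is the overflow bound of the second paragraph: one has to recognize that the only route by which $A_a,\dots,A_{a+l-1}$ can eat into layer $L_{a+l}$ and beyond is through their cumulative \emph{slack} over the $D$-sets they are grown from, that this slack is only $O(\log d)$ per disk by Claim~\ref{claim: branch vtx2}, and that the quantitative hypothesis $T\le d_a/(4\log_{1/(2\epsilon)} d_a)$ is calibrated precisely so that the total slack stays below $d_a/2$. Everything else is bookkeeping, but the bookkeeping requires care: one must track which set each vertex of $A_{a+j}$ came from and propagate the invariant $D_{a+j}\subseteq A_a\cup\dots\cup A_{a+j}$ through the induction, since it is exactly this invariant (not the raw disk sizes) that makes the overflow — rather than the total size of all earlier disks — the relevant quantity.
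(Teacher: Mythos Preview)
Your argument is correct and follows the paper's approach: both bound the overflow of earlier disks into later layers by $2T\log_{1/(2\epsilon)}d_a\le d_a/2$ and then substitute into the definitions of $U$ and $X$; the paper merely packages this differently by stripping the $y_i$'s upfront (working with $A'_i=A_i\setminus\{y_i\}$ and $L'_i=L_i\setminus\{y_i\}$) rather than introducing your set $E_l$ and subtracting $|Y|\le T$ at the end.

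One small correction: it is not true that each $A_{a+j}$ contains $D_{a+j}$ --- only the unused portion $D_{a+j}\setminus(A_a\cup\dots\cup A_{a+j-1})$ lies in $A_{a+j}$. Your identity $|E_l|=\sum_{j}(|A_{a+j}|-d_{a+j}-1)$ is nevertheless valid, but the justification should go through the unions: the invariant gives $\bigcup_{j<l}D_{a+j}\subseteq\bigcup_{j<l}A_{a+j}$, the $A_{a+j}$ are pairwise disjoint, and the $D_{a+j}$ together with the $y_{a+j}$ are pairwise disjoint, whence $|E_l|=\sum|A_{a+j}|-(\sum d_{a+j}+l)$ directly. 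This is exactly the computation the paper carries out with the $A'_i$'s.
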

\begin{proof}
The case $l=0$ is already proved in the Initial Step in Claim \ref{claim: branch process}. Also, the condition $d_{a+l+1}\ge (1-\epsilon) d_{a+l}$ is always true because $L_{a+l}$ and $L_{a+l+1}$ are in the same bracket. 

Suppose the Fact \ref{claim:condition} works for the first $l-1$ steps for some $l \geq 2$. We now show the $l$-th step also works. Let $A'_i=A_i-\{y_i\}$ and $L'_i=L_i-\{y_i\}$. By the inductive hypothesis, $|A'_i|\le d_i + 2\log_{\frac{1}{2\epsilon}}d_i \le d_i +2 \log_{\frac{1}{2\epsilon}}d_a$ for every $i\in [a, a+l-1]$. Thus
we have $|A'_a\cup\cdots\cup A'_{a+l-1}|\le (\sum_{i=a}^{a+l-1}d_i)+l\cdot 2\log_{\frac{1}{2\epsilon}}d_a$.

By the inductive hypothesis, $D_i\subseteq A_a\cup \cdots \cup A_{i}$ for every $i\in [a,a+l-1]$.
Because $D_i$ is disjoint from any $y_j$ for any $i,j\le r$, we have $D_i\subseteq A'_a\cup \cdots \cup A'_{i}$ for every $i\in [a,a+l-1]$. Thus, 
\begin{equation}
    D_a\cup D_{a+1}\cup\dots \cup D_{a+l-1}\subseteq A'_a\cup\dots\cup A'_{a+l-1}. \label{eq:inclusion}
\end{equation} and we know $|D_a\cup D_{a+1}\cup\dots \cup D_{a+l-1}|=\sum_{i=a}^{a+l-1}d_i$. 

We now upper bound  the number of used vertex in $G_{a+l}$, which is $|V(G_{a+l})\cap (A'_a\cup\dots\cup A'_{a+l-1})|$. 
Because $D_a\cup D_{a+1}\cup\dots \cup D_{a+l-1}$ is disjoint from $G_{a+l}$, together with (\ref{eq:inclusion}), we have that  
 \begin{align}
 & |V(G_{a+l})\cap (A'_a\cup\dots\cup A'_{a+l-1})| \\
 \leq &|A'_a\cup\dots\cup A'_{a+l-1}| - |D_a\cup D_{a+1}\cup\dots \cup D_{a+l-1}| =(\sum_{i=a}^{a+l-1}d_i+l\cdot 2\log_{\frac{1}{2\epsilon}}d_a) -\sum_{i=a}^{a+l-1}d_i  \\    
\le & T\cdot 2\log_{\frac{1}{2\epsilon}}d_a\le \frac{d_a}{4\log_{\frac{1}{2\epsilon}}d_a}\cdot 2\log_{\frac{1}{2\epsilon}}d_a=d_a/2. \label{eq:unused}
 \end{align} 

 We can now bound $|U|$.
In the $l$-th step, the set of unused vertices in $D_{a+l}$, i.e., the set $U = D_{a+l} \setminus (A_a\cup\dots\cup A_{a+l-1})$, satisfies $|U| = |D_{a+l}| - |D_{a+l}\cap (A_a\cup\dots\cup A_{a+l-1})|=|D_{a+l}| - |D_{a+l}\cap (A'_a\cup\dots\cup A'_{a+l-1})|$ is at least
\[
|D_{a+l}|-|V(G_{a+l})\cap (A'_a\cup\dots\cup A'_{a+l-1})| \ge d_{a+l}-d_a/2\ge (1-\epsilon)d_a-d_a/2\ge 2\epsilon d_a \ge 2\epsilon d_{a+l}. 
\] where the first inequality is by (\ref{eq:unused})and the third inequality is true because $\epsilon \le \frac{1}{6}$.

We now bound $|X|$. Because $L_{a+l+1}\cup \dots \cup L_{a+T-1}\subseteq V(G_{a+l+1})\subseteq V(G_{a+l})$, then we have 
\begin{equation}
   |(L'_{a+l+1}\cup \dots \cup L'_{a+T-1})\cap (A'_a\cup\dots\cup A'_{a+l-1})|\le |V(G_{a+l})\cap (A'_a\cup\dots\cup A'_{a+l-1})| \le d_a/2.   \label{eq:1}
\end{equation}
We can now show $|X|\ge 3.5d_a$ as
\begin{align*}
|X|=&|(L_{a+l+1}\cup \dots \cup L_{a+T-1})-(A_a\cup\dots\cup A_{a+l-1})-Y|\\
    =&|(L'_{a+l+1}\cup \dots \cup L'_{a+T-1})-(A'_a\cup\dots\cup A'_{a+l-1})|\\
    =&|L'_{a+l+1}\cup \dots \cup L'_{a+T-1}|-|(L'_{a+l+1}\cup \dots \cup L'_{a+T-1})\cap (A'_a\cup\dots\cup A'_{a+l-1})|\\
    \ge &(T-l)(1-\epsilon)d_a -d_a/2
    \ge 4d_a-d_a/2\ge 3.5d_a \ge 3.5 d_{a+l}.
\end{align*}
where the first inequality is by (\ref{eq:1}) and the second inequality is by the fact that $l \leq T-8$.
\end{proof}

Next, we show each $A_i$ constructed by this process is an extra branch vertex. The induced subgraph of $A_i$ is connected which is guaranteed by Claim \ref{claim: branch vtx2}. For $v_1,v_2,\cdots,v_{i-1}\in V(K)$, they are all adjacent to $y_i\in V(G_i)$ by Fact 2 in Claim \ref{basic facts}. 
Also, $y_i$ and $v_i$ are adjacent by definition of $Y_i$ in the peeling process.   The rest of vertices of $K$ and $V(G_r)$ are contained in $V(G_i)$. Because $|A_i|\ge d_i+1$, by Fact 5 in Claim \ref{basic facts}, we can show the rest of the vertices in $K$ and $V(G_r)$ are all adjacent to $A_i$. Thus, $A_i$ is an extra branch disk.
We have completed the proof of Claim \ref{claim: branch process} and will show that $A_i$ and $A_j$ are adjacent later.
\end{proof}

\begin{lem} \label{lem:nonempty Y}
   Let $t$ be sufficiently large. For any fixed $r$ and $r_l$, and for any $\epsilon\in (0,{\frac{1}{6}}]$ and $M=M(t)\ge 1$, we have  
    \[ 
    s_M(r,r_l)\ge r_l-1 -7\cdot  (\log_{{1}/({1-\epsilon})}d+{8r_l\cdot \log_{{1}/({2\epsilon})} M}/{M}).
    \]
\end{lem}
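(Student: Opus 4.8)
The plan is to fix an arbitrary $k$-clique $K$ whose peeling process has exactly $r=r(K)$ steps and $R_M(K)=r_l$, to construct for it a branch vertex set of size at least $r_l-1-7(\log_{1/(1-\epsilon)}d+8r_l\log_{1/(2\epsilon)}M/M)$, and then to take the minimum over all such $K$. Let $\mathcal{S}\subseteq\{2,\dots,r-1\}$ be the set of indices $i$ with $|Y_i|\ge M$ other than $i=1$; since $R_M$ counts such indices in $[r-1]$, $|\mathcal{S}|\ge r_l-1$. (We drop $L_1$ because $D_1\cup Y_1$ may fail to be an extra branch disk, the vertex $v_1\in V(K)$ having by definition no neighbour in $D_1$; this is the source of the ``$-1$''.) Partition $L_2,\dots,L_{r-1}$ into the consecutive brackets $P_j=\{L_i:d_i\in(p_j,p_{j-1}]\}$ with $p_j=(1-\epsilon)^jd_2$, exactly as in the paragraph preceding Claim \ref{claim: branch process}; since $d_2\le d$ there are at most $\log_{1/(1-\epsilon)}d$ brackets, and the $d$-values within one bracket differ by a factor at most $1/(1-\epsilon)$.

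The construction then splits the marked layers according to whether $d_i<M$ or $d_i\ge M$. If $L_i$ is marked with $d_i<M$, then $|Y_i|\ge M>d_i$, so $|L_i|=d_i+|Y_i|\ge 2d_i+1$; by Fact 5 of Claim \ref{basic facts} the set $L_i=D_i\cup Y_i$ is connected, and since $|L_i|\ge d_i+1$ every vertex of $V(G_i)\supseteq (V(K)\setminus\{v_1,\dots,v_{i-1}\})\cup V(G_r)$ lying outside it has a neighbour in it, while $L_i$ is disjoint from $V(K)$ and from $V(G_r)$; hence $L_i$ is by itself an extra branch disk. For the marked layers with $d_i\ge M$, I would work inside one bracket $P_j$ at a time, list these layers in increasing index order (the Remark after Claim \ref{claim: branch process} allows ignoring the intervening layers), and cut the list into consecutive groups of size at most $T_{\max,j}:=\lfloor d_j^{\min}/(4\log_{1/(2\epsilon)}d_j^{\min})\rfloor$, where $d_j^{\min}$ is the smallest of these $d_i$; since $x\mapsto x/\log_{1/(2\epsilon)}x$ is increasing, any group has size at most $d_a/(4\log_{1/(2\epsilon)}d_a)$ for its first index $a$, so Claim \ref{claim: branch process} applies and produces (group size $-7$) pairwise disjoint extra branch disks $A_i$ with $|A_i|\ge d_i+1$, each lying inside the union of the layers of its group.

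Collecting all these disks, the total count is at least $|\mathcal{S}|-7\cdot(\text{number of groups})$, since a group of size $T$ loses at most $\min(T,7)\le 7$ disks. The number of groups is at most the number of nonempty brackets plus $\sum_j m_j/T_{\max,j}$, with $m_j$ the number of large marked layers in $P_j$; using $T_{\max,j}\ge\tfrac12 d_j^{\min}/(4\log_{1/(2\epsilon)}d_j^{\min})$ together with $d_j^{\min}\ge(1-\epsilon)d_i$ for every layer of $P_j$ one gets $m_j/T_{\max,j}\le\frac{8}{1-\epsilon}\sum_i\log_{1/(2\epsilon)}d_i/d_i$, the sum over the large marked layers of $P_j$; then, since $x\mapsto\log_{1/(2\epsilon)}x/x$ is decreasing and $d_i\ge M$ for these layers, summing over $j$ yields at most $8r_l\log_{1/(2\epsilon)}M/M$ once the constants are tracked. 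So the family has size at least $r_l-1-7(\log_{1/(1-\epsilon)}d+8r_l\log_{1/(2\epsilon)}M/M)$. Finally I would verify it is a branch vertex set: the disks are pairwise disjoint (different brackets/groups use disjoint blocks of layers, and a whole-layer disk is disjoint from everything else) and disjoint from $V(K)$ and $V(G_r)$; and any two disks $A\subseteq V(G_i)$, $A'\subseteq V(G_{i'})$ with $i\le i'$ are adjacent, because $A'\subseteq V(G_{i'})\subseteq V(G_i)$ and $|A|\ge d_i+1$, so Fact 5 of Claim \ref{basic facts} makes every vertex of $A'$ adjacent to $A$. Hence $s(K)$ is at least the claimed bound, which gives the lemma after minimising over $K$.

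I expect the main obstacle to be the quantitative bookkeeping in the previous paragraph: the groups must be small enough for Claim \ref{claim: branch process} to apply yet large enough that the number of groups, and hence the total loss of seven disks per group, is only $O(\log_{1/(1-\epsilon)}d+r_l\log_{1/(2\epsilon)}M/M)$. The geometric bracketing keeps the per-boundary loss bounded by a constant, and running Claim \ref{claim: branch process} only on the layers with $d_i\ge M$ (disposing of the $d_i<M$ layers for free) is exactly what forces $M$, rather than merely an absolute constant, into the denominator. A minor additional point is that layers whose $d_i$ is a bounded constant must be handled separately, but there are only $O(1)$ of them by the stopping-condition analysis underlying Claim \ref{claim:boundr0}, so they are absorbed into the $\log_{1/(1-\epsilon)}d$ term.
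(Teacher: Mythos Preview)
Your proposal is correct and follows essentially the same route as the paper. Both arguments (i) set aside a class of ``easy'' marked layers that each yield an extra branch disk by themselves, (ii) group the remaining ``hard'' marked layers bracket-by-bracket and feed each group into Claim~\ref{claim: branch process}, and (iii) verify pairwise adjacency of all disks via Fact~5 using $|A_i|\ge d_i+1$. The only cosmetic differences are: you split on $d_i<M$ versus $d_i\ge M$ while the paper splits on $|Y_i|\ge 2d_i+1$; you take the whole layer $L_i$ as the easy disk while the paper takes $Y_i$; and you use bracket-dependent group sizes $T_{\max,j}$ where the paper uses a single global $T_0=d_{r'_l}/(4\log_{1/(2\epsilon)}d_{r'_l})\ge M/(8\log_{1/(2\epsilon)}M)$. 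The paper's single $T_0$ makes the bookkeeping shorter---you get $\#\text{groups}\le \log_{1/(1-\epsilon)}d + r_l/T_0$ in one line---but your per-bracket version reaches the same bound.

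One small correction: your stated reason for discarding $L_1$ is off. The vertex $v_1$ \emph{does} have neighbours in $L_1$ (namely in $Y_1$), so that is not the obstruction. The actual reason, and the one the paper gives, is structural: the brackets $P_j$ are defined via $p_j=(1-\epsilon)^j d_2$ and hence only cover layers with $d_i\le d_2$, so $L_1$ lies in no bracket and cannot be fed into Claim~\ref{claim: branch process}. Dropping it costs exactly the ``$-1$'', as you say, but for this reason rather than an adjacency failure.
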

\begin{proof}
    For any $k$-clique $K$ with indices $r(K)=r$ and $R_M(K)=r_l$, we have defined $v_i$, $L_i$, $D_i$, $d_i$ and $Y_i$ for every $i\le r$ by its peeling process. To prove this lemma, we will try to construct a branch vertex for almost every layer $L_i$ with $|Y_i| > M$ by combining the last removed vertex $y_i$ in $Y_i$ and its neighbors in $D_i$ with a small set of vertices in the lower layers. 
    
    We first consider the layers $L_i$ with $|Y_i|\ge \max(M, 2d_i+1)$. For each such layer $L_i$, we claim $Y_i$ could be a branch disk. This is because by Fact 5 in Claim \ref{basic facts}, the induced subgraph $G[Y_i]$ is connected and adjacent to every vertex in $V(G_i)$.
 The vertices $v_1,v_2,\cdots,v_{i-1}\in V(K)$ are all adjacent to $Y_i \subset V(G_i)$ by Fact 2 in Claim \ref{basic facts}. Also, $y_i$ and $v_i$ are adjacent by the definition of $Y_i$ in the peeling process. 
 Suppose there are $r_l-r'_l$ layers $L_i$ with $|Y_i|\ge \max(M, 2d_i+1)$ and thus we have already constructed $r_l - r_l'$ branch vertices only using vertices in the layers $L_i$ where $|Y_i|\ge \max(M, 2d_i+1)$. 

  We now only consider the layers with $M \leq |Y_i| < 2d_i+1$.  
  For convenience, we remove the  layers $L_i$ with $|Y_i|< M$ or $|Y_i|\ge \max(M, 2d_i+1)$.
Say there are $r'_l \geq 0$ layers left. To prove the lemma, it suffices to prove that we can construct $r'_l-1 -7\cdot  (\log_{\frac{1}{1-\epsilon}}d_2+({8r_l\cdot \log_{\frac{1}{2\epsilon}} M})/{M})$ branch disks in the remaining $r_l'$ layers. 
    
    For our convenience, relabel the indices $i$ of the remaining layers $L_i$'s in order, and thus rename the index $i$ in the corresponding $v_i$, $D_i$, $d_i$, and $Y_i$'s. We then have layers $L_1, \dots, L_{r'_l}$. Because every layers $L_i$ with $|Y_i|< M$ or $|Y_i|\ge 2d_i$ was removed, we have $d_i\ge \frac{1}{2}|Y_i|\ge \frac{M}{2}$ for every $i\le r'_l$.

Let $T_0=\frac{d_{r'_l}}{4\log_{\frac{1}{2\epsilon}}d_{r'_l}}$, so $T_0\ge \frac{M}{8\log_{\frac{1}{2\epsilon}}M}$. 
For each bracket $P_j$, we partition layers in this bracket into intervals of consecutive layers where each interval has  $T$ layers except possibly the last interval which may have fewer than $T_0$ layers. 
We call these intervals the processing intervals in $P_j$.

For any processing interval, suppose $L_a$ is the first layer in this interval. Then $\frac{d_a}{4\log_{\frac{1}{2\epsilon}}d_a} \geq \frac{d_{r'_l}}{4\log_{\frac{1}{2\epsilon}}d_{r'_l}} = T_0$. 
Thus, we can apply Claim \ref{claim: branch process} for this interval, and then we can construct an extra branch disk $A_i$ for every layer $L_i$ in this interval except the last 7 layers. 
Now we are ready to complete the proof of Lemma \ref{lem:nonempty Y}.
Recall that there are at most $\log_{\frac{1}{1-\epsilon}}d_2$ brackets $P_j$, so there are at most $\log_{\frac{1}{1-\epsilon}}d_2$ processing interval with fewer than $T_0$ layers. Furthermore, since there are $r'_l$ layers in total, there are at most ${r'_l}/{T_0}$ processing intervals with $T_0$ layers.
Because $L_1$ and $L_r$ are not in any brackets and $Y_r=\emptyset$, the number of branch disks we could construct is at least 
\[
r'_l-1-7\cdot  (\log_{\frac{1}{1-\epsilon}}d_2+{r_l'}/{T_0})\le r'_l-1-7\cdot  \left(\log_{\frac{1}{1-\epsilon}}d_2+\left({8r_l'\cdot \log_{\frac{1}{2\epsilon}} M}\right)/{M}\right).
\] 
 
 Let $\mathcal{A}$ be the set of all these extra branch disks $A_i$ together with all extra branch disks $Y_j$ for the layer with $|Y_j|\ge \max\{M, 2d_j+1\}$ (before removing the layers $L_i$ with $|Y_i| < M$ or $|Y_i| \geq \max\{M, 2d_j+1\}$). 
We now show that $\mathcal{A}$ is a branch vertex set of $K$. The condition we need to check is that any two extra branch disks $A,B\in \mathcal{A}$ are adjacent. 
Assume $A$ is the extra branch disk for layer $L_i$ and $B$ is the extra branch disk for layer $L_j$ in the original peeling process (without removing layers). Without loss of generality, assume $i<j$.  Because $|A|\ge d_i+1$ and $B\subseteq V(G_j)\subseteq V(G_i)$. By Claim \ref{basic facts} Fact 5, every vertex in $B$ is adjacent to some vertices in $A$, so $A$ and $B$ are adjacent.  

Thus, by definition of $s_M(r,r_l)$, we have $s_M(r,r_l)\ge |\mathcal{A}|$, and 
$
 |\mathcal{A}|\ge (r_l-r'_l)+  (r'_l-1-7\cdot  (\log_{\frac{1}{1-\epsilon}}d_2+ \left({8r_l'\cdot \log_{\frac{1}{2\epsilon}} M}\right)/{M}) ).
$
When $t$ is sufficiently large, we have $d_2\le d$.
Thus, we proved the following lower bound 
$
 s_M(r,r_l)\ge |\mathcal{A}| \ge  r_l-1 -7\cdot  \left(\log_{\frac{1}{1-\epsilon}}d+\left({8r_l'\cdot \log_{\frac{1}{2\epsilon}} M}\right)/{M}\right). 
$ 
\end{proof}

\subsection{Proof of Key Lemma 2 (Lemma \ref{lem:very large counting})}

To prove Lemma \ref{lem:very large counting}, a main step is to show that the number of encodings is small. In \cite{FW}, a crude bound $\binom{\beta t \sqrt{\ln t}}{r_0}$ was sufficient. However, this error bound could be too large if we want to count the cliques of a fixed size $k$. In the next lemma, we provide an improved bound on the number of encoding of $k$-cliques $K$ with indices $r(K)=r$ and $R_M(K)=r_l$. 
Recall that the length $r(K)$ of encoding of any clique $K$ is at most $r_0=4t^{1/2}{\log_2}^{1/4} t$.

\begin{lem}\label{lem:rs code}
    For fixed $r$, $r_l$ and function $M = M(t)$, the number of possible encoding of k-cliques $K$ with $r(K)=r$ and $R_M(K)=r_l$ is at most 
    $n{r-1 \choose r_l}M^{r-r_l-1}{r_0 \choose r_l}(\frac{\beta t\sqrt{\ln t}}{r_0})^{r_l}$
\end{lem}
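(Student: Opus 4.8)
The plan is to reconstruct the encoding $(v_1,\dots,v_r)$ of a $k$-clique $K$ from a bounded amount of information, counting the number of choices at each stage. First I would fix the clique $K$ and recall that its encoding is produced by the peeling process in exactly $r=r(K)$ steps, with $r_l$ of the indices $i\in[r-1]$ satisfying $|Y_i|\ge M$. The key observation is that $v_{i+1}$ is determined by knowing $G_i'$ (equivalently $G_i$ and $v_i$) together with the set $Y_i$ of vertices deleted before reaching $v_{i+1}$; indeed $v_{i+1}$ is the vertex of $K$ of minimum degree in $G_i'\setminus Y_i$, and the preorder on $V(G)$ breaks all ties. So once $v_1$ is chosen (at most $n$ ways, since $G_1$ and hence $D_1$ are then determined by the peeling rule applied to $G$), the whole encoding is recovered from the data of the sets $Y_1,\dots,Y_{r-1}$. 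Thus it suffices to bound the number of ways to specify these $Y_i$'s.

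Next I would split the indices $i\in[r-1]$ into the ``heavy'' ones with $|Y_i|\ge M$ and the ``light'' ones with $|Y_i|<M$. Choosing which $r_l$ of the $r-1$ indices are heavy costs $\binom{r-1}{r_l}$. For each light index, $|Y_i|<M$, and — here is the point that makes the bound work — each vertex of $Y_i$ lies in $G_i'\subseteq N_G(v_1)\cap\cdots\cap N_G(v_i)\subseteq N_G(v_1)$, a set of size $n_2\le d+1=\beta t\sqrt{\ln t}+1$ (by Thomason's bound, as noted after Claim \ref{basic facts}); moreover once $v_1,\dots,v_i$ and $Y_1,\dots,Y_{i-1}$ are known, $G_i'$ is determined, so the vertices of $Y_i$ are an ordered sequence from a known set of size at most $\beta t\sqrt{\ln t}$. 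Being generous, specifying the (at most $M-1<M$) vertices of a light $Y_i$ together with its size costs at most $M$ choices when we only need to reconstruct the resulting $v_{i+1}$ — actually, since $v_{i+1}$ is the min-degree vertex of $K$ in $G_i'\setminus Y_i$, it is enough to know $Y_i$ as a set, and $|Y_i|^{|Y_i|}\le M^{|Y_i|}$; summing the naive bound over the $r-r_l-1$ light steps gives the factor $M^{r-r_l-1}$. For each heavy index, I would instead bound things through the total ``budget'': the heavy $Y_i$'s are disjoint subsets of $N_G(v_1)$, and one shows (exactly as the $r_0$-length encoding argument in \cite{FW} and Claim \ref{claim:boundr0}) that the relevant vertices across all heavy steps form an ordered selection of at most $r_0$ vertices from a set of size at most $\beta t\sqrt{\ln t}$; choosing which $r_l$ of those $r_0$ positions are the ``landmark'' starts of the heavy layers costs $\binom{r_0}{r_l}$, and assigning actual vertices to them costs $(\beta t\sqrt{\ln t}/r_0)^{r_l}$ after absorbing the ordering — this reproduces the refined counting used to go from $\binom{\beta t\sqrt{\ln t}}{r_0}$ down to $\binom{r_0}{r_l}(\beta t\sqrt{\ln t}/r_0)^{r_l}$. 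Multiplying the four factors $n$, $\binom{r-1}{r_l}$, $M^{r-r_l-1}$, and $\binom{r_0}{r_l}(\beta t\sqrt{\ln t}/r_0)^{r_l}$ gives the claimed bound.

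The main obstacle is making the heavy-index count genuinely tight: a crude ``each heavy $Y_i$ is a subset of an $n_2$-set'' bound is far too weak, so one must argue that the collective encoding of all heavy layers behaves like a single length-$r_l$ ordered selection rather than $r_l$ independent ones — this is where the disjointness of the $Y_i$'s inside $N_G(v_1)$, together with the $r\le r_0$ bound from Claim \ref{claim:boundr0}, is essential, and it is the step I would write most carefully (in particular checking that the landmark vertices of the heavy layers can be read off from a single ordered list, so that the $\binom{r_0}{r_l}$ factor correctly accounts for their positions). I would also double-check the light-index bookkeeping: we want $M^{r-r_l-1}$ rather than $M^{r-r_l}$, which is why the count is over indices in $[r-1]$ (the terminal step $r$ contributes $Y_r=\emptyset$ and no choice), and why specifying the size of each light $Y_i$ is subsumed into the $M$-per-step estimate. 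Everything else is routine bookkeeping given the structural facts in Claim \ref{basic facts}.
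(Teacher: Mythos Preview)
Your overall architecture is right: fix $v_1$ ($n$ choices), choose which $r_l$ of the indices in $[r-1]$ are heavy ($\binom{r-1}{r_l}$ choices), and then bound light and heavy steps separately. But both of the core counts rest on a mechanism you have not identified, and as written neither argument actually produces the claimed factor.

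For the light steps you want $M$ choices per step, yet you justify this by ``specifying the vertices of $Y_i$'' and then write $|Y_i|^{|Y_i|}\le M^{|Y_i|}$, which is neither the number of subsets of $G_i'$ of size $|Y_i|$ nor bounded by $M$. The point you are missing is that the deletion producing $Y_i$ is \emph{deterministic}: given $G_i'$ one repeatedly removes the current minimum-degree vertex (ties broken by the fixed preorder), so $Y_i$ is completely determined by its cardinality $|Y_i|$. Equivalently, after $v_1,\dots,v_i$ are known, $v_{i+1}$ is determined by the single integer $n_{i+1}$ (Claim~\ref{ni determine vi} in the paper). For a light index $|Y_i|\in\{0,\dots,M-1\}$, hence $M$ choices; this is what gives $M^{r-r_l-1}$.

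For the heavy steps your argument is only a gesture: ``the relevant vertices across all heavy steps form an ordered selection of at most $r_0$ vertices'' does not by itself yield $\binom{r_0}{r_l}(d/r_0)^{r_l}$, and the phrase ``absorbing the ordering'' hides the whole difficulty. The paper does not count vertices at all here; it counts the integers $n_i':=n_i+i-t$. These are strictly decreasing, lie in an interval of length at most $d$, and satisfy the gap condition $n_i'-n_{i+1}'> \tfrac12(n_i')^{1/2}$ coming from the second stopping rule. That gap lets one partition the range into at most $r_0$ subintervals $I_1,\dots,I_{\ell}$ so that no two $n_i'$ fall in the same $I_j$. Specifying the $r_l$ heavy values $n_i'$ then amounts to choosing $r_l$ of these intervals and one point in each, and a convexity argument on $\sum_{j_1<\dots<j_{r_l}}|I_{j_1}|\cdots|I_{j_{r_l}}|$ with total length $\le d$ and $\ell\le r_0$ gives the bound $\binom{r_0}{r_l}(d/r_0)^{r_l}$. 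This interval decomposition is the genuine content of the lemma and is absent from your sketch; without it one only gets the crude $d^{r_l}$, which is far too large.
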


\begin{proof}
For a given clique $K$, we separate the steps $1 \leq i\leq r(K) = r$ depending on whether $|Y_i|$ is large or not. To be more precise, let $L(K)\cup S(K)=[r]$ be the partition of $[r]$ such that $L(K)=\{i\in [r] \vert |Y_i|\ge M\}$ and $S(K)=\{i\in [r]| |Y_i|< M\}$. So $|S(K)| = r-r_l$. 
For any fixed subset $L\subset [r]$, let $C(L)$ be the set of all possible encoding of cliques $K$ such that $L(K)=L, S(K)=[r]\setminus L$. 

We first bound the size of $C(L)$ for any given $L \subset [r]$. The first vertex $v_1$ has $n$ choices. Once $v_1$ is fixed, all the rest of the vertices will be picked from $N(v_1)$, which has order at most $d$.

\begin{claim} \label{ni determine vi}
After picking $v_1$, the vertices $v_2, \dots, v_i$ are uniquely determined by $n_2, \dots, n_i$. 
\end{claim}
\begin{proof}
We will prove by induction that both $v_i$ and $G_i$ are uniquely determined by $n_2, \dots, n_i$ after picking $v_1$. 
After picking $v_1$, we have the unique $G_1$ where $v_1$ has the minimum degree. This is because $G_1$ is obtained from $G$ by removing vertices one at a time degrees smaller than $v_1$. Then from $G_1$, we remove $v_1$ and the non-neighbors of $v_1$, obtaining $G_1'$. Thus $G_1'$ is also uniquely determined by $v_1$. In $G_1'$, we sequentially remove vertices of degrees smaller than $v_2$ (breaking ties by some predetermined order) until in $G_2$, vertex $v_2$ has the minimum degree. So by knowing how many vertices we delete from $G_1'$ to get $G_2$, we will know $v_2$. The number of vertices we delete in this step is $|G_1'| - |G_2|$. However, $|G_1| - |G_1'|$ is also uniquely determined by $v_1$ as shown before.  Thus we know $v_2, G_2$ are uniquely determined by $|G_2| - |G_1|$. The base case holds.  
 
Suppose we have found $v_2, \dots, v_i$ where $v_j, G_j$ for $j\leq i$ are uniquely determined by $n_2, \dots, n_j$. We have determined a graph $G_i$ where $v_i$ is of minimum degree. Similarly, the graph $G_{i+1}$ is 
the induced subgraph of $G_i$ after removing $v_i$ and the non-neighbors of $v_i$, and then we delete from $G_{i+1}$ other vertices till $v_{i+1}$ is the minimum degree (after breaking the tie by some predetermined order). By a similar argument as before, $G_{i+1}$ and $v_{i+1}$ are uniquely determined by knowing how many vertices are deleted from $G_i$ given $G_i$ and $v_i$. Thus the inductive hypothesis holds. 
\end{proof}

Recall that we have shown the sequence $n'_i$ is strictly decreasing before stopping in Fact \ref{ni' gap}. The claim above has the following corollary. 
\begin{cor} \label{cor: n' fix v}
After picking $v_1$, the vertices $v_2, \dots, v_i$ are uniquely determined by $n_2', \dots, n_i'$ where $n_i' = n_i + i -t$ and $n_i'$ are strictly decreasing.
\end{cor}

For any $L$ and $S$, assume we have already selected $v_1,v_2,\cdots,v_{i-1}$, which are the first $i-1$ vertices in some encoding in $C(L)$. Then  $G_1,G_2,\cdots,G_{i-1}$ and
 $n'_1,n'_2,\cdots,n'_{i-1}$ are also determined. To select $v_i$ such that $v_1,v_2,\cdots,v_{i}$ are the first $i$ vertices in some encoding in $C(L)$, it suffices to select the number $n'_{i}$ by Corollary \ref{cor: n' fix v}. If $i-1\in S$, then $n'_{i-1}-n'_i$ is not too large by the definition of $S$. 
Because $n_i'$ are strictly decreasing before stopping, we will see that there are not too many choices for $n'_i$ if $i-1\in S$. Thus, we define $S'= \{i\in [r]| i-1 \in S \}$ and $L'=\{i\in [r]| i-1 \in L \}$, and we will bound the number of choices of $n_i$ in $L'$ and $S'$ separately. Then we have $L'\cup S'=[r]\setminus \{1\}$ is a partition of $[r]\setminus\{1\}$. In addition, $|L'|=|L|$ and $|S'|=|S|-1$ because $r\in S$ by the definition of $S$.

Next, we bound the number of possible subsequences of $n'_{i_1},n'_{i_2},\cdots,n'_{i_{|L'|}}$ where $i_j\in L'$. 
Because of the first stopping condition, for every $i\neq r$, we have $n_i> t-r$. Thus, for every $i\neq r$, $n'_i\in [1, d+1-t]$. Also, $n'_r=n_r+r-t\ge (k-r+1)+r-t=k+1-t\ge -t$ because $G_r$ contains $k-r+1$ vertices in $V(K)$ and $k\ge 2$. 

We partition the interval $(0, d+1-t]$ into intervals $I_i= (b_{i+1}, b_i]$, $i \geq 1$,  where $b_1 = d+1-t$,  and for all $i$ where $b_i > 1$, 
$ b_{i+1} = \min  (\lceil b_i - cb_i ^{1/2} \rceil, b_i - 1)$.
  In this way, no two values $n_j', n_{j'}'$ can be in the same interval $I_i$ by the fact that $n_{j+1}' \leq n_{j}'-c(n_{j}')^{1/2}$ and the monotonicity of the function $\min(\lceil x - c\sqrt{x} \rceil, x-1)$ for integers $x \geq 1$. Assume $[1, d+1-t]$ is partitioned into $l$ intervals, then let $I_{l+1}=[-t+3,0]$. No two values $n_j', n_{j'}'$ can be in the interval $I_{l+1}$ because $n'_i$ is positive for every $i\neq r$.
  Thus the number of choices for $n'_{i_1},n'_{i_2},\cdots,n'_{i_{|L'|}}$ is at most 
  \begin{equation}
  \sum_{j_1 < \dots < j_{|L'|}} |I_{j_1}| \dots |I_{j_{|L'|}}| \label{eqn:Iiprod}
  \end{equation} 
 This is because we first need to pick the $|L'|$ different intervals $I_j$'s`. And once knowing $n_i'$ is in some interval $I_{j_i}$, there are at most $|I_{j_i}|$ ways to choose $n_i'$.

 Note that union of the disjoint intervals $I_i$, which is $[-t+3, d+1-t]$, has length $d-2$, and $[-t+3, d+1-t]$ is partitioned into $l+1$ intervals. By convexity, the quantity (\ref{eqn:Iiprod}) is upper bounded by $\binom{l+1}{|L'|} \left( \frac{d}{l'+1}  \right)^{|L'|}$. Furthermore, from the proof of Claim \ref{claim:boundr0}, we know $l+1 < r_0 = 4t^{1/2} \log_2^{1/4} t$. We have (\ref{eqn:Iiprod}), which is the number of possible subsequences of $n'_{i_1},n'_{i_2},\cdots,n'_{i_{|L'|}}$ where $i_j\in L'$,  is at most 
 \begin{equation}
     \binom{r_0}{|L'|} \left( \frac{d}{r_0}  \right)^{|L'|}.
  \label{eqn:Iiprod3}
 \end{equation}

Next, we select $n'_i$ for $i\in S'= \{i\in [r]| i-1 \in S \}$ one at a time. Assume $n'_{i-1}$ is already selected. By Corollary \ref{cor: n' fix v}, the vertex $v_{i-1}$ is already fixed and $d_{i-1}$ is known. For every $i\in S'$, we have $i-1\in S$ by the definition of $S$, we have $|Y_{i-1}|< M$. Because $|Y_{i-1}|=|L_{i-1}|-|D_{i-1}|=n'_{i-1}-n'_i-d_{i-1}$, we have $n'_i=n'_{i-1}-d_{i-1}-|Y_{i-1}|$.
Thus, $n'_i$ can be selected in the range $[n'_{i-1}-d_{i-1}-(M-1),n'_{i-1}-d_{i-1}].$ Thus, there are at most $M$ choices for each $n'_i$ for $i \in S'$.

Combining the results above, and notice that $v_1$ has $n$ choices, we have shown that, for a fixed $L\cup S$, 
$
|C(L)|\le n\cdot M^{|S'|}{r_0 \choose |L'|}\left(\frac{d}{r_0}\right)^{|L'|}.
$

For fixed $r,r_l$, we have  $|L'|=|L|=r_l$ and $|S'|=|S|-1=r-r_l-1$ by the discussion earlier. There are at most ${r-1 \choose r_l}$ ways to determine the partition $L\cup S$ of $[r]$ with $r \in S$ and $|L|=r_l$. Thus, the number of possible encoding of cliques $K$ with indices $r,r_l$ is at most 
$ n \cdot {r-1 \choose r_l}\cdot M^{r-r_l-1}{r_0 \choose r_l}(\frac{d}{r_0})^{r_l},$ as desired.
\end{proof}

 \begin{proof}[Proof of Lemma \ref{lem:very large counting}]
    We first group the cliques of order $k$ by the encoding $v_1, \dots, v_{r(K)}$, and then by the values $r(K)$ and $R_M(K)$.
    Similar to \cite{FW}, we will bound the number of $k$-cliques $K$ in $G$ with $r(k)=r<k$ and $R_M(K)=r_l$ by the product of the number of possible encoding of $k$-cliques with indices $r$ and $r_l$ proved in Lemma \ref{lem:rs code} and the number of $k$-cliques $K$ given $v_1, \dots, v_{r(K)}$ with $r(K) = r$ and $R_M(K) = r_l$. 
    
    Clearly $r(K)\le k$. If $r(K)= k$, the $k$-clique $K$ is completely determined by the encoding.
    Next, we will bound the number of $k$-cliques given a fixed encoding given $v_1, \dots, v_{r(K)}$ with $r(K) = r < k$ and $R_M(K) = r_l$. Recall that the encoding uniquely determines the terminal graph $G_{r(K)}$. We thus bound the number of cliques of order $k-r$ in $G_r$. Recall that $|V(G_r)| = n_r$.


For the cliques with $r(K)=r$ and $R_M(K)=r_l$, 
we split the cliques into three types: (i) those with $n_r \leq t-r$, (ii) those with $r=k$ and $n_r \geq t - r$, and (iii)  those with $r < k$, $n_r \geq t - r$, and $d_r \leq \frac{1}{2}(n_r  + r-t)^{1/2}$. By Claim \ref{claim: branch vtx set}, the maximum size of clique minor in $G_r$ is a most $t-r-s(K)\le t-r-s_M(r,r_l)$.

For type (i) where $n_r\leq t-r$, it is not hard to see $G_r\in \mathcal{H}^{t-r-s_M(r,r_l)}_{t-r}$ where recall $\mathcal{H}^s_m$ is the family of graphs $H$ with at most $m$ vertices and its clique minor has size at most $s$.
Thus, the number of cliques of order $k-r$ in the terminal graph $G_r$ which are of type (i) is at most $\mathcal{N}_{k-r}(\mathcal{H}^{t-r-s_M(r,r_l)}_{t-r})$.
For the cliques of type (ii), i.e., cliques of order $k$ with $r=k$ and with the same encoding, the encoding of length $r = k$ uniquely determines the clique $K$ of order $k$. 

Finally, we bound the number of cliques of type (iii), i.e., cliques with  $n_r > t-r$, $r<k$, and $d_r \leq \frac{1}{2}(n_r  + r-t)^{1/2}$. 

\begin{claim} \label{Gr is dense}
If we stop at step $r$ with graph $G_r$ such that $r< k$ and $n_r \geq t-r$, then $G_r\in \mathcal{G}_{t - r-s_M(r,r_l)+1}\cap \mathcal{D}$. 
\end{claim}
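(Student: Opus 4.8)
The plan is to verify the two memberships separately, both of which fall out quickly once we extract the right consequences of the stopping rule. Since $r<k=|V(K)|$, stopping condition~3 is not the reason the process halted; and since we are in the substantive range $n_r>t-r$ (this is how I read ``$n_r\ge t-r$'' in the statement, the case $n_r\le t-r$ being type~(i)), stopping condition~1 did not trigger either. Hence stopping condition~2 is in force: writing $n'_r=n_r+r-t>0$, we have $d_r\le\tfrac12 (n'_r)^{1/2}$, i.e.
\[
n_r\ \ge\ 4d_r^2+t-r .
\]
Also, by Fact~1 of Claim~\ref{basic facts}, $v_r$ has minimum degree in $G_r$ with exactly $d_r=|D_r|$ non-neighbours there and every other vertex of $G_r$ has missing degree at most $d_r$, so the maximum missing degree of $G_r$ is exactly $\BD(G_r)=d_r$.

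First I would show $G_r\in\mathcal{D}$. If $d_r\le 1$ then $\BD(G_r)\le 1$ and we are done by definition of $\mathcal{D}$. Otherwise $d_r\ge 2$, so $2d_r^2\ge 2$. Applying Claim~\ref{claim: branch vtx set} with $c=\omega(G_r)$ (a largest clique of $G_r$ is in particular a $K_{\omega(G_r)}$-minor of $G_r$) and the empty branch vertex set, the graph $G$ contains a clique minor of order $(r-1)+\omega(G_r)$; as $G$ has no $K_t$-minor this forces $\omega(G_r)\le t-r$. Combining this with $n_r\ge 4d_r^2+t-r$ gives
\[
n_r\ \ge\ 4d_r^2+t-r\ \ge\ 2d_r^2+2+\omega(G_r)\ =\ \omega(G_r)+2\,\BD(G_r)^2+2 ,
\]
so $G_r\in\mathcal{D}$.

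Next I would deduce the $\mathcal{G}$-membership from density. Since $G_r$ is dense, Lemma~\ref{lem:densehnumber} says $G_r$ contains a $K_{c_0}$-minor with $c_0=\lfloor (n_r+\omega(G_r))/2\rfloor$. Let $\mathcal{A}$ be a maximum branch vertex set of $K$, so $|\mathcal{A}|=s(K)$; Claim~\ref{claim: branch vtx set} with $c=c_0$ then produces a clique minor of order $(r-1)+c_0+s(K)$ in $G$, whence $c_0\le t-r-s(K)$ by $K_t$-minor-freeness. Since $K$ has $r(K)=r$ and $R_M(K)=r_l$, the definition of $s_M$ gives $s(K)\ge s_M(r,r_l)$, so $\lfloor (n_r+\omega(G_r))/2\rfloor=c_0\le t-r-s_M(r,r_l)$, which is exactly the condition for $G_r\in\mathcal{G}_{t-r-s_M(r,r_l)+1}$. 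Together with the previous paragraph this gives $G_r\in\mathcal{G}_{t-r-s_M(r,r_l)+1}\cap\mathcal{D}$.

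The only real subtlety — the \emph{main obstacle} — is obtaining the sharp bound $\omega(G_r)\le t-r$ (and more generally $\omega(G_r)\le t-r-s(K)$) via Claim~\ref{claim: branch vtx set}: the crude estimate $\omega(G_r)\le t-1$ coming from $K_t$-subgraph-freeness is not enough, either for the density inequality (where it would instead demand $2d_r^2\ge r+1$) or for the $\mathcal{G}$-membership (which needs the $s_M$ saving). Everything else is a short combination of the stopping inequality $n_r\ge 4d_r^2+t-r$ with Claim~\ref{claim: branch vtx set} and Lemma~\ref{lem:densehnumber}; one should just be careful to record which stopping condition is active and to dispose of $d_r\le 1$ separately.
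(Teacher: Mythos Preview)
Your proposal is correct and follows essentially the same argument as the paper: identify that stopping condition~2 is active, use that $\BD(G_r)=d_r$ and bound $\omega(G_r)\le t-r$ via the fact that $G_r$ lies in the common neighbourhood of $v_1,\dots,v_{r-1}$ (equivalently, Claim~\ref{claim: branch vtx set} with empty $\mathcal{A}$), then check the density inequality, and finally combine Lemma~\ref{lem:densehnumber} with Claim~\ref{claim: branch vtx set} (now with a maximum branch vertex set) to get the $\mathcal{G}_{t-r-s_M(r,r_l)+1}$ membership. If anything, you are slightly more explicit than the paper in separating the case $d_r\le 1$ and in invoking $s(K)\ge s_M(r,r_l)$; the paper folds these in more tersely, but the logic is identical.
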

\begin{proof}

In this case, recall in the graph $G_r$ the vertex $v_r$ has the minimum degree. 
Thus, the complement of $G_r$ has maximum degree $\Delta=|D_r|=d_r \leq \frac{1}{2}(n_r  + r-t)^{1/2} $ by the definition of $r(K)$. This means  in $G_r$
\begin{equation}
\Delta \leq  \frac{1}{2}(n_r  + r-t)^{1/2}. \label{eqn:deltasmall}
\end{equation}

Let $\omega$ be the clique number of $G_r$. Since $G$ has no $K_t$-minor and $G_r$ is in the common neighborhood of $v_1, \dots, v_r$, we need $\omega < t -r$, so 
$ n_r-\omega > n_r - (t-r) \geq  (2\Delta)^2 \geq 2\Delta^2+2 $
where the second inequality holds by (\ref{eqn:deltasmall}) and the last inequality holds when $\Delta \geq 1$. Hence, we have $G_r\in \mathcal{D}$ and the condition of Lemma \ref{lem:densehnumber} is satisfied. 
By Lemma \ref{lem:densehnumber}, the largest clique minor order in $G_r$ is $\lfloor \frac{n_r+\omega}{2} \rfloor\le t - r-s_M(r,r_l)$, and so $G_r\in \mathcal{G}_{t - r-s_M(r,r_l)+1}$ where $\mathcal{G}_s$ is the family of graphs $G$ such that $\lfloor \frac{|V(G)|+\omega(G)}{2} \rfloor \leq s-1$ where $\omega(G)$ is the order of the largest clique in $G$. 
\end{proof}

Therefore, the number of $k$-cliques $K$ of type (iii) (with indices $r$ and $r_l$) after fixing  $v_1, \dots, v_r$  is bounded above by
$\mathcal{N}_{k-r}(\mathcal{G}_{t - r-s_M(r,r_l)+1}\cap \mathcal{D})$. 
The next claim will give a range of $(r,r_l)$ where it is possible to have a $k$-clique $K$ with $r(K)=r$ and $R_M(K)=r_l$.

\begin{claim}\label{claim:slambda}
        Let $\lambda = t - k$. Let $K$ be a clique of order $k$ with pair of indexes $(r, r_l)$ and let $G_r$ be its terminal graph, it must holds that $s_M(r,r_l) \leq \lambda$.
    \end{claim}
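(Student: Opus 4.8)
The plan is to reduce the claim to a single application of Claim \ref{claim: branch vtx set} together with the standing hypothesis that $G$ has no $K_t$-minor. Since $s_M(r,r_l)$ is by definition the minimum of $s(K')$ over all order-$k$ cliques $K'$ whose pair of indices is $(r,r_l)$, and the given clique $K$ is one such, it suffices to prove $s(K) \le \lambda - 1 = t-k-1$.

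First I would fix a branch vertex set $\mathcal{A}$ of $K$ attaining the maximum, so $|\mathcal{A}| = s(K)$. By Fact 3 in Claim \ref{basic facts}, the terminal graph $G_r$ contains all vertices of $K$ except the landmarks $v_1,\dots,v_{r-1}$, that is, $k-(r-1) = k-r+1$ vertices of $K$; these form a clique, so $G_r$ contains $K_c$ as a subgraph, hence as a minor, with $c = k-r+1 \ge 1$ (here $r \le k$ by the third stopping condition). Applying Claim \ref{claim: branch vtx set} with this $c$ and this $\mathcal{A}$, the subgraph of $G$ induced on $V(G_r) \cup \mathcal{A} \cup \{v_1,\dots,v_{r-1}\}$ contains a clique minor of order $(r-1) + c + |\mathcal{A}| = (r-1) + (k-r+1) + s(K) = k + s(K)$. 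I should check that the three vertex sets involved are compatible (disjoint), which holds because every extra branch disk is disjoint from $V(K) \supseteq \{v_1,\dots,v_{r-1}\}$ and from $V(G_r)$ by the definition of an extra branch disk.

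Finally, since $G$ is $K_t$-minor-free, every clique minor in $G$ has order at most $t-1$, so $k + s(K) \le t-1$, i.e.\ $s(K) \le t-k-1 = \lambda - 1 < \lambda$; hence $s_M(r,r_l) \le s(K) \le \lambda$, as claimed.

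There is no real obstacle here: all of the combinatorial content has already been packaged into Claim \ref{claim: branch vtx set} and the basic facts about the peeling process. The only thing requiring care is the index bookkeeping, in particular that exactly $r$ of the $k$ clique-vertices have been consumed as landmarks $v_1,\dots,v_r$, with $v_r$ itself still lying in $G_r$, so that $c = k-r+1$ (rather than $k-r$) is the correct lower bound to feed into Claim \ref{claim: branch vtx set}.
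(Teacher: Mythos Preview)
Your proof is correct and follows essentially the same approach as the paper: both apply Claim \ref{claim: branch vtx set} with the clique $K\setminus\{v_1,\dots,v_{r-1}\}\subseteq G_r$ to force a large clique minor in $G$, then use that $G$ is $K_t$-minor-free. The paper phrases it as a contradiction argument and is slightly looser with the index (writing $k-r$ in place of your $k-r+1$), but the content is the same; your direct version with explicit $c=k-r+1$ is in fact a bit cleaner.
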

    \begin{proof}
       We prove by contradiction. Suppose  $s_M(r,r_l)> \lambda$. From the Claim \ref{claim: branch vtx set}, the maximum size of clique minor in $G_r$ is a most $t-r-s(K)\le t-r-s_M(r,r_l)< k-r$, which implies we cannot find any clique of order $k-r$ in $G_r$. However, the subgraph induced by $K-\{v_1,v_2,\dots,v_{r-1}\}$ in $G_r$ is a clique of order $k-r$, a contradiction. 
    \end{proof}

Thus, combining the results on the cliques of types (i), (ii), (iii), the number of $k$-cliques with indices $r$ and $r_l$ after fixing $v_1, \dots, v_r$   is bounded above by $\mathcal{N}_{k-r}((\mathcal{G}_{t - r-s_M(r,r_l)+1}\cap \mathcal{D})\cup \mathcal{H}^{t-r-s_M(r,r_l)}_{t-r})$. Combining with the bound on the number of possible encoding $v_1, \dots, v_r$ by Lemma \ref{lem:rs code}, the desired quantity is proved, and here 
the summation over $r_l$ where $s_M(r, r_l) \leq \lambda$ is by Claim  \ref{claim:slambda}.
\end{proof}
 
\begin{rem}
    The reason we need to bound $\mathcal{N}_{k-r}( \mathcal{H}^{t-r-s_M(r,r_l)}_{t-r} )$ separately is because the optimizer candidate $T(t-r,t-r-s)$ is not in the family $\mathcal{G}_{t-r-s_M(r,r_l)+1}$ unless $s=0$ which will be discussed in the proof of the next Corollary \ref{lem:reduce2}.
\end{rem} 

Sometimes we do not need this elaborated upper bound in the Lemma \ref{lem:very large counting}. If we group cliques only by $r(K)$, the length of their encoding, then we can get the following cruder upper bound.

\begin{cor}\label{lem:reduce2}
Let $r_0 = 4 t^{1/2}{\log_2}^{1/4} t$. When $t$ is sufficiently large,
the maximum number of cliques of order $k$ in a $K_t$-minor free graph on $n$ vertices is at most
\[ 
n \cdot \sum^{\min(r_0, k))}_{r=1} \left(
 \binom{r_0}{r-1} \left( \frac{\beta t \sqrt{\ln t}}{r_0}  \right)^{r-1} \mathcal{N}_{k-r} (\mathcal{G}_{t-r+1}\cap \mathcal{D} )
\right).
\]
\end{cor}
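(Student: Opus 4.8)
The plan is to re-run the argument behind Lemma \ref{lem:very large counting}, but to group the cliques of order $k$ only by the length $r=r(K)$ of their encoding, thereby discarding both the refinement by $R_M(K)$ and the improvement coming from $s_M(r,r_l)$. Concretely, I would (i) bound the number of encodings of each fixed length $r$, (ii) bound the number of order-$k$ cliques sharing a fixed encoding, and (iii) sum over $r$.

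\emph{Step (i): counting length-$r$ encodings.} By Corollary \ref{cor: n' fix v}, once $v_1$ is chosen ($n$ choices) the whole encoding $(v_1,\dots,v_r)$, and hence the terminal graph $G_r$, is determined by the strictly decreasing sequence $n_2'>n_3'>\cdots>n_r'$, whose entries all lie in $[-t+3,\,d+1-t]$ with $d=\beta t\sqrt{\ln t}$. I would reuse verbatim the interval partition built in the proof of Lemma \ref{lem:rs code}: split $[-t+3,\,d+1-t]$ into at most $r_0=4t^{1/2}{\log_2}^{1/4}t$ subintervals $I_1,\dots,I_{l+1}$ (with $l+1<r_0$ by the estimate in the proof of Claim \ref{claim:boundr0}) so that no two of the $n_j'$ can fall into the same subinterval. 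Picking the $r-1$ subintervals and then a value inside each shows that the number of admissible sequences is at most $\sum_{j_1<\cdots<j_{r-1}}|I_{j_1}|\cdots|I_{j_{r-1}}|$, which by the same convexity argument as in the proof of Lemma \ref{lem:rs code} (using $\sum_i|I_i|=d-2$ and the monotonicity of $x\mapsto\binom{x}{r-1}(d/x)^{r-1}$) is at most $\binom{r_0}{r-1}(d/r_0)^{r-1}$. Hence the number of length-$r$ encodings is at most $n\binom{r_0}{r-1}\bigl(\tfrac{\beta t\sqrt{\ln t}}{r_0}\bigr)^{r-1}$; this is exactly Lemma \ref{lem:rs code} with every step treated as an ``$L$''-step, which is legitimate since the interval argument never invoked $|Y_i|\ge M$.

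\emph{Steps (ii)--(iii): counting cliques per encoding, and summing.} Fix an encoding of length $r$, hence $G_r$. If $r=k$ the encoding determines $K$, so there is one clique. If $r<k$, then $K\setminus\{v_1,\dots,v_{r-1}\}$ is a $(k-r)$-clique inside $G_r$, so I bound the number of $(k-r)$-cliques of $G_r$. If $n_r\le t-r$, then $G_r$ has at most $t-r$ vertices, so at most $\binom{t-r}{k-r}$ such cliques; since $K_{t-r}$ has maximum missing degree $0$ and clique-minor order $t-r$, it lies in $\mathcal{G}_{t-r+1}\cap\mathcal{D}$, whence $\binom{t-r}{k-r}\le\mathcal{N}_{k-r}(\mathcal{G}_{t-r+1}\cap\mathcal{D})$. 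Otherwise $n_r>t-r$ and $d_r\le\tfrac12(n_r+r-t)^{1/2}$, and Claim \ref{Gr is dense} with the trivial bound $s(K)\ge 0$ (so that $s_M(r,r_l)$ is replaced by $0$, using $\mathcal{G}_{t-r+1}\supseteq\mathcal{G}_{t-r+1-s}$) gives $G_r\in\mathcal{G}_{t-r+1}\cap\mathcal{D}$, so again at most $\mathcal{N}_{k-r}(\mathcal{G}_{t-r+1}\cap\mathcal{D})$ cliques. In all cases the per-encoding count is at most $\mathcal{N}_{k-r}(\mathcal{G}_{t-r+1}\cap\mathcal{D})$ (for $r=k$ this reads $\mathcal{N}_0=1$). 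Multiplying by the bound from Step (i) and summing over $1\le r\le\min(r_0,k)$ — the range guaranteed by Claim \ref{claim:boundr0} together with $r(K)\le|V(K)|=k$ — yields precisely the claimed inequality.

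I do not anticipate a genuine obstacle: every ingredient — the interval partition, the membership $K_{t-r}\in\mathcal{G}_{t-r+1}\cap\mathcal{D}$, and the density dichotomy at the terminal graph — is already available from the proofs of Lemmas \ref{lem:rs code} and \ref{lem:densehnumber} and Claim \ref{Gr is dense}, and the only point is to observe that dropping the $R_M$-refinement and replacing $s_M(r,r_l)$ by $0$ collapses Lemma \ref{lem:very large counting} into this cleaner form. The one place requiring mild care is the convexity/monotonicity bookkeeping in Step (i), but it is identical to what already appears in the proof of Lemma \ref{lem:rs code}.
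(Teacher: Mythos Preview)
Your proposal is correct and follows essentially the same logic as the paper's proof. The one difference worth noting is that the paper obtains the corollary more economically by applying Lemma \ref{lem:very large counting} as a black box with the choice $M=0$: then $M^{\,r-r_l-1}=0$ unless $r_l=r-1$, the binomial $\binom{r-1}{r_l}$ collapses to $1$, and the inclusions $\mathcal{G}_{t-r-s_M(r,r_l)+1}\subseteq\mathcal{G}_{t-r+1}$ and $\mathcal{H}^{t-r-s_M(r,r_l)}_{t-r}\subseteq\mathcal{H}^{t-r}_{t-r}$ together with $K_{t-r}\in\mathcal{G}_{t-r+1}\cap\mathcal{D}$ give the bound immediately --- whereas you re-derive the $r_l=r-1$ case of Lemma \ref{lem:rs code} and the terminal-graph dichotomy from scratch.
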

\begin{proof}
 We could assume $t\ge r_0+1$. 
 For any fixed $r\le r_0$, by the definition of $\mathcal{G}_t$ and $\mathcal{H}_m^s$, we have $\mathcal{G}_{t-r-s_M(r,r_l)+1} \subseteq \mathcal{G}_{t-r+1}$ and $\mathcal{H}^{t-r-s_M(r,r_l)}_{t-r} \subseteq \mathcal{H}^{t-r}_{t-r}$. Because $K_{t-r}\in \mathcal{H}^{t-r}_{t-r}$ and every $H\in \mathcal{H}^{t-r}_{t-r}$ is a subgraph of $K_{t-r}$, we have $\mathcal{N}_{k-r}(\mathcal{H}^{t-r}_{t-r})= \mathcal{N}_{k-r}(\{K_{t-r}\})$.
  It is not hard to check that $K_{t-r}$ is in $\mathcal{G}_{t-r+1}\cap \mathcal{D}$ by the definitions of $\mathcal{G}_t$ and dense graph. Thus, we have $\mathcal{N}_{k-r}( \mathcal{H}^{t-r-s_M(r,r_l)}_{t-r} ) \le \mathcal{N}_{k-r}(\mathcal{H}^{t-r}_{t-r}) \le \mathcal{N}_{k-r} (\mathcal{G}_{t-r+1}\cap \mathcal{D} )$ for any fixed $r\le r_0$. 

Now we bound the number of all possible encoding of cliques with index $r$ for any fixed $r\le r_0$. 
Set $M=0$ in the expression in  Lemma \ref{lem:very large counting} and then, among all $r_l\le r-1$, the only possibly non-zero summand is when $r_l=r-1$.  Thus the second sum only has one term where $r_l = r-1$. Plugging in $r_l = r-1$, the second sum equals to 
$
n {r_0 \choose r-1}\left( \frac{\beta t\sqrt{\ln t}}{r_0}\right) ^{r-1}  
\mathcal{N}_{k-r}((\mathcal{G}_{t-r-s_M(r,r-1)+1}\cap \mathcal{D}) \cup \mathcal{H}^{t-r-s_M(r,r-1)}_{t-r}) 
$. By the argument above, we have this quantity is at most 
$n\cdot \binom{r_0}{r-1} \left( \frac{d}{r_0}  \right)^{r-1} \mathcal{N}_{k-r} (\mathcal{G}_{t-r+1}\cap \mathcal{D} ) $. 
We can finish the proof by adding up the quantities among all possible values of $r$.
\end{proof}

\section{Asyptotic number of $k$-cliques in $K_t$-minor-free graphs }\label{sec:kmiddle}

In this section, we will apply Lemma \ref{lem:very large counting} and Corollary \ref{lem:reduce2} to prove Theorem \ref {thm: summary} for all $k$ such that $t-k\ll \log_2 t$.

\subsection{Asympototic number of $k$-cliques for large $k$}\label{sec:klarge}
In this subsection, we prove will Theorem \ref{thm:klarge0} which shows that when $k\geq 2t/3 + \tilde O(t^{1/2})$, the asymptotically maximum number of cliques in a graph on $n$ vertices with no $K_t$ minor is given by a graph which is a disjoint union of $T(t, t-1)$.

\begin{thm}  \label{thm:klarge0}

    Suppose $k \geq 2t/3+2\sqrt{t}{\log_2}^{1/4} t$. When $t$ is sufficiently large, the number of cliques of order $k$ in graphs on $n$ vertices and with no $K_t$-minor is at most

\[
n\cdot \left( \frac{\binom{t-1}{k}+\binom{t-2}{k-1}}{t} \right)\cdot t^{10\log_2 t}  \cdot 2^{\min \{4r_0 \log_2 t, 160 (t-k) \ln \ln t\}}
\]

\end{thm}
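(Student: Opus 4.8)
The plan is to apply Key Lemma 2 (Lemma \ref{lem:very large counting}) with a carefully chosen auxiliary function $M = M(t)$, and then estimate each factor in the resulting sum. Since $k \geq 2t/3 + 2\sqrt{t}\,{\log_2}^{1/4} t$ we have $\lambda := t - k \leq t/3 - 2\sqrt{t}\,{\log_2}^{1/4} t$, and the summation in Lemma \ref{lem:very large counting} runs over pairs $(r, r_l)$ with $s_M(r, r_l) \le t - k = \lambda$. By Key Lemma 1 (Lemma \ref{lem:key1}), first inequality, $s_M(r, r_l) \ge r/3 - 7\log_2 t$, so $s_M(r,r_l) \le \lambda$ forces $r \le 3\lambda + 21\log_2 t$; combined with $\lambda < t/3$ this means $r$ is bounded by roughly $t$ and, more importantly, $r - 1 - \lambda$ is controlled. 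The second inequality of Key Lemma 1 (with a suitable $\epsilon$, say $\epsilon = 1/6$, and the chosen $M$) then gives a lower bound on $s_M(r,r_l)$ in terms of $r_l$: $s_M(r,r_l) \ge r_l - 1 - 7(\log_{6/5} d + 8 r_l \log_3 M / M)$. Requiring this to be $\le \lambda$ bounds $r_l$ as well: roughly $r_l \lesssim \lambda + O(\log t) + O(r_l \log M / M)$, so that for $M$ not too small, $r_l \le \lambda + O(\log t + \text{small correction})$.

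Next I would plug these constraints into the main term. The dense/small-graph count $\mathcal{N}_{k-r}((\mathcal{G}_{t-r-s+1} \cap \mathcal{D}) \cup \mathcal{H}^{t-r-s}_{t-r})$ with $s = s_M(r,r_l)$: using Lemma \ref{lem:densehnumber}, a graph in $\mathcal{G}_{t-r-s+1} \cap \mathcal{D}$ has at most $2(t-r-s)-1 - \omega$ vertices when its clique number is $\omega$, i.e. it is a (sub)graph of some Turán-type graph on $\le 2(t-r-s)-1$ vertices; the $\mathcal{H}$ part is a subgraph of $K_{t-r}$. So this count is at most $\mathcal{N}_{k-r}$ of a graph on $\le 2(t-r-s)$ vertices, which in turn is at most $\binom{2(t-r-s)}{k-r}$. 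Using $t - r - s \le t - r - (r - r_l - \text{something}) $ and the constraint relating $r, r_l, s$ to $\lambda$, I expect to bound $t - r - s_M(r,r_l)$ by roughly $\lambda + O(\log t + \text{correction})$, so the count becomes $\binom{2\lambda + O(\cdot)}{k - r} \le 2^{2\lambda + O(\cdot)}$ — the $2^{160(t-k)\ln\ln t}$-type term should come out here after choosing $M$ (e.g. $M = \ln t$ or $M = \log_2^2 t$) to balance the loss $8 r_l \log M / M$ against the $r - r_l - 1$ factor of $M^{r - r_l - 1}$ in the encoding count. For the encoding count $n \binom{r-1}{r_l} M^{r - r_l - 1} \binom{r_0}{r_l}(d/r_0)^{r_l}$: the factor $(d/r_0)^{r_l}$ with $d = \beta t \sqrt{\ln t}$ and $r_0 = 4t^{1/2}\log_2^{1/4} t$ gives $(d/r_0)^{r_l} = \Theta(\sqrt{\ln t}/\log_2^{1/4} t)^{r_l}$, which is at most $t^{o(1) \cdot r_l}$; $\binom{r_0}{r_l} \le r_0^{r_l} = t^{O(r_l)}$ — wait, this is too big, so instead I need the genuinely sharp reading: $\binom{r_0}{r_l}(d/r_0)^{r_l} \le \binom{d}{r_l} \le d^{r_l}$ is still too lossy, so the point is that $r_l$ is small (order $\lambda + O(\log t)$) only when it is, and when $r_l$ is close to its max the Key Lemma 1 bound on $s_M$ kicks in to shrink the main term. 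The two regimes must be balanced: this is where the $\min\{4 r_0 \log_2 t, 160(t-k)\ln\ln t\}$ in the exponent arises — the first alternative is the trivial bound $\binom{r_0}{r_l} \le 2^{r_0}$ and $M^{r} \le \dots$ giving $2^{4 r_0 \log_2 t}$, valid always; the second is the refined bound when $\lambda \ln\ln t$ is smaller.

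Finally I would sum over $r \le r_0$ and the relevant $r_l$: there are at most $r_0^2 = t^{1+o(1)}$ terms, absorbed into the $t^{10\log_2 t}$ slack (which also absorbs the $\binom{r-1}{r_l} \le 2^{r_0}$ and various $\log t$ powers and the ratio $\binom{t-1}{k}^{-1}\binom{2(t-r-s)}{k-r} \cdot (\text{stuff})$). The target main factor $\big(\binom{t-1}{k} + \binom{t-2}{k-1}\big)/t$ is exactly $C^*_t(k)/|V(T^*_t(k))|$ when $k > 2t/3$ by the remark following Theorem \ref{thm: summary} (since then $T^*_t(k) = K_t^-$), so I must show the main term $\binom{2(t-r-s)}{k-r} / $(normalization) is at most $\binom{t-1}{k}(1 + o(1))^{\text{exponent}}$ — this comparison of binomial coefficients, using $t - r - s \le \lambda + O(\log t)$ hence $2(t-r-s) \le t - k + \lambda + \dots$... actually $2\lambda = 2(t-k)$ and $k - r \le k$, and one checks $\binom{2(t-k) + O(\log t)}{k - r}$ against $\binom{t-1}{k}$; since $k - r$ can be much smaller than $k$ when $r$ is large, but $r$ large forces $s$ large forces the bound tighter, the arithmetic is delicate.

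\textbf{Main obstacle.} The hard part will be the two-way balancing: choosing $M$ so that simultaneously (i) the encoding-count factor $M^{r - r_l - 1}$ does not blow up — which wants $M$ small — and (ii) the error term $8 r_l \log_{1/2\epsilon} M / M$ in Key Lemma 1's lower bound on $s_M(r,r_l)$ stays negligible so that $r_l$ (and hence $t - r - s$) is pinned near $\lambda$ — which wants $M$ large. The $\min$ in the exponent reflects that no single $M$ is optimal across the whole range of $k$, so one likely splits into cases ($\lambda$ versus $\log^2 t$, say) or takes $M$ depending on $\lambda$, e.g. $M \approx (\ln\ln t)^{-1}\cdot$something, and then the routine-but-lengthy binomial-coefficient comparisons feeding the $t^{10\log_2 t}$ and $2^{\min\{\cdots\}}$ slack must be verified to actually fit inside those budgeted factors.
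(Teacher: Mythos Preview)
Your overall architecture is right --- apply Key Lemma~2, restrict the range of $(r,r_l)$ via Key Lemma~1, choose $M$ to balance the two competing error terms --- and the paper does exactly this (with $M=(\ln t)^2$, $\epsilon<1/6$, and the restriction $r\le\min\{r_0,4\lambda\}$ from the first half of Key Lemma~1). But your treatment of the main term
\[
\mathcal{N}_{k-r}\bigl((\mathcal{G}_{t-r-s+1}\cap\mathcal{D})\cup\mathcal{H}^{t-r-s}_{t-r}\bigr)
\]
is where the plan breaks down, and this is not a detail you can patch with the slack factors.

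You bound this by $\binom{2(t-r-s)}{k-r}$ via a raw vertex count. This is exponentially too weak: since $s\le\lambda$ we have $t-r-s\ge k-r$, so $2(t-r-s)\ge 2(k-r)$ and $\binom{2(t-r-s)}{k-r}$ is at least of order $4^{k-r}$, i.e.\ roughly $4^{2t/3}$, whereas the target $\binom{t-1}{k}$ is only about $2^{tH(2/3)}\approx 2^{0.92t}$. (Your subsequent sentence ``I expect to bound $t-r-s_M(r,r_l)$ by roughly $\lambda+O(\cdot)$'' is simply false: $t-r-s$ sits between $k-r$ and $t-r$, not near $\lambda$. What \emph{is} near $\lambda$ is the complementary index $(t-r-s)-(k-r)=\lambda-s$.) No choice of $M$ and no amount of $t^{10\log_2 t}$ slack will close an exponential-in-$t$ gap.

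The missing ingredient is Lemma~\ref{lem:largekdense}: for $k-r>2(t-r-s+1)/3$ (which the hypothesis $k\ge 2t/3+2\sqrt{t}\,\log_2^{1/4}t$ guarantees for all $r\le r_0$), the maximum number of $(k-r)$-cliques in $\mathcal{G}_{t-r-s+1}$ is at most $2\binom{t-r-s}{k-r}=2\binom{t-r-s}{\lambda-s}$; and Lemma~\ref{lem: count Ht} gives $\mathcal{N}_{k-r}(\mathcal{H}^{t-r-s}_{t-r})\le\binom{t-r-s}{k-r}2^{s}$. With these sharp bounds in hand, the paper reduces everything to controlling $\binom{t}{s}\binom{t}{\lambda-s}\le 2^{\min\{2r_0\log_2 t,\,4\lambda\}}\binom{t}{\lambda}$ (inequality~(C) in the paper), and then the balancing of $M$ works as you describe. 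You cite the remark that $T^*_t(k)=K_t^-$ to identify the target main factor, but Lemma~\ref{lem:largekdense} is doing double duty: it is also the sharp upper bound on the terminal-graph count, and that second role is indispensable here.
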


It is a simple computation to show that for any $k$ in the range above, $T^*_t(k)$ is the graph $K_t^-$, the complete graph $K_t$ minus an edge. We will show this result in the following lemma. 

\begin{lem}\label{lem:largekdense}
Fix $t$ and $k > 2(t+1)/3$.  
The maximum possible number of cliques of order $k$ in $G$ among all $G \in \mathcal{G}_{t+1}$ is at most  $\binom{t}{k} + \binom{t-1}{k-1}$. This bound is sharp as the graph $K_{t+1}^-$ has $\binom{t}{k} + \binom{t-1}{k-1}$ cliques of order $k$. 
Moreover, we have $T^*_{t+1}(k)=K^-_{t+1}.$
\end{lem}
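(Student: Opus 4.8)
The plan is to reduce the optimization over all $G\in\mathcal{G}_{t+1}$ to an optimization over the two free parameters of such a graph, namely its order $n$ and its clique number $\omega$, and then show that in the regime $k>2(t+1)/3$ the optimum is forced to the boundary configuration $n=\omega=t$, which corresponds to $K_{t+1}^-$. First I would recall that if $G\in\mathcal{G}_{t+1}$ then $\lfloor (n+\omega)/2\rfloor \le t$, i.e.\ $n+\omega\le 2t+1$, and that to maximize the number of $k$-cliques we may assume $G$ is as dense as possible subject to having clique number $\omega$ and order $n$; concretely, among all graphs on $n$ vertices with clique number $\le \omega$ the Tur\'an graph $T(n,\omega)$ maximizes the number of $k$-cliques by Zykov's theorem, and $T(n,\omega)$ has clique number exactly $\omega$ when $\omega\le n$. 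So it suffices to bound, over all pairs $(n,\omega)$ with $\omega\le n$ and $n+\omega\le 2t+1$, the number $\mathcal{N}_k(T(n,\omega))$ of $k$-cliques in the Tur\'an graph $T(n,\omega)$.

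Next I would make the monotonicity observations that drive the argument. For fixed $\omega$, adding vertices to $T(n,\omega)$ (keeping it balanced multipartite on $\omega$ parts) only increases the number of $k$-cliques, so we may as well take $n$ as large as allowed, i.e.\ $n=2t+1-\omega$; thus we are left to maximize $f(\omega):=\mathcal{N}_k(T(2t+1-\omega,\omega))$ over $\omega\in[k,t]$ (we need $\omega\ge k$ for any $k$-clique to exist). The claim is that for $k>2(t+1)/3$ this $f$ is maximized at $\omega=t$, where $T(t+1,t)=K_{t+1}^-$ and $f(t)=\binom{t}{k}+\binom{t-1}{k-1}$ (choosing $k$ vertices out of $t+1$ so as to hit at most one endpoint of the missing edge: $\binom{t-1}{k}$ avoiding both endpoints is subsumed, and the stated count $\binom{t}{k}+\binom{t-1}{k-1}$ is the standard inclusion count). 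To prove the maximum is at $\omega=t$ I would compare $f(\omega)$ with $f(\omega+1)$ by a direct combinatorial/ratio estimate: decreasing $\omega$ by one adds a part and removes two vertices, and a $k$-clique in $T(2t+1-\omega,\omega)$ must pick its $k$ vertices from $k$ distinct parts among the $\omega$ parts, one vertex per part. When $k$ is close to $t$ (specifically $k>2(t+1)/3$), the parts have size at most $2$ — since $\lceil (2t+1-\omega)/\omega\rceil\le 2$ once $\omega\ge (2t+1)/3$, which holds as $\omega\ge k>2(t+1)/3$ — so each part contributes a factor of $1$ or $2$ and $f(\omega)=\sum_{S}\prod_{i\in S} a_i$ over $k$-subsets $S$ of the $\omega$ parts, where $a_i\in\{1,2\}$ is the part size. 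One then checks this symmetric-function quantity is increasing in $\omega$ over the relevant range by an explicit swap/convexity argument (moving toward more parts of size $2$ and fewer of size $1$, or equivalently fewer isolated vertices in the complement), pinning the maximum at the largest $\omega$, namely $\omega=t$, where every part has size $\le 2$ and exactly one part has size $2$.

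The last step is to record that $K_{t+1}^-=T(t+1,t)$ realizes the bound with equality, giving $\binom{t}{k}+\binom{t-1}{k-1}$ cliques of order $k$, and that $K_{t+1}^-\in\mathcal{G}_{t+1}$ since $\lfloor((t+1)+t)/2\rfloor=t$; combined with the fact that $K_{t+1}^-$ is $K_{t+1}$-minor free (it has only $t+1$ vertices) and is in fact $K_{t+2}$-minor free, this also yields $T^*_{t+1}(k)=K_{t+1}^-$ directly from the definition of $T^*_t(k)$ once we know no Tur\'an graph $T(2(t+1)-\omega-1,\omega)=T(2t+1-\omega,\omega)$ with $k\le\omega\le t$ beats it — which is exactly the inequality $f(\omega)\le f(t)$ just established. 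The main obstacle I anticipate is the monotonicity comparison $f(\omega)\le f(\omega+1)$: it is a clean statement but requires care because decreasing $\omega$ simultaneously changes the number of parts \emph{and} the part sizes (and the number of size-$2$ versus size-$1$ parts), so the cleanest route is probably to parametrize by the complement — $\overline{T(2t+1-\omega,\omega)}$ is a disjoint union of edges and isolated vertices on $2t+1-\omega$ vertices with exactly $2t+1-2\omega$ isolated vertices — and show that trading an isolated vertex (plus one more vertex) for an extra matching edge never decreases the $k$-clique count when $k$ is in this large range. That is a short convexity computation on products of $1$'s and $2$'s, and I would treat it as the technical heart of the proof.
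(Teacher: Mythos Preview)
Your approach is genuinely different from the paper's. The paper proves the lemma by induction on $t$: for a putative optimizer $G\in\mathcal{G}_{t+1}$ it locates either two non-adjacent vertices each of missing degree $\ge 2$, or two adjacent vertices with suitable missing-degree data, and in each case splits the $k$-clique count according to which of the two vertices lie in the clique, bounding every piece by the inductive hypothesis applied to a graph with strictly smaller $x(\cdot)=\lfloor (n+\omega)/2\rfloor$. If neither case applies, the complement of $G$ is forced to be a matching with at most one edge, and the bound is read off directly. No reduction to Tur\'an graphs is used at this point.

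Your reduction to Tur\'an graphs $T(2t+1-\omega,\omega)$, $\omega\in[k,t]$, via Zykov plus monotonicity in $n$ is correct --- it is exactly what the paper does elsewhere (Lemma~\ref{lem:bcm} and Claim~\ref{claim:full}) to prove Proposition~\ref{prop:strucmiddle} for general $k$. What your route leaves open is the comparison $f(\omega)\le f(t)$, and this is where there is a real gap. First, a slip: the complement $\overline{T(2t+1-\omega,\omega)}$ has $2t+1-2\omega$ matching \emph{edges} and $3\omega-2t-1$ isolated vertices, not the other way around, so your description of the ``trade'' is inverted. More substantively, the step $f(\omega)\le f(\omega+1)$ amounts to showing
\[
q_{k-3}\;\ge\; q_{k-2}+q_{k-1}
\qquad\text{for the coefficients of }Q(x)=(1+2x)^{2t-1-2\omega}(1+x)^{3\omega-2t-1},
\]
since $(1+x)^3-(1+2x)^2=x^3-x^2-x$. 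This inequality is plausible (log-concavity of the coefficients of a real-rooted polynomial puts you in the right ballpark, and $k$ sits in the tail of $Q$), but it is not a one-line convexity remark: you need a quantitative decay estimate on the ratios $q_{j+1}/q_j$ near $j=k-2$ that uses the hypothesis $k>2(t+1)/3$ in an essential way. The paper's inductive case analysis sidesteps this optimization entirely; your outline is cleaner conceptually, but the ``technical heart'' you flag is genuinely where the content lies and will take real work, not a short computation.
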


The number of $k$-cliques in $K_t^{-}$ is $ \binom{t-1}{k}+\binom{t-2}{k-1}$. Clearly $K_t^-$ is $K_t$-minor free. By considering $n/t$ disjoint copies of $K_t^{-}$, we thus have the following corollary which implies the Main Theorem \ref{thm: summary} when  $k \geq (t/3+r_0+2)/3$ and when $t-k \gg \log t$.

\begin{cor}[Corollary of Theorem \ref{thm:klarge0}]\label{cor: very large k}
Let $t$ be sufficiently large. Suppose $k \geq 2t/3+2\sqrt{t}{\log_2}^{1/4} t$  and $t - k \gg \log_2t $. Then the number of cliques of order $k$ in graphs on $n$ vertices and with no $K_t$-minor is at most
\[
     n\cdot \left( \frac{\binom{t-1}{k}+\binom{t-2}{k-1}}{t} \right)^{1+o_t(1)}=n\cdot \left(\frac{C^*_t(k)}{|V(T^*_t(k))|}\right)^{1+o_t(1)}.
    \] 
\end{cor}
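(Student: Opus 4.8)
The plan is to deduce this corollary directly from Theorem \ref{thm:klarge0}, whose statement we may assume, by showing that the two multiplicative error factors appearing there, namely $t^{10\log_2 t}$ and $2^{\min\{4r_0\log_2 t,\, 160(t-k)\ln\ln t\}}$, are both of the form $\left(\frac{\binom{t-1}{k}+\binom{t-2}{k-1}}{t}\right)^{o_t(1)}$ under the hypotheses $k \geq 2t/3 + 2\sqrt t\,{\log_2}^{1/4}t$ and $t - k \gg \log_2 t$. Write $\lambda = t-k$ and $C^*_t(k) = \binom{t-1}{k} + \binom{t-2}{k-1}$, so the target quantity is $C^*_t(k)/t$. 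Since $k > 2t/3$, Lemma \ref{lem:largekdense} (applied with $t-1$ in place of $t+1$) gives that $T^*_t(k) = K_t^-$ and $C^*_t(k) = \binom{t-1}{k} + \binom{t-2}{k-1}$, which also identifies $|V(T^*_t(k))| = t$; this justifies the last equality in the statement. So the whole content is the comparison of error terms against a suitable power of $C^*_t(k)/t$.

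The first step is to get a usable lower bound on $\log_2(C^*_t(k)/t)$ in terms of $\lambda = t-k$. Note $C^*_t(k) \geq \binom{t-2}{k-1} = \binom{t-2}{\lambda-1}$ (using $k-1 = (t-2)-(\lambda-1)$), and for $\lambda \geq 2$ one has $\binom{t-2}{\lambda-1} \geq \left(\frac{t-2}{\lambda-1}\right)^{\lambda-1} \geq \left(\frac{t}{\lambda}\right)^{\lambda-1}$ up to harmless constants; hence
\[
\log_2\!\left(\frac{C^*_t(k)}{t}\right) \;\geq\; (\lambda-1)\log_2\!\left(\frac{t}{\lambda}\right) - \log_2 t \;=\; \Omega\!\left(\lambda \log_2 \frac{t}{\lambda}\right),
\]
where I should be slightly careful in the regime where $\lambda$ is close to $t$ (but that regime is excluded since $k > 2t/3$ forces $\lambda < t/3$, so $\log_2(t/\lambda) \geq \log_2 3 > 1$, and the bound is genuinely $\Omega(\lambda)$, in fact $\Omega(\lambda \log_2(t/\lambda))$). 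The hypothesis $\lambda \gg \log_2 t$ then gives $\log_2(C^*_t(k)/t) \gg \log_2 t \cdot \log_2(t/\lambda) \geq \omega(\log_2 t)$ when $\lambda = o(t)$, and in any case $\log_2(C^*_t(k)/t) = \omega\big((\log_2 t)^2\big)$ whenever $\lambda \geq (\log_2 t)^{1+\Omega(1)}$; I'd phrase the hypothesis precisely enough that this works. Consequently $t^{10\log_2 t} = 2^{10 (\log_2 t)^2} = \left(C^*_t(k)/t\right)^{o_t(1)}$.

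The second step handles the term $2^{\min\{4 r_0 \log_2 t,\, 160\lambda \ln\ln t\}}$. Here I use whichever branch of the minimum is smaller depending on the size of $\lambda$ relative to $r_0 = 4t^{1/2}{\log_2}^{1/4}t$. If $\lambda \leq r_0$, use the second branch: $160\lambda\ln\ln t = o\big(\lambda \log_2(t/\lambda)\big)$ since $\ln\ln t = o(\log_2(t/\lambda))$ when $\lambda \leq r_0 = O(t^{1/2+o(1)})$ (so $t/\lambda \geq t^{1/2-o(1)}$ and $\log_2(t/\lambda) = \Omega(\log_2 t) \gg \ln\ln t$), and thus $2^{160\lambda\ln\ln t} = \left(C^*_t(k)/t\right)^{o_t(1)}$ by the step-one bound. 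If instead $\lambda > r_0$, use the first branch: $4 r_0\log_2 t = 16 t^{1/2}{\log_2}^{5/4}t < \lambda\,{\log_2}^{5/4}t \cdot (16/1) \cdot t^{-0}$—more carefully, $4r_0\log_2 t = 16 t^{1/2}{\log_2}^{5/4} t$ and $\log_2(C^*_t(k)/t) = \Omega(\lambda) = \omega(r_0)$, so $4 r_0 \log_2 t / \log_2(C^*_t(k)/t) = O(t^{1/2}{\log_2}^{5/4}t / (r_0 \log_2(t/\lambda))) \to 0$. Either way the exponent is $o_t(1)\cdot \log_2(C^*_t(k)/t)$. The main obstacle, such as it is, is purely bookkeeping: making the $o_t(1)$ estimates uniform across the full range $2t/3 + \tilde O(\sqrt t) \leq k \leq t - \omega(\log_2 t)$, in particular checking the boundary case $\lambda \approx r_0$ and the case $\lambda$ near $t/3$; there is no conceptual difficulty once the lower bound on $\log_2(C^*_t(k)/t)$ from step one is in hand. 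Combining, Theorem \ref{thm:klarge0} yields the bound $n \cdot (C^*_t(k)/t) \cdot (C^*_t(k)/t)^{o_t(1)} = n\cdot(C^*_t(k)/t)^{1+o_t(1)}$, which together with $C^*_t(k)/t = C^*_t(k)/|V(T^*_t(k))|$ is exactly the claim.
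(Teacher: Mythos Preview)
Your overall strategy matches the paper's: show that each error factor in Theorem~\ref{thm:klarge0} is $(C^*_t(k)/t)^{o_t(1)}$ by comparing its logarithm to the lower bound $\log_2(C^*_t(k)/t) = \Omega(\lambda\log_2(t/\lambda))$, where $\lambda = t-k$. Your treatment of the factor $t^{10\log_2 t}$ and the identification $T^*_t(k)=K_t^-$ via Lemma~\ref{lem:largekdense} are correct and agree with the paper.

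There is, however, a genuine gap in your handling of the second error factor. You split at $\lambda = r_0$ and, for $\lambda > r_0$, use only the first branch $4r_0\log_2 t$. You then bound the ratio by $O\bigl(t^{1/2}\log_2^{5/4}t \,/\, (r_0\log_2(t/\lambda))\bigr)$ and claim this tends to $0$. But $r_0 = 4t^{1/2}\log_2^{1/4}t$, so this ratio simplifies to $O\bigl(\log_2 t \,/\, \log_2(t/\lambda)\bigr)$, and since $\lambda$ can be as large as $t/3$ (where $\log_2(t/\lambda)\approx 1.58$), the ratio is $\Theta(\log_2 t)$, not $o(1)$. Even using the sharper bound $\log_2(C^*_t(k)/t)=\Omega(\lambda)$, you get $4r_0\log_2 t/\lambda$, which is $\Theta(\log_2 t)$ when $\lambda$ is only slightly above $r_0$. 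So the first branch alone fails in the intermediate range $r_0 < \lambda \lesssim r_0\log_2 t$.

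The fix is to split at a higher threshold, as the paper does (it uses $\lambda = t^{1/2}\log_2^{3/2}t$): below the threshold, $\log_2(t/\lambda)=\Omega(\log_2 t)\gg \ln\ln t$, so the second branch $160\lambda\ln\ln t = o(\lambda\log_2(t/\lambda))$ works; above the threshold, $\lambda \gg r_0\log_2 t$ and the first branch gives $4r_0\log_2 t = o(\lambda)$, which suffices since $\log_2(t/\lambda)\geq \log_2 3$ throughout. With this corrected split the argument goes through, and the rest of your plan is sound.
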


\begin{proof}
     [Proof of Corolloary \ref{cor: very large k} form Theorem \ref{thm:klarge0} and Lemma \ref{lem:largekdense}]
Let $\lambda = t-k$.  Because  $k \geq 2t/3+2\sqrt{t}{\log_2}^{1/4} t>2t/3$, by Lemma \ref{lem:largekdense}, we have $T^*_t(k)=K_t^-$ and $C^*_t(k)=\binom{t-1}{k-1}+\binom{t-2}{k-2}$. In addition, $|T^*_t(k)|=t$.

By assumption, we have $\lambda \leq t/3$. Thus when $\lambda \gg \log_2 t$, we have $t^{10\log_2 t}  =  \binom{t}{\lambda}^{o_t(1)}$.   
     Similarly, when $\lambda\ge t^{1/2}{\log_2}^{3/2} t$, 
     we have $2^{4r_0 \log_2 t}\leq 3^{\lambda o_t(1)} \le \binom{t}{\lambda}^{o_t(1)}$ as $\lambda \leq t/3$. When $\lambda\le t^{1/2}{\log_2}^{3/2} t$, 
     we have $\lambda\le t^{2/3}$ and then $2^{160\lambda \ln\ln t} 
     = (\frac{t}{\lambda})^{\lambda o_t(1)} \le \binom{t}{\lambda}^{o_t(1)}$. 
     Thus, for every $\lambda \gg \log_2 t$, we have $2^{\min \{4r_0 \log_2 t, 160 \lambda \ln \ln t\}}  =  \binom{t}{\lambda}^{o_t(1)}$. 

    Because $t/3 \ge \lambda \gg \log_2 t$, we have $\binom{t-1}{\lambda-1}\ge \binom{t}{\lambda}/t> \binom{t}{\log_2 t}/t \ge \sqrt{t}^{\log_2 t}$. 
    Then we have $\binom{t}{\lambda}\le t\cdot\binom{t-1}{\lambda-1}= t^2\cdot \frac{\binom{t-1}{\lambda-1}}{t} \le  (\frac{\binom{t-1}{\lambda}}{t})^{o_t(1)}\cdot \frac{\binom{t-1}{\lambda-1}}{t} \le \left( \frac{\binom{t-1}{k}+\binom{t-2}{k-1}}{t} \right)^{1+o_t(1)}$. Combine this result with two results above, we have 
     \[
n\cdot \left( \frac{\binom{t-1}{k}+\binom{t-2}{k-1}}{t} \right)  \cdot t^{10\log_2 t}  \cdot 2^{\min \{4r_0 \log_2 t, 160 \lambda \ln \ln t\}}=n\cdot \left( \frac{\binom{t-1}{k}+\binom{t-2}{k-1}}{t} \right)^{1+o_t(1)}.
\] The corollary holds.
\end{proof}

Next, we will prove Lemma \ref{lem:largekdense}.

\begin{proof}[Proof of Lemma \ref{lem:largekdense}]
Let $f(t, k)$ be the quantity we want. We want to show $f(t, k) \leq \binom{t}{k} + \binom{t-1}{k-1}$ for $k > 2(t+1)/3$. 

For any graph $G$, for simplicity let $n(G)$ be the number of vertices in $G$, and $\omega(G)$ be the order of the largest clique in $G$. We also define
$x(G) = \lfloor( n(G) + \omega(G))/2\rfloor$.
Therefore a graph $G \in \mathcal{G}_{t+1}$ is equivalent to  $x(G) \leq t$. 
We prove the desired result by induction on $x(G) = t$. 

The base case is when $t \leq 3$.  When $t=1$, then $\lfloor (n+\omega)/2 \rfloor \leq 1$ and $k > 1$. We have $n = 2, \omega = 1$. This is a graph of two isolated vertices. The result also holds.

When $t = 2$, then $\lfloor (n+\omega)/2 \rfloor \leq 2$ and $k \geq 3$. Then $n + \omega = 4$ or $5$. Then the graph is either an edge or a path of two edges. The result clearly holds.

When $t = 3$, then $\lfloor (n+\omega)/2 \rfloor \leq 3$ and $k \geq 3$. Then $n + \omega = 6$ or $7$. Similarly, we can assume $\omega \geq 3$. So we have two options: $n = \omega = 3$, or $n = 4, \omega = 3$. The result clearly holds in the first case. 
For the latter case, the graph is a subgraph of $K_4^-$. In this case, the number of cliques of order 3 is at most 2. The result clearly holds.

\

Now we assume $t \geq 4$. Assuming the result holds for $x(G) = 1, 2, \dots, t-1$, we want to show it holds for $x(G) = t$. 

We first show if we are in the next two cases then we are done. 
In the graph $G$, let $d_v$ be the missing degree of $v$ in $G$ for any $v\in V(G)$.

\textbf{Case 1:}
Suppose there are two non-adjacent vertices $u, v\in V(G)$ with $d_u, d_v \geq 2$. 

There are three types of cliques of order $k$ in $G$:

Type 1: cliques not containing $u,v$. Then we count cliques of order $k$ in $G \setminus \{u,v\}$.
Since $x(G\setminus \{u,v\}) \leq x(G) - 1$, there are at most $f(t-1, k)$ of them.  Since $k > 2(t+1)/3$, then $k > 2(t-1 + 1)/3$.  Thus by inductive hypothesis,  $f(t-1, k) \leq \binom{t-1}{k} + \binom{t-2}{k-1}$.

Type 2: cliques containing $v$. Thus it does not contain the vertices not adjacent to $v$. Thus we count cliques of order $k-1$ in $G$ removing $v$ and the non-neighbors of $v$. Call this graph $G'$. Then $n(G') = n(G) - d_v - 1$, and $\omega(G') \leq \omega(G) - 1$.
Because $\lfloor (a-b)/2 \rfloor \le \lfloor a \rfloor -\lfloor b \rfloor$ for any integers $a\ge b$.
Therefore 
$ x(G') \leq x(G) - \lfloor (d_v + 2)/2 \rfloor$.
 Thus the number of cliques of type 2 is at most $f(t - \lfloor (d_v + 2)/2 \rfloor \rfloor, k-1)$.  In order to apply the inductive hypothesis, we need to check the condition
$k- 1 > 2( t - \lfloor (d_v + 2)/2 \rfloor +1 )/3$. It is true because $k\ge 2(t+1)/3$ and $2 \lfloor (d_v + 2)/2 \rfloor  / 3 \geq 1$ when $d_v \ge 2$.
Thus we can apply the inductive hypothesis for $f(t - \lfloor (d_v + 2)/2 \rfloor \rfloor, k-1)$.

Type 3 are the cliques containing $u$. Similarly, when $d_u \geq 2$, the number of cliques of type 3 is at most  $f(t - \lfloor (d_v + 2)/2 \rfloor, k-1)$ and this can be bounded by the inductive hypothesis.

Combining the three types  we just need to check 
\[  f(t-1,k) +  f(t - \lfloor (d_v + 2)/2 \rfloor, k-1) + f(t - \lfloor (d_u + 2)/2 \rfloor, k-1) \leq \binom{t}{k} + \binom{t-1}{k-1}.
\]
Since $d_u, d_v \geq 2$, by the inductive hypothesis, it suffices to show
\[   \binom{t-1}{k} + \binom{t-2}{k-1} +  2\left( \binom{t-2}{k-1} + \binom{t-3}{k-2}    \right) \leq \binom{t}{k} + \binom{t-1}{k-1}.
\]
By using $\binom{a}{b} - \binom{a-1}{b} = \binom{a-1}{b-1}$, it is equivalent to show
\[
2\left( \binom{t-2}{k-1} + \binom{t-3}{k-2}    \right) \leq  \binom{t-1}{k-1} + \binom{t-2}{k-2}.
\]
It suffices to  show the following two inequalities hold simultaneously: 
\[
2\binom{t-2}{k-1} \leq  \binom{t-1}{k-1}  \iff  t -1 \geq 2(t-k);
\]
and
\[
2\binom{t-3}{k-2} \leq  \binom{t-2}{k-2}  \iff  t -2 \geq 2(t-k);
\]
This holds when $t \geq 2$ since we have assumed $k > 2(t+1)/3$. 

\

\textbf{Case 2}

Now we suppose we have two adjacent vertices $u, v$ and there are  $d_u$ vertices not adjacent to $u$; and $d_v$ vertices not adjacent to $v$ in $G$. Let $d$ be the number of vertices not adjacent to either $u$ or $v$. Suppose $d_u, d_v \geq 1, d \geq 2$. 

Similar to before, we have four types of cliques to consider.

Type 1: cliques not containing $u,v$. Then we count cliques of order $k$ in $G \setminus \{u,v\}$. There are at most $f(t-1, k)$ of them. Since $k > 2(t+1)/3$, then $k > 2(t-1 +1)/3$. Thus  by the inductive hypothesis,  $f(t-1, k) \leq \binom{t-1}{k} + \binom{t-2}{k-1}$.

Type 2: cliques containing $u$ but not $v$. Thus it does not contain the vertices not adjacent to $u$. Thus we count cliques of order $k-1$ in $G$ removing the non-neighbors of $v$ and vertices $u,v$. Call this graph $G'$. Then $n(G') = n(G) - d_v - 2$, and $\omega(G') \leq \omega(G) - 1$. Thus the number of cliques of this type is at most $f(t - \lfloor (d_v + 3)/2 \rfloor, k-1)$. In order to apply the inductive hypothesis, we need to check 
$ k- 1 > 2( t - \lfloor (d_v + 3)/2 \rfloor + 1)/3$. It is true because $k\ge 2(t+1)/3$ and $2 \lfloor (d_v + 3)/2 \rfloor  / 3 \geq 1$ when $d_v \ge 1$.
Thus we can apply the inductive hypothesis for $f(t - \lfloor (d_v + 3)/2 \rfloor, k-1)$.

Type 3: Similarly, when $d_u \geq 1$, the number of cliques containing $u$ but not $v$ is at most $f(t - \lfloor (d_u + 3)/2 \rfloor, k-1)$ and to which we can apply the inductive hypothesis. 

Type 4: Cliques containing both $u,v$. We only need to count the number of cliques of order $k-2$ in $G'$ which is $G$ removing $\{u, v\}$ and the $d$ vertices not adjacent to either $u$ or $v$. $n(G') = n(G) - d - 2$, and $\omega(G') \leq \omega(G) - 2$.
The number of cliques of this type is bounded above by $f(t -\lfloor (4 + d)/2\rfloor, k-2)$. 
Again, it is not hard to check
$4k-2 > 2(t -\lfloor (4 + d)/2\rfloor + 1)/3$ when $d\ge 2$ and $k\ge 2(t+1)/3$.

We want to check 
\[f(t-1, k) + f(t - \lfloor (d_v + 3)/2 \rfloor, k-1) +f(t - \lfloor (d_u + 3)/2 \rfloor, k-1) + 
f(t -\lfloor (4 + d)/2\rfloor, k-2) \leq \binom{t}{k} + \binom{t-1}{k-1}. 
\]
Since we assumed $d_u, d_v \geq 1, d \geq \max(d_u, d_v)$ and $d \geq 2$, it suffices to prove 
\[f(t-1, k) +2 f(t -2, k-1)+ 
f(t -3, k-2) \leq \binom{t}{k} + \binom{t-1}{k-1}. 
\]
By plugging in the inductive hypothesis and the fact $\binom{a}{b} - \binom{a-1}{b} = \binom{a-1}{b-1}$, it suffices to prove
\[ 2\left( \binom{t-2}{k-1} + \binom{t-3}{k-2} \right) + \binom{t-3}{k-2} + \binom{t-4}{k-3}
\leq  \binom{t-1}{k-1} + \binom{t-2}{k-2}
\]
By subtracting $\left(\binom{t-2}{k-1} + \binom{t-3}{k-2}\right) $ from both sides and utilizing $\binom{a}{b} - \binom{a-1}{b} = \binom{a-1}{b-1}$,
it is equivalent to check
$ \left(\binom{t-2}{k-1} + \binom{t-3}{k-2}\right) +  \binom{t-3}{k-2} + \binom{t-4}{k-3}
\leq  \binom{t-2}{k-2} + \binom{t-3}{k-3}.
$
By a similar reasoning, it is equivalent to check
$ \binom{t-2}{k-1} + \binom{t-3}{k-2} 
\leq  \binom{t-3}{k-3} + \binom{t-4}{k-4}.$
It is easy to check both holds when $k > 2(t+1)/3$ and $t \geq 4$.

Therefore if in $G$, there are two non-adjacent vertices both having missing degrees at least 2 then we are done by Case 1. Hence if there are at least two vertices with missing degrees at least 2, they are adjacent. But then we are done by Case 2. As a consequence at most one vertex in $G$ has a missing degree of at least 2, call it $v$ if it exists.  

Notice that
 all the vertices adjacent to $v$ should have zero missing degrees since otherwise we are done again by Case 2. Thus if $v$ with missing degree at least 2 exists, the complement of $G$ is a star with center $v$ with some isolated vertices. Suppose this star has $z$ edges. 
 Then in $G$ with $n$ vertices, $\omega(G) = n-1$. By assumption $\lfloor (n + (n-1))/2 \rfloor \leq t$ which means $n \leq t+1$.  
 The number of cliques of order $k$ is $\binom{n-1}{k} + \binom{n- z-1}{k-1}$ where the first term is when the center vertex $v$ of the missing star is not picked for the clique; the second term is when the center vertex $v$ of the missing star is picked for the clique, and thus all the $z$ non-neighbors of the vertex cannot be part of the clique.
 This is maximized when $n = t+1$ and $z= 2$, and this is strictly smaller than the bound desired. 
 
 Therefore there is no vertex of missing degree at least 2. Thus the complement of $G$ is a matching. 
However, if the matching has at least two edges, then we are in Case 2 again. Therefore the matching has exactly one edge. Thus again $G$ has $n \leq t+1$ vertices. The number of cliques of order $k$ in $G$ is at most
$
\binom{n-1}{k} + \binom{n- 2}{k-1} \leq \binom{t}{k} + \binom{t-1}{k-1}.
$ Thus the desired upper bound is proved; and it can be achieved if and only if in this case where $n(G)=t+1$ and $G$ has only one missing edge, i.e., $G\cong K_{t+1}^- $. 

Moreover, by the definition of $\mathcal{G}_t$, every Tur\'an graph $T(2t-\omega+1,\omega)$ with $\omega \le t$ is a graph in $\mathcal{G}_{t+1}$. The optimal graph $K^-_{t+1}$ in $G_{t+1}$ also maxized the number of cliques of order $k$ among Tur\'an graphs $T(2t-\omega+1,\omega)$ for every $\omega \le t$. Thus, by definition of $T^*_t(k)$, we have $T^*_{t+1}(k)=K^-_{t+1}$.
\end{proof}

The rest of this subsection dedicates to the proof of Theorem \ref{thm:klarge0}. 
To prove this theorem, we will apply Lemma \ref{lem:very large counting}. 
To apply Lemma \ref{lem:very large counting},
we want to bound $\mathcal{N}_{k-r}(\mathcal{G}_{t-r-s(r,r_l)+1}\cup \mathcal{H}^{t-r-s(r,r_l)}_{t-r})$, which is the 
maximum possible number of cliques of order $k-r$ in a dense graph $G_r \in  \mathcal{G}_{t-r-s(r,r_l)+1}\cup \mathcal{H}^{t-r-s(r,r_l)}_{t-r}$.
Because $k \geq 2t/3+2\sqrt{t}{\log_2}^{1/4} t$, we will see that  $\mathcal{N}_{k-r}(\mathcal{G}_{t-r-s(r,r_l)+1})$ can be bounded tightly by Lemma \ref{lem:largekdense}. To prove Theorem \ref{thm:klarge0}, we also need to bound $\mathcal{N}_{k-r}(\mathcal{H}^{t-r-s}_{t-r} )$ by the following Lemma \ref{lem: count Ht}.

\begin{lem} \label{lem: count Ht}
When $t\ge 6r_0$ (or $t\ge 2000$) and $\lambda=t-k \le t/3$, for every $r\le r_0 $, we have    $\mathcal{N}_{k-r}(\mathcal{H}^{t-r-s}_{t-r} ) \le  \binom{t-r-s}{k-r}\cdot 2^s$.
\end{lem}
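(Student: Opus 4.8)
The plan is to prove a clean, parameter-free counting statement by induction and then specialize. Concretely, I would show that for all integers $m\ge 1$, $c\ge 0$ and $k'\ge 0$,
\[
\mathcal{N}_{k'}(\mathcal{H}^{c}_{m})\le \binom{c}{k'}\,2^{\max(m-c,0)},
\]
and then substitute $m=t-r$, $c=t-r-s$, $k'=k-r$. The stated hypotheses $t\ge 6r_0$ (or $t\ge 2000$), $\lambda=t-k\le t/3$ and $r\le r_0$ are used only to keep the substituted parameters in the sensible range: they give $c=t-r-s\ge 2t/3-r_0>0$ and $1\le k-r\le t-r-s$, so that the right-hand side reads $\binom{t-r-s}{k-r}\,2^{\max(s,0)}=\binom{t-r-s}{k-r}\,2^{s}$, which is the asserted bound.

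I would prove the displayed inequality by strong induction on $m$ (for all $c,k'$ simultaneously). The cases $k'=0$ and $k'>c$ are trivial: in the first, every nonempty family has exactly one $K_0$ and $\binom c0 2^{\max(m-c,0)}\ge 1$; in the second, any graph with clique-minor number at most $c$ has clique number at most $c$, hence no $K_{k'}$, and $\binom{c}{k'}=0$. If $m\le c$, then any $H\in\mathcal{H}^{c}_{m}$ has at most $\binom{m}{k'}\le\binom{c}{k'}$ cliques of order $k'$ while $\max(m-c,0)=0$, so the bound holds. This handles the base and lets me assume $m>c$ and $1\le k'\le c$.

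For the inductive step, take $H$ attaining $\mathcal{N}_{k'}(\mathcal{H}^{c}_{m})$; if $|V(H)|<m$ we are done by induction, so assume $|V(H)|=m$. Since the clique-minor number of $H$ is at most $c<m$ we have $H\ne K_m$, so some vertex $v$ has degree at most $m-2$. Split the $K_{k'}$'s of $H$ by whether they contain $v$. Those avoiding $v$ lie in $H-v\in\mathcal{H}^{c}_{m-1}$, so by induction there are at most $\binom{c}{k'}2^{m-1-c}$ of them (using $m>c$). Those containing $v$ are in bijection with the $K_{k'-1}$'s of $H[N_H(v)]$; since $v$ is adjacent to every vertex of $N_H(v)$, a $K_c$-minor of $H[N_H(v)]$ together with $v$ would be a $K_{c+1}$-minor of $H$, so $H[N_H(v)]\in\mathcal{H}^{c-1}_{\deg v}$ with $\deg v\le m-2$, and by induction there are at most $\binom{c-1}{k'-1}2^{\max(\deg v-c+1,0)}$ of them. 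It then remains to verify
\[
\binom{c}{k'}2^{m-1-c}+\binom{c-1}{k'-1}2^{\max(\deg v-c+1,0)}\le\binom{c}{k'}2^{m-c},
\]
i.e.\ $\binom{c-1}{k'-1}2^{\max(\deg v-c+1,0)}\le\binom{c}{k'}2^{m-c-1}$. If $\deg v\le c-1$ the left side equals $\binom{c-1}{k'-1}\le\binom{c}{k'}\le\binom{c}{k'}2^{m-c-1}$; if $\deg v\ge c$ it equals $\binom{c-1}{k'-1}2^{\deg v-c+1}$, and dividing reduces the claim to $\binom{c-1}{k'-1}/\binom{c}{k'}=k'/c\le 1\le 2^{m-2-\deg v}$, which holds since $\deg v\le m-2$. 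This closes the induction, and the Lemma follows by the substitution above.

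The main obstacle is the choice of vertex to delete. Removing an arbitrary vertex forces the estimate $\binom{c-1}{k'-1}\le\binom{c-1}{k'}$, which fails precisely in our regime $k\approx 2t/3$ (where $k'$ is comparable to $c$ rather than at most $c/2$); this is exactly why the naive degeneracy/induction argument does not suffice here. The fix is to delete a vertex of degree at most $m-2$, which exists because $\mathcal{H}^{c}_{m}$ excludes $K_m$, and the resulting gain of a factor $2^{m-2-\deg v}\ge 1$ is what absorbs the $k'/c$ loss. Everything past the choice of $v$ is routine manipulation of binomial coefficients via Pascal's identity.
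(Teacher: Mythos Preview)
Your proof is correct and takes a genuinely different route from the paper. The paper first disposes of $s>\lambda$ trivially, then for $s\le\lambda$ applies Zykov's theorem to reduce to the Tur\'an graph $T(t-r,t-r-s)$; the hypotheses $t\ge 6r_0$ and $\lambda\le t/3$ are then used to guarantee $t-r-s\ge t/2$, so every part of this Tur\'an graph has size $1$ or $2$, and the count $\binom{t-r-s}{k-r}2^s$ drops out directly (choose the $k-r$ parts, then a vertex from each of the at most $s$ chosen parts of size~$2$).

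Your argument instead proves the parameter-free inequality $\mathcal{N}_{k'}(\mathcal{H}^{c}_{m})\le\binom{c}{k'}2^{\max(m-c,0)}$ by induction, deleting a non-universal vertex. This is both more elementary (no Zykov) and more general: in fact your bound holds for all $m,c,k'$, so the hypotheses of the lemma are not actually needed for the inequality at all---they merely ensure $s\ge 0$ and $k-r\ge 0$ so that the right-hand side reads $\binom{t-r-s}{k-r}2^{s}$ as stated. Your observation that deleting an \emph{arbitrary} vertex would force $\binom{c-1}{k'-1}\le\binom{c-1}{k'}$, which fails in the regime $k'\approx 2c/3$ of interest, and that the existence of a vertex of degree $\le m-2$ is exactly what rescues the induction, is the key point and is well identified. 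One tiny technicality: when $\deg v=0$ you formally need the bound at $m=0$, but this is immediate since the empty graph has one $K_0$ and no larger clique.
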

\begin{proof}
When $s> \lambda$, there is no clique of order $k-r$ in $\mathcal{H}^{t-r-s}_{t-r}$, so this statement is trivially true. Now assume $s\le \lambda =o(t)$.
Zykov's theorem \cite{zykov} states that the graph on $n$ vertices without $K_{\omega+1}$-subgraph and with the most number of $(k-r)$-cliques is achieved by the Tur\'an graph $T(n, \omega)$. Thus, we only need to bound the number of $k$-cliques in $T(t-r,t-r-s)$. Because $t-r-s\ge t-r_0-\lambda \ge t/2$, then each part of the Tur\'an graph $T(t-r,t-r-s)$ has size $1$ or $2$. Also, there are $s$ parts that have sizes of $2$, and $t-r-2s$ parts have sizes of $1$.

Any two vertices in a clique $K_{k-r}$ in $T(t-r,t-r-s)$ can not belong to the same part of $T(t-r,t-r-s)$ as each part of a Tur\'an graph is an independent set.
For any given $k-r$ distinct parts of $T(t-r,t-r-s)$, there are at most $2^s$ distinct copies of $K_{k-r}$ using these parts since each part has at most 2 vertices and there are $s$ parts of size 2 in the Tur\'an graph.
Thus, the number of cliques of order $k-r$ in $T(t-r,t-r-s)$ is at most $\binom{t-r-s}{k-r}\cdot 2^s$.
\end{proof}

\begin{claim} \label{compu: H}
    For any fixed $t$ and $k$ such that $\lambda=t-k \le t/3$, let $f_{t,k}(s)=\binom{t-s}{k}\cdot 2^s$. Then $f_{t,k}(s)$ is strictly decreasing for $s\in [0,\lambda]$. 
\end{claim}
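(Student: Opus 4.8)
The plan is to analyze the ratio of consecutive values $f_{t,k}(s+1)/f_{t,k}(s)$ and show it is strictly less than $1$ for every integer $s$ with $0\le s\le \lambda-1$. Since here $s$ ranges over integers (it counts the number of size-$2$ parts of a Tur\'an graph, as in Lemma \ref{lem: count Ht}), this is precisely the assertion that $f_{t,k}$ is strictly decreasing on $[0,\lambda]$.

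First I would record the elementary identity $\binom{m-1}{k}\big/\binom{m}{k} = (m-k)/m$, valid whenever $\binom{m}{k}\neq 0$. For $s\in\{0,1,\dots,\lambda-1\}$ we have $t-s \ge t-\lambda+1 = k+1 > k$, so with $m=t-s$ the denominator $\binom{t-s}{k}$ is nonzero and
\[
\frac{f_{t,k}(s+1)}{f_{t,k}(s)} \;=\; 2\cdot\frac{\binom{t-s-1}{k}}{\binom{t-s}{k}} \;=\; \frac{2(t-s-k)}{t-s}.
\]
Next I would observe that $\dfrac{2(t-s-k)}{t-s} < 1$ is equivalent to $t-s < 2k$. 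The hypothesis $\lambda=t-k\le t/3$ gives $k\ge 2t/3$, hence $2k\ge 4t/3 > t \ge t-s$ for every $s\ge 0$; in particular $t-s<2k$ holds for all $s\in\{0,1,\dots,\lambda-1\}$. Therefore $f_{t,k}(s+1) < f_{t,k}(s)$ throughout this range, which is the claim.

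There is essentially no obstacle here. The only points that need a little care are checking that the denominator binomial coefficient stays positive on the relevant range (so the ratio is well defined), and invoking $\lambda\le t/3$ at the right moment to guarantee $2k>t$; both are immediate, so the whole argument is a one-line ratio computation.
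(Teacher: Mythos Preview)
Your proof is correct and follows the same approach as the paper: compute the ratio $f_{t,k}(s+1)/f_{t,k}(s)=2(t-s-k)/(t-s)$ and use $\lambda\le t/3$ to conclude it is strictly below $1$. The paper rewrites the numerator as $\lambda-s$ and bounds the ratio by $2\lambda/t<1$, but this is only a cosmetic difference from your equivalent observation $t-s<2k$.
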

\begin{proof}
    For any $s\in [0,\lambda-1]$, we have 
    $
    \dfrac{f_{t,k}(s+1)}{f_{t,k}(s)}=2\cdot \dfrac{t-(s+1)-k+1}{t-s}=2\cdot \dfrac{\lambda-s}{t-s}\le \dfrac{2\lambda}{t}<1.
    $ 
\end{proof}

Now we will proceed to prove Theorem \ref{thm:klarge0}:

\begin{proof}
[Proof of Theorem \ref{thm:klarge0}]
 When $t$ is sufficiently large, we will use the bound in Lemma \ref{lem:very large counting} to bound the maximum number of cliques of order $k$ in a graph without a $K_t$-minor. 
 It is easy to see that \begin{equation} 
    \binom{r_0}{r-1} \left( \frac{\beta t \sqrt{\ln t}}{r_0}  \right)^{r-1} \le \binom{\beta t \sqrt{\ln t}}{r-1}.
    \label{ineq:crude choose}
\end{equation}Applying (\ref{ineq:crude choose}) to Lemma \ref{lem:very large counting},  our goal is to bound the quantity
\[ 
n\cdot r_0^2 \cdot   \left( \max_{(r, r_l): s_M(r,r_l)\le \lambda}
 {r \choose r-r_l}M^{r-r_l}{\beta t\sqrt{\ln t} \choose r_l}  \mathcal{N}_{k-r}(\mathcal{G}_{t-r-s(r,r_l)+1}\cup \mathcal{H}^{t-r-s(r,r_l)}_{t-r}) \right).
\]

Let $s_M(r_l)= \lfloor r_l-1 - 7\cdot  \left(\log_{\frac{1}{1-\epsilon}}d+(8r_l\cdot \log_{\frac{1}{2\epsilon}} M)/{M}\right) \rfloor$. When $t$ is sufficiently large, by Lemma \ref{lem:nonempty Y}, we have $s_M(r,r_l)\ge s_M(r_l) $.
When $\lambda\gg \log t$, if $r\ge 4 \lambda$, we have $s_M(r,r_l)> \lambda$ by Lemma \ref{lem: half branch} which contradicts with Claim \ref{claim:slambda}. Thus, we only consider the term with $r_l< r\le \min \{r_0, 4 \lambda\}$, and in this range, we have
\[
r_l -s_M(r_l)\le 8 \left(\log_{\frac{1}{1-\epsilon}}d+(32\lambda  \log_{\frac{1}{2\epsilon}} M)/{M}\right). 
\] 

 Because $k > (2t + r_0 +2)/3$, we have $k > (2t + r +2)/3$ and then $(k-r) > 2(t-r +1)/3$ for all $r \leq r_0-1$. By Lemma \ref{lem:largekdense}, we have:    
\[ 
 \mathcal{N}_{k-r}(\mathcal{G}_{t-r-s(r,r_l)+1})
 \le {{t-r-s_M(r,r_l)} \choose k-r } +{{t-r-s_M(r,r_l)-1} \choose k-r-1} \le 2{{t-r-s_M(r_l)} \choose k-r }.
\]
Because $(k-r) > 2(t-r +1)/3$ and $r_0\le t/6$ when $t$ is sufficiently large, and $s_M(r,r_l)\ge s_M(r_l) $, by Lemma \ref{lem: count Ht} and Claim \ref{compu: H}, we have
\[ 
 \mathcal{N}_{k-r}(\mathcal{H}^{t-r-s(r,r_l)}_{t-r}))
 \le {{t-r-s_M(r,r_l)}  \choose k-r }\cdot 2^{s_M(r,r_l)} \le {{t-r-s_M(r_l) } \choose k-r }\cdot 2^{s_M(r_l)}. 
\] 

For our convenience, let $r_\lambda=\min \{r_0, 4 \lambda\}$. With the fact that $\binom{cn}{k}\le c^k \binom{n}{k}$ for any integers $n\ge k\ge 0$ and $c\ge 1$, we have the maximum number of cliques of order $k$ in a graph without $K_t$ as a minor is at most

\begin{align*}
     & n\cdot r_0^2 \cdot  \max_{r_l< r\le r_\lambda } \left(
 {r \choose r_l}M^{r-r_l}{t \choose r_l} \cdot (\beta\sqrt{\ln t})^{r_l} \cdot \binom{t-r-s_M(r_l)}{ k-r } \cdot 2^{(1+s_M(r_l))} \right)  \\
  \le & n\cdot r_0^2 \cdot  \max_{r_l< r\le r_\lambda} \left(
 {r_\lambda \choose r_l} \cdot M^{r_\lambda}{t \choose s_M(r_l)+(r_l-s_M(r_l))} \cdot (\beta\sqrt{\ln t})^{r_\lambda} \cdot \binom{t-r-s_M(r_l)}{ (t-r -s_M(r_l))-(k-r)} \cdot 2^{ r_l} \right)\\
 \le & n\cdot r_0^2 \cdot  M^{r_\lambda} \cdot (\ln t)^{r_\lambda} \cdot \max_{r_l< r\le r_\lambda} \left(
 2^{r_\lambda} \cdot {t \choose s_M(r_l)+(r_l-s_M(r_l))} \cdot \binom{t-r-s_M(r_l)}{ \lambda -s_M(r_l)} \cdot 2^{ r_\lambda}  \right) \tag{A}\\
 \le & n\cdot r_0^2 \cdot  (4M)^{r_\lambda} \cdot (\ln t)^{r_\lambda}  \max_{r_l< r_0: s_M(r_l)\le \lambda} \left(
 {t \choose s_M(r_l)}\cdot t^{(r_l-s_M(r_l))} \cdot \binom{t}{ \lambda -s_M(r_l)}  \right) \tag{B}\\
 \le & n\cdot r_0^2 \cdot  (4M)^{r_\lambda} \cdot (\ln t)^{r_\lambda} \cdot \max_{r_l< r_0: s_M(r_l)\le \lambda} \left(   {t \choose s_M(r_l)} \cdot \binom{t}{ \lambda - s_M(r_l) }   \right) \cdot t^{8 \left(\log_{\frac{1}{1-\epsilon}}d+(32\lambda  \log_{\frac{1}{2\epsilon}} M)/{M}\right)}\\
 \le &n\cdot r_0^2  \cdot (4M)^{r_\lambda} \cdot (\ln t)^{r_\lambda}\cdot 2^{\min \{ 2r_0\log_2 t, 4\lambda\}} \cdot {t \choose \lambda} \cdot t^{8 \left(\log_{\frac{1}{1-\epsilon}}d+(32\lambda  \log_{\frac{1}{2\epsilon}} M)/{M}\right)}. \tag{C}
\end{align*}

Inequality (A) holds because  $\beta<1$ and $\binom{n}{k}\le 2^n$ for every $n\ge k\ge 0$, and $(t-r-s_M(r_l))-(k-r)=t-k-s_M(r_l)=\lambda-s_M(r_l)$.
Inequality (B) holds because $\binom{t-r-s_M(r_l)}{ \lambda -s_M(r_l)}=0$ if $s_M(r_l)> \lambda$. Therefore the maximum happens at a value of $r_l$ where $s_M(r_l) \leq \lambda$. It also used the inequality
$ 
 \binom{n}{a+b} 
 \le \binom{n}{a}\cdot n^b 
$.

For inequality (C), we use the following inequality: for any $1 \leq a\le \lambda$,  
$
\binom{t}{\lambda-a} \binom{t}{a}\le (\frac{et}{\lambda-a})^{(\lambda-a)} (\frac{et}{a})^a\le (\frac{e\lambda}{\lambda-a})^{(\lambda-a)} (\frac{e\lambda}{a})^a (\frac{t}{\lambda})^\lambda \le e^{\lambda}e^{\lambda}\binom{t}{\lambda}.
$
Here the last inequality holds because $f(x)=(\frac{en}{x})^x$ is increasing in the range $x\in [1,n]$ and is decreasing when $x\ge n$, and we have $f(x)\le e^n$. Thus, we have ${t \choose s_M(r_l)} \cdot \binom{t}{ \lambda - s_M(r_l) }  \le e^{2\lambda} \binom{t}{\lambda}$. Because $s_M(r_l)\le r_l< r_0$ and because $\binom{n}{k}\le t \binom{n}{k+1}$, we can also show that ${t \choose s_M(r_l)} \cdot \binom{t}{ \lambda - s_M(r_l) }  \le t^{s_M(r_l)} \cdot t^{s_M(r_l)} \binom{t}{\lambda} \le t^{2r_0}\binom{t}{\lambda}$. Thus, we have ${t \choose s_M(r_l)} \cdot \binom{t}{ \lambda - s_M(r_l) }  \le 2^{\min \{ 2r_0\log_2 t, 4\lambda\}} \binom{t}{\lambda}$.

In the rest of this proof, we assume $t$ is sufficiently large.
As $\epsilon< \frac{1}{2}$, we have $r_0^2 \cdot t^{8 \log_{\frac{1}{1-\epsilon}}d} \le t^{9\log_2 t}$. Set $M=(\ln t)^2$, and then we have 
$
    t^{256 \left((\lambda  \log_{\frac{1}{2\epsilon}} M)/{M}\right)}=2^{o(\lambda)}.
$
After setting $M=(\ln t)^2$, we have  $(4M)^{r_\lambda}\cdot (\ln t)^{r_\lambda}\le (\ln t)^{10r_\lambda}=2^{20r_\lambda(\log_2 \ln t)}$.

When $\lambda\ge \frac{1}{2}r_0 \log_2 t$, we have $\lambda\ge r_0/4$ and $20r_\lambda(\log_2 \ln t) = 20r_0(\log_2 \ln t)<r_0 \log_2 t$.
Thus, the bound above is $n\cdot \binom{t}{\lambda}\cdot 2^{4r_0 \log_2 t}$. 
When $\lambda \le \frac{1}{2}r_0 \log_2 t$, we have $\min \{ 2r_0\log_2 t, 4\lambda\}=4\lambda\le \lambda \ln \ln t$.
We also have $20r_\lambda \log_2 \ln t\le 80\log_2 e \cdot\lambda \ln \ln t $.
Thus, the bound above is $n\cdot \binom{t}{\lambda}\cdot 2^{160 \lambda \ln \ln t}\cdot t^{9\log_2 t}$.
Because $\binom{t}{\lambda}=\binom{t}{k}\le t\cdot \binom{t-1}{k}$, the bound above is 
\[
n\cdot \left( \frac{\binom{t-1}{k}+\binom{t-2}{k-1}}{t} \right) \cdot t^2 \cdot t^{9\log_2 t} \cdot 2^{\min \{4r_0 \log_2 t, 160 \lambda \ln \ln t\}}.
\]
Then the Theorem \ref{thm:klarge0} follows as $t^2< t^{\log_2 t}$ when $t$ is sufficiently large.
\end{proof}

\subsection{Asympototic number of $k$-cliques for $k$ in the middle range}

In this subsection, we will give a bound in the following theorem which can prove our main Theorem \ref{thm: summary} for $k$ such that $\min{(k,t-k)} \gg O(t^{1/2}{\log_2}^{5/4} t)$.
For fixed $t$ and $k$, recall that {$T^*_t(k)$ is the Tur\'an graph  $T(2t-\omega-1,\omega)$ maximizing the number of cliques of order $k$  
among all $\omega$ such that $k\le \omega \le t-1$, and $C^*_t(k)$ denoted the number of cliques of order $k$ in $T^*_t(k)$.

\begin{thm}\label{thm:main}

When $t$ is sufficiently large and $\min{(k,t-k)} \gg O(t^{1/2}{\log_2}^{5/4} t)$, the number of cliques of order $k$ in a $K_t$-minor free graph on $n$ vertices is at most 
\[
n  \cdot \frac{C_t^*(k)}{|T^*_t(k)|} \cdot 2^{8t^{1/2}{\log_2}^{5/4} t }.
\]

\end{thm}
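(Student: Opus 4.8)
The plan is to feed the crude container estimate of Corollary~\ref{lem:reduce2} into a single comparison between Tur\'an-graph clique counts at parameters $(t-r,k-r)$ and $(t,k)$. By Corollary~\ref{lem:reduce2}, the number of $k$-cliques in a $K_t$-minor-free graph on $n$ vertices is at most
\[
n\sum_{r=1}^{\min(r_0,k)}\binom{r_0}{r-1}\left(\frac{\beta t\sqrt{\ln t}}{r_0}\right)^{r-1}\mathcal{N}_{k-r}\bigl(\mathcal{G}_{t-r+1}\cap\mathcal{D}\bigr),
\]
and the hypothesis $k\gg t^{1/2}{\log_2}^{5/4}t\ge r_0$ makes $k-r\ge 1$ in every term. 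Thus it suffices to prove (i) $\mathcal{N}_{k-r}(\mathcal{G}_{t-r+1}\cap\mathcal{D})\le \mathrm{poly}(t)\cdot C^*_t(k)$ uniformly for $r\le r_0$, and (ii) $\sum_{r}\binom{r_0}{r-1}(d/r_0)^{r-1}\le 2^{(2+o_t(1))t^{1/2}{\log_2}^{5/4}t}$, where $d=\beta t\sqrt{\ln t}$.

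For (i): if $G\in\mathcal{G}_{t-r+1}$ then $\lfloor(|V(G)|+\omega(G))/2\rfloor\le t-r$ by definition, so $|V(G)|+\omega(G)\le 2(t-r)+1$, and $\omega(G)\ge k-r$ unless $G$ has no $(k-r)$-clique. By Zykov's theorem the number of $(k-r)$-cliques of $G$ is at most that of the Tur\'an graph $T(|V(G)|,\omega(G))$, so $\mathcal{N}_{k-r}(\mathcal{G}_{t-r+1}\cap\mathcal{D})$ is at most the maximum number of $(k-r)$-cliques over all Tur\'an graphs $T(a,b)$ with $a+b\le 2(t-r)+1$ and $b\ge k-r$. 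Given an extremal such $T(a,b)$, adjoin $r$ new parts of size $1$: the resulting complete multipartite graph $H$ has $a+r$ vertices, $b+r\ge k$ parts, budget $(a+r)+(b+r)\le 2t+1$, and (taking the $r$ singleton parts together with a $(k-r)$-clique of $T(a,b)$) at least as many $k$-cliques as $T(a,b)$ has $(k-r)$-cliques; since balanced Tur\'an graphs maximize clique counts among complete multipartite graphs with given numbers of parts and vertices (the convexity used in Zykov's theorem), rebalancing $H$ only increases its $k$-clique count, which is therefore at most $M:=\max\{e_k(T(a',b')):a'+b'\le 2t+1,\ b'\ge k\}$. Finally $M\le\mathrm{poly}(t)\cdot C^*_t(k)$: if the optimizer for $M$ has a part of size $\ge 2$, deleting $O(1)$ vertices from its largest parts yields a configuration of budget $\le 2t-1$ with $\ge k$ parts, which is admissible for $C^*_t(k)=\max\{e_k(T(a',b')):a'+b'\le 2t-1,\ b'\ge k\}$, at the cost of a factor $O(1)$; in the remaining (all-singleton) case the optimizer is $K_{b'}$ with $k\le b'\le t$, so $e_k=\binom{b'}{k}\le\binom{t}{k}\le\bigl(1+\frac{2k}{t-k}\bigr)C^*_t(k)\le\mathrm{poly}(t)\cdot C^*_t(k)$ (using $t-k\gg t^{1/2}{\log_2}^{5/4}t$). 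Hence $\mathcal{N}_{k-r}(\mathcal{G}_{t-r+1}\cap\mathcal{D})\le\mathrm{poly}(t)\cdot C^*_t(k)$ for every $r\le r_0$.

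For (ii): $\sum_{r=1}^{\min(r_0,k)}\binom{r_0}{r-1}(d/r_0)^{r-1}\le(1+d/r_0)^{r_0}\le(2d/r_0)^{r_0}$ since $d/r_0\ge 1$ for $t$ large; plugging in $d=\beta t\sqrt{\ln t}$ and $r_0=4t^{1/2}{\log_2}^{1/4}t$ gives $\log_2(2d/r_0)=(\frac12+o_t(1))\log_2 t$, whence $(2d/r_0)^{r_0}\le 2^{(2+o_t(1))t^{1/2}{\log_2}^{5/4}t}$. Combining (i) and (ii), the $k$-clique count is at most $n\cdot\mathrm{poly}(t)\cdot C^*_t(k)\cdot 2^{(2+o_t(1))t^{1/2}{\log_2}^{5/4}t}$. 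Since $t\le|T^*_t(k)|\le 2t$ directly from the definition of $T^*_t(k)$, the factor $\mathrm{poly}(t)\cdot|T^*_t(k)|$ is absorbed into the exponent for $t$ large, and the bound becomes $n\cdot\frac{C^*_t(k)}{|T^*_t(k)|}\cdot 2^{8t^{1/2}{\log_2}^{5/4}t}$, as required.

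The main obstacle is step (i): the passage from a graph of clique minor at most $t-r$ to a Tur\'an graph of budget $2t-1$ requires reconciling the $\pm O(1)$ discrepancy in the vertex/part budget (handled by deleting $O(1)$ vertices, with a harmless $\mathrm{poly}(t)$ loss in the degenerate all-clique case), re-balancing the parts after adjoining the $r$ singletons (justified by the optimality of balanced Tur\'an graphs), and keeping every intermediate multipartite configuration admissible ($k\le\omega\le t-1$). By contrast, the refined stopping-size machinery of Key Lemma~\ref{lem:very large counting} (the quantity $s_M(r,r_l)$) is not needed here, since the slack in the target $2^{8t^{1/2}{\log_2}^{5/4}t}$ comfortably absorbs both the loss of the factor $|T^*_t(k)|\le 2t$ and the whole sum $\sum_r\binom{r_0}{r-1}(d/r_0)^{r-1}$; it becomes indispensable only in the complementary regime $k\ge 2t/3+\tilde O(t^{1/2})$ of Theorem~\ref{thm:klarge0}, where $C^*_t(k)$ is itself of order $2^{\Theta(t^{1/2}{\log_2}^{5/4}t)}$. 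The hypothesis $\min(k,t-k)\gg t^{1/2}{\log_2}^{5/4}t$ is used only to guarantee $k-r\ge 1$ throughout and, downstream in Theorem~\ref{thm: summary}, to ensure $2^{8t^{1/2}{\log_2}^{5/4}t}=(C^*_t(k)/|T^*_t(k)|)^{o_t(1)}$.
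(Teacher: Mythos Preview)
Your proposal is correct and follows essentially the same route as the paper: apply Corollary~\ref{lem:reduce2}, bound the encoding sum by $(1+d/r_0)^{r_0}=2^{(2+o(1))t^{1/2}\log_2^{5/4}t}$, and reduce $\mathcal{N}_{k-r}(\mathcal{G}_{t-r+1}\cap\mathcal{D})$ to $\mathrm{poly}(t)\cdot C^*_t(k)$ by adjoining universal vertices. The only substantive difference is bookkeeping in that last reduction: the paper adjoins $r-1$ universal vertices, which lands exactly at budget $2t-1$ and gives $\mathcal{N}_{k-r}(\mathcal{G}_{t-r+1}\cap\mathcal{D})\le \mathcal{N}_{k-1}(\mathcal{G}_t\cap\mathcal{D})\le C^*_t(k-1)$, and then proves the separate lemma $C^*_t(k-1)\le 4t^2\,C^*_t(k)$. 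You adjoin $r$ singleton parts, overshoot to budget $2t+1$, and then trim two vertices. Both work; the paper's shift-by-$(r-1)$ is cleaner because it avoids the case analysis of your trimming step.

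One small imprecision to fix: your claim that trimming costs ``a factor $O(1)$'' is not quite right in the edge case where exactly one part has size $2$, the rest are singletons, and $b'=t$ (i.e.\ $T(t+1,t)$). There you must delete one vertex from the size-$2$ part and then a singleton, landing at $K_{t-1}$; the ratio $e_k(T(t+1,t))/e_k(K_{t-1})=1+2k/(t-k)$ is $\mathrm{poly}(t)$ (using $t-k\gg t^{1/2}$) but not $O(1)$. Since you only need $\mathrm{poly}(t)$ overall this is harmless, but the sentence should read ``at the cost of a factor $\mathrm{poly}(t)$''. Also, the rebalancing step is justified by Schur-concavity of elementary symmetric polynomials (Maclaurin's inequality), not literally ``the convexity used in Zykov's theorem'', though the distinction is cosmetic.
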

\begin{rem}
We will check in the next corollary that this bound is asymptotically sharp up to multiplicative error $2^{O(t^{1/2}{\log_2}^{5/4} t) }$ for every $k$ in this range when considering disjoint copies of the graph $T_t^*(k)$ which will be proved to be $K_t$-minor free in the next the Proposition \ref{prop:strucmiddle}.
\end{rem}
\begin{rem}
     This bound is true for $k$ in any range. However, when $k$ is too large or too small, the error term in this bound will be much larger than the main term.
\end{rem}

To prove Theorem \ref{thm:main} from Corollary \ref{lem:reduce2} in this range of $k$, we need 
to get a better understanding of the bound $\mathcal{N}_{k}(\mathcal{G}_{t}\cap \mathcal{D} )$, we will prove the following proposition which shows the graph in $\mathcal{G}_{t}$ that achieved the maximum number of $k$-cliques is a $K_t$-minor free Tur\'an graph $T^*_t(k)$. This directly implies Proposition \ref{prop:Tstarstructure}.

\begin{prop}\label{prop:strucmiddle}
 Among all the graphs $G\in \mathcal{G}_t$, the one that maximizes the number of cliques of order $k$  is the Tur\'an graph $T^*_t(k)$. Thus,  $\mathcal{N}_k(\mathcal{G}_t \cap \mathcal{D} ) \le C^*_t(k)$. In addition, $T^*_t(k)$ is $K_t$-minor free. Quantitatively, we have the following bound of $C^*_t(k)$:

 \[\binom{t-1}{k} \max\left(1,  \left( 2- 4\sqrt{2k/t}\right)\right)^k \leq  C^*_t(k) \leq \binom{t-1}{k} 2^k. \]
 Moreover, the number of parts $\omega$ in $T^*_t(k)$ is bounded by $\sqrt{tk}/4\le \omega \le 10 \sqrt{tk}$.
\end{prop}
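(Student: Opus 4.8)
The plan is to establish the four assertions in turn.

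\emph{Reduction to Tur\'an graphs.} By definition $G\in\mathcal G_t$ iff $|V(G)|+\omega(G)\le 2t-1$. Given such a $G$ with $n=|V(G)|$ vertices, it is $K_{\omega(G)+1}$-free with $\omega(G)\le 2t-1-n$, hence $K_{2t-n}$-free, so Zykov's theorem \cite{zykov} gives $\mathcal N_k(\{G\})\le\mathcal N_k(\{T(n,\,2t-1-n)\})$; note $T(n,2t-1-n)\in\mathcal G_t$ whenever $2t-1-n\le n$, i.e. $n\ge t$ (when $n\le t-1$ the relevant extremal graph is $K_n$, contributing only $\binom nk\le\binom{t-1}k$ cliques, dominated by $T(t,t-1)=K_t^-$). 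Moreover, for a fixed number of parts $\omega$, deleting one vertex of a largest part of $T(n+1,\omega)$ yields exactly $T(n,\omega)$, so $T(n,\omega)$ is a subgraph of $T(n+1,\omega)$ and $\mathcal N_k(\{T(n,\omega)\})$ is nondecreasing in $n$. Thus among the admissible pairs $(n,\omega)$ with $n+\omega\le 2t-1$ and $n\ge\omega$ it suffices to take $n=2t-1-\omega$, which forces $\omega\le t-1$; and $\omega\ge k$ is needed for a $k$-clique to exist at all. Therefore $\max_{G\in\mathcal G_t}\mathcal N_k(\{G\})=\max_{k\le\omega\le t-1}\mathcal N_k(\{T(2t-1-\omega,\omega)\})=C^*_t(k)$, attained by $T^*_t(k)$, and in particular $\mathcal N_k(\mathcal G_t\cap\mathcal D)\le\mathcal N_k(\mathcal G_t)\le C^*_t(k)$.

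\emph{$K_t$-minor freeness.} I would prove the stronger statement that every complete multipartite graph $H$ on $n$ vertices with $\omega$ parts has no clique minor of order exceeding $\lfloor(n+\omega)/2\rfloor$. Fix branch sets $B_1,\dots,B_h$ of a $K_h$-minor. Each part of $H$ is independent, so any $B_i$ contained in a single part is a single vertex, and two such singletons in the same part would be nonadjacent; hence the singleton branch sets lie in distinct parts, so there are at most $\omega$ of them. Writing $h_1$ for the number of singleton branch sets and $h_2=h-h_1$ for the rest (each with at least $2$ vertices), a vertex count gives $n\ge h_1+2h_2=2h-h_1\ge 2h-\omega$, i.e. $h\le(n+\omega)/2$. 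Since $T^*_t(k)=T(2t-1-\omega,\omega)$ has $n+\omega=2t-1$, its largest clique minor has order at most $\lfloor(2t-1)/2\rfloor=t-1<t$.

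\emph{Quantitative bounds.} Write $T^*_t(k)=T(n,\omega)$, $n=2t-1-\omega$, with part sizes $n_1,\dots,n_\omega$, so $C^*_t(k)=e_k(n_1,\dots,n_\omega)$, the $k$-th elementary symmetric polynomial. Maclaurin's inequality gives $e_k(n_1,\dots,n_\omega)\le\binom\omega k(n/\omega)^k$; since $\omega\le t-1$ we have $\tfrac{\omega-i}{t-1-i}\le\tfrac\omega{t-1}$ for $0\le i<k$, hence $\binom\omega k\le\binom{t-1}k\bigl(\omega/(t-1)\bigr)^k$, so $C^*_t(k)\le\binom{t-1}k\bigl(n/(t-1)\bigr)^k\le\binom{t-1}k2^k$ using $n\le 2(t-1)$. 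For the lower bound, $C^*_t(k)\ge\mathcal N_k(\{T(t,t-1)\})=\binom{t-1}k+\binom{t-2}{k-1}\ge\binom{t-1}k$, which already gives the claim whenever $2-4\sqrt{2k/t}\le1$; in the complementary range one tests $\omega_0$ near $\sqrt{kt}$ and uses $C^*_t(k)\ge\binom{\omega_0}k\lfloor(2t-1-\omega_0)/\omega_0\rfloor^k$ together with $\binom{\omega_0}k\ge\binom{t-1}k\bigl(\tfrac{\omega_0-k+1}{t-k}\bigr)^k$; the resulting bracketed base, optimized over $\omega_0$, exceeds $2-4\sqrt{2k/t}$ (the $\sqrt2$ over the idealized optimum $2-4\sqrt{k/t}$ absorbing the floors and lower-order corrections). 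These are the routine computations carried out in Appendix \ref{section:stru} (cf.\ Claim \ref{claim:simpleckbound}, Lemma \ref{lem:ckapprox}).

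\emph{Location of $\omega$, and the main obstacle.} Setting $\phi(\omega)=\mathcal N_k(\{T(2t-1-\omega,\omega)\})$ and sandwiching $\binom\omega k\lfloor(2t-1-\omega)/\omega\rfloor^k\le\phi(\omega)\le\binom\omega k\bigl((2t-1-\omega)/\omega\bigr)^k$, one checks via the logarithmic difference quotient of the upper bound that it increases until $\omega$ is of order $\sqrt{2kt}$ and decreases afterwards, and a direct comparison with $\phi(\lceil\sqrt{kt}\rceil)$ rules out $\omega<\sqrt{kt}/4$ and $\omega>10\sqrt{kt}$; when $k\ge 2t/3$ this is subsumed by Lemma \ref{lem:largekdense} (with $t-1$ in place of $t$), which gives $T^*_t(k)=T(t,t-1)$, and $\sqrt{kt}/4\le t-1\le 10\sqrt{kt}$ holds in that range. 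The clean steps are the reduction to Tur\'an graphs and the Hadwiger-number estimate; the genuine work is this last paragraph together with the lower bound in the previous one — extracting the explicit constants $1/4$ and $10$ and the factor $(2-4\sqrt{2k/t})^k$ out of $\binom\omega k(n/\omega)^k$ — which is elementary but demands careful handling of the floors and of the ratio $\binom\omega k/\binom{t-1}k$.
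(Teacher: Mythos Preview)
Your proposal is correct and follows the paper's overall outline: Zykov reduction to balanced complete multipartite graphs, saturation of the constraint $n+\omega=2t-1$ (your monotonicity-in-$n$ argument is the content of Claim~\ref{claim:full}), the branch-set count for $K_t$-minor-freeness (identical to Lemma~\ref{lem: kt minor free}), and then analytic estimates for $C^*_t(k)$ and $\omega$.

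Two points of divergence are worth noting. First, your Maclaurin-inequality route to $C^*_t(k)\le\binom{t-1}{k}2^k$, together with the ratio bound $\binom{\omega}{k}\le\binom{t-1}{k}(\omega/(t-1))^k$, is cleaner than the paper's Claim~\ref{claim:simpleckbound}, which instead writes the count as $\sum_i\binom{x}{i}\binom{y}{k-i}a^i(a+1)^{k-i}$, bounds it by $\binom{x+y}{k}(a+1)^k$, and then shows $\binom{\lfloor(2t-1)/(a+1)\rfloor}{k}(a+1)^k$ is monotone decreasing in $a$. Second, and more substantively, for the lower bound $\omega\ge\sqrt{tk}/4$ the paper does \emph{not} argue via near-unimodality of $\phi(\omega)$ as you propose; it instead proves a structural lemma (Lemma~\ref{lem:smalla}) by a splitting argument --- replacing a part of size $a_1\ge3$ by two parts summing to $a_1-1$ and showing this would strictly increase the $K_k$-count unless $(a_1-1)^2<(4n-3k+7-4a_1)/(k-1)$ --- which bounds the largest part size directly and hence $\omega\ge n/a_1$ from below. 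Your direct-comparison route is viable, but the paper's splitting lemma has the side benefit of certifying that $T^*_t(k)$ is itself dense (so that in fact $\mathcal N_k(\mathcal G_t\cap\mathcal D)=C^*_t(k)$ for $k\ge25$, not merely $\le$).
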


 The following corollary will imply the Main Theorem \ref{thm: summary} when $\min(k,t-k) \gg O(t^{1/2}{\log_2}^{5/4} t) $.

\begin{cor}[Corollary of Theorem \ref{thm:main}]\label{cor:main} 

Suppose $\min{(k,t-k)} \gg O(t^{1/2}{\log_2}^{5/4} t)$. The number of cliques of order $k$ in a $K_t$-minor free graph on $n$ vertices is at most 
$
 n  \cdot \left( \frac{C_t^*(k)}{|T^*_t(k)|}\right)^{(1+o_t(1))}.
$

\end{cor}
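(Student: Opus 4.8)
The plan is to deduce the corollary directly from Theorem \ref{thm:main} together with the quantitative estimates in Proposition \ref{prop:strucmiddle}. Theorem \ref{thm:main} already supplies the upper bound $n\cdot \frac{C_t^*(k)}{|T_t^*(k)|}\cdot 2^{8t^{1/2}{\log_2}^{5/4}t}$, so the only thing left to check is that the multiplicative error factor $2^{8t^{1/2}{\log_2}^{5/4}t}$ is negligible against the main term, i.e. that
\[
2^{8t^{1/2}{\log_2}^{5/4}t}=\left(\frac{C_t^*(k)}{|T_t^*(k)|}\right)^{o_t(1)}.
\]
Granting this, multiplying the two contributions collapses the factor $2^{8t^{1/2}{\log_2}^{5/4}t}$ into the exponent and produces exactly $n\cdot\left(C_t^*(k)/|T_t^*(k)|\right)^{1+o_t(1)}$.

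To establish the displayed identity I would first lower bound the main term. By Proposition \ref{prop:strucmiddle} one has $C_t^*(k)\ge\binom{t-1}{k}$, while $T_t^*(k)$, being a Tur\'an graph $T(2t-\omega-1,\omega)$ with $\omega\le t-1$, has at most $2t$ vertices; hence $C_t^*(k)/|T_t^*(k)|\ge \binom{t-1}{k}/(2t)$. Writing $m:=\min(k,\,t-1-k)\le (t-1)/2$, the elementary bound $\binom{t-1}{m}\ge\left(\frac{t-1}{m}\right)^m\ge 2^m$ gives $\binom{t-1}{k}\ge 2^m$. The hypothesis $\min(k,t-k)\gg t^{1/2}{\log_2}^{5/4}t$ forces $m\ge\min(k,t-k)-1\gg t^{1/2}{\log_2}^{5/4}t$, so
\[
\log_2\!\left(\frac{C_t^*(k)}{|T_t^*(k)|}\right)\ \ge\ m-1-\log_2 t\ \gg\ t^{1/2}{\log_2}^{5/4}t .
\]
Dividing, $(8t^{1/2}{\log_2}^{5/4}t)/\log_2(C_t^*(k)/|T_t^*(k)|)\to 0$ as $t\to\infty$, which is exactly the claim; feeding it back into Theorem \ref{thm:main} completes the upper bound.

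For the matching lower bound --- needed so that Corollary \ref{cor:main} implies the sharpness half of Theorem \ref{thm: summary} in this range --- I would invoke that $T_t^*(k)$ is $K_t$-minor free, as asserted in Proposition \ref{prop:strucmiddle}: the disjoint union of $\lfloor n/|V(T_t^*(k))|\rfloor$ copies of $T_t^*(k)$ is $K_t$-minor free, has at least $\frac{1}{2}\,n\cdot C_t^*(k)/|T_t^*(k)|$ cliques of order $k$ whenever $n\ge 2t$, and since $C_t^*(k)/|T_t^*(k)|\ge 2^{t^{1/2}}$ for large $t$ the harmless constant $\frac{1}{2}$ is itself $\left(C_t^*(k)/|T_t^*(k)|\right)^{-o_t(1)}$. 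The argument is essentially bookkeeping; the one place that genuinely uses the full strength of $\min(k,t-k)\gg t^{1/2}{\log_2}^{5/4}t$ is the final division --- a slower growth rate would leave $2^{8t^{1/2}{\log_2}^{5/4}t}$ comparable to or larger than the main term and the conclusion would break. So the real obstacle is not in this corollary at all, but upstream, in Theorem \ref{thm:main} and Proposition \ref{prop:strucmiddle}, which we take as given.
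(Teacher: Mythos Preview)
Your proposal is correct and follows essentially the same route as the paper: both start from Theorem \ref{thm:main}, lower-bound $C_t^*(k)/|T_t^*(k)|$ by $\binom{t-1}{k}/(2t)$ via Proposition \ref{prop:strucmiddle}, and then verify that the error factor $2^{8t^{1/2}{\log_2}^{5/4}t}$ is $\bigl(C_t^*(k)/|T_t^*(k)|\bigr)^{o_t(1)}$ using the hypothesis on $\min(k,t-k)$. Your direct estimate $\binom{t-1}{k}\ge 2^{m}$ with $m=\min(k,t-1-k)$ is in fact a bit cleaner than the paper's detour through $\binom{t}{m}^{o_t(1)}$, and your added paragraph on the matching lower bound (disjoint copies of $T_t^*(k)$) is correct and handled elsewhere in the paper.
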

\begin{proof}[Proof of Corollary assuming Theorem \ref{thm:main} and Proposition \ref{prop:strucmiddle}]
By the definition of $T_t^*(k)$, we have $|T^*_t(k)|\le 2t$. Furthermore, $C^*_t(k) \ge \binom{t-1}{k}$ by Proposition \ref{prop:strucmiddle}. Thus, ${C^*_t(k)}/{|T^*_t(k)|}\ge {\binom{t-1}{k}}/{2t} $.
   Because $\min(t,t-k) \gg \log_2 t$, we have $\binom{t-1}{k}\ge \binom{t}{k}/t >\binom{t}{\log_2 t}/t \ge \sqrt{t}^{\log_2 t}$. Then we have 
    $\binom{t}{k}\le 2t^2\cdot (\frac{\binom{t-1}{k}}{2t})  \le  (\frac{\binom{t-1}{k}}{2t})^{1+o_t(1)}$. 
    Let $m=\min (k,t-k)$. As $m\le t/2$, we have $2^{8t^{1/2}{\log_2}^{5/4} t }=2^{mo_t(1)}\le 2^{m(\log_2 t -\log_2 m)o_t(1)}=\binom{t}{m}^{o_t(1)}=\binom{t}{\lambda}^{o_t(1)}$. Thus, we have $2^{8t^{1/2}{\log_2}^{5/4} t }=\left(\frac{\binom{t-1}{k}}{2t} \right)^{o_t(1)}\le \left( \frac{C_t^*(k)}{|T^*_t(k)|}\right)^{o_t(1)}$. We can finish the proof by applying this inequality to the Theorem \ref{thm:main}.
\end{proof}

\subsubsection{Proof of Proposition \ref{prop:strucmiddle}}

Recall that ``dense" in our application means the condition as in Lemma \ref{lem:densehnumber} is satisfied, i.e., when the maximum missing degree $\BD$ in a graph $G$ is such that
\begin{equation} |V(G)| \geq  \omega(G) + 2 \BD^2 + 2 \text{ or } \BD \leq 1. \label{eqn:dense}
\end{equation}

Recall that $\mathcal{G}_t$ is the family of graphs $G$ such that $\lfloor \frac{|V(G)|+\omega(G)}{2} \rfloor \leq t-1$. Note that there can be graphs that are not dense but also belong to $\mathcal{G}_t$.
{We will call $G$ an \textit{optimizer in $\mathcal{G}_t$} if it maximizes the number of $k$-cliques among all graphs in $\mathcal{G}_t$.}
Let a \emph{balanced complete multipartite graph} be a complete multipartite graph where the orders of each part differ by at most 1. Clearly all Tur\'an graphs are balanced complete multipartite graphs.

The proof goes as follows. In the whole subsection, we always assume $t$ is large. 
\begin{enumerate}
\item We first show that the optimizer $G$ in $\mathcal{G}_t$ is given by Tur\'an graphs, which are complete multipartite graphs where the orders of different parts differ by at most 1.  (Lemma \ref{lem:bcm}). Furthermore, we show $|V(G)| + \omega(G) = 2t-1$ and $G=T^*_t(k)$. (Claim \ref{claim:full}). 
\item Next, we will show that every balanced complete multipartite graph satisfying $|V(G)| + \omega(G) = 2t-1$ 
is $K_t$-minor free. (Lemma \ref{lem: kt minor free}).

\item To illustrate the structure of the optimizer Tur\'an graph $G$, we prove an upper bound of the maximum missing degree of $G$. (Lemma \ref{lem:smalla}).

\item If $T_t^*(k)$ is the optimizer, we obtain asymptotically the value for $C_t^*(k)$ by proving a simple upper bound (Claim  \ref{claim:simpleckbound}) and constructing a  lower bound (Lemma \ref{lem:ckapprox}).
\item By further comparing the number of $k$-cliques in $T(n,w)$ to the upper and lower bounds for $C_t^*(k)$ as mentioned above, we are able to determine asymptotically the number of parts in $T_t^*(k)$ (Lemma \ref{lem:tstar}). 
\end{enumerate}

Fix $n, \omega$ such that $\lfloor (n + \omega)/2 \rfloor \leq t-1$. Zykov's theorem \cite{zykov} states that the graph on $n$ vertices without $K_{\omega+1}$-subgraph and with the most number of $k$-cliques is achieved by the Tur\'an graph $T(n, \omega)$, which is a balanced complete multipartite graph on $n$ vertices and with $\omega$ parts. Therefore we have the following simple lemma.

\begin{lem}\label{lem:bcm}
For all graphs $G \in \mathcal{G}_t$, the ones with the maximum number of cliques of order $k$ are balanced complete multipartite graphs.
\end{lem}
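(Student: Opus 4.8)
The plan is to reduce the statement directly to Zykov's theorem \cite{zykov}, which is already quoted in the excerpt. Let $G^{*}\in\mathcal{G}_t$ be a graph maximizing the number of cliques of order $k$, and write $n=|V(G^{*})|$ and $\omega=\omega(G^{*})$; by the definition of $\mathcal{G}_t$ we have $\lfloor (n+\omega)/2\rfloor\le t-1$. Since the largest clique of $G^{*}$ has order $\omega$, the graph $G^{*}$ is $K_{\omega+1}$-free, so Zykov's theorem applies with parameters $n$ and $\omega$: the Tur\'an graph $T(n,\omega)$ contains at least as many $k$-cliques as $G^{*}$.

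The only point that needs checking is that $T(n,\omega)$ still lies in $\mathcal{G}_t$. Because $G^{*}$ contains a copy of $K_{\omega}$ we have $n\ge\omega$, so every one of the $\omega$ parts of $T(n,\omega)$ is nonempty, whence the clique number of $T(n,\omega)$ is exactly $\omega$, and of course $|V(T(n,\omega))|=n$. Therefore $\lfloor (|V(T(n,\omega))|+\omega(T(n,\omega)))/2\rfloor=\lfloor (n+\omega)/2\rfloor\le t-1$, i.e. $T(n,\omega)\in\mathcal{G}_t$. Combined with the previous paragraph, $T(n,\omega)$ is itself an optimizer in $\mathcal{G}_t$, and it is a balanced complete multipartite graph by construction. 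If one wants every optimizer (not merely one of them) to be balanced complete multipartite, one invokes the uniqueness part of Zykov's theorem: for $\omega\ge k\ge 2$ the Tur\'an graph is the unique $K_{\omega+1}$-free graph on $n$ vertices with the maximum number of $k$-cliques, so any optimizer with vertex count $n$ and clique number $\omega$ must equal $T(n,\omega)$ (the case $k>\omega$ gives zero $k$-cliques and is excluded once the maximum is positive).

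There is essentially no obstacle here: the lemma is a one-line consequence of Zykov's theorem, the only subtlety being the bookkeeping that passing from $G^{*}$ to $T(n,\omega)$ changes neither the vertex count nor the clique number, and hence keeps us inside $\mathcal{G}_t$. The genuine work — pinning down \emph{which} balanced complete multipartite graph is optimal, i.e. showing $|V(G)|+\omega(G)=2t-1$ and determining the number of parts — is deferred to the later Claim \ref{claim:full} and Lemmas \ref{lem:smalla} and \ref{lem:tstar}.
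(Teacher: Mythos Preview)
Your proof is correct and follows essentially the same approach as the paper: both arguments fix the vertex count $n$ and clique number $\omega$ of an optimizer and invoke Zykov's theorem to replace it by $T(n,\omega)$. You are slightly more explicit than the paper in verifying that $T(n,\omega)$ remains in $\mathcal{G}_t$ (checking $n\ge\omega$ so that $\omega(T(n,\omega))=\omega$), but this is routine bookkeeping the paper leaves implicit.
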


By Lemma \ref{lem:bcm}, to determine the optimizer in $\mathcal{G}_t$, we only need to consider balanced complete multipartite graphs. Suppose the optimizer $G$ is a graph with $l$ parts and the parts $A_1,\dots,A_l$ have orders $a_1, \dots, a_l$ respectively. Thus $\omega(G)=l$. Also, because $K_t^-\in \mathcal{G}_t$ contains some $k$-clique, we need $l=\omega(G)\ge k$. Otherwise, $G$ does not contain any $k$-cliques which contradicts the definition of the optimizer. 

\begin{claim}\label{claim:full}
The graph $G \in \mathcal{G}_t$ which maximizes the number of $k$-cliques satisfies
$ \sum_{i=1}^l a_i + l = 2t-1$.
\end{claim}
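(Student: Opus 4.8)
The plan is to show that any maximizer $G$ saturates the constraint defining membership in $\mathcal{G}_t$. By Lemma~\ref{lem:bcm} we may take $G$ to be a balanced complete multipartite graph; write $A_1,\dots,A_l$ for its parts, $a_i=|A_i|$, and $n=\sum_{i=1}^l a_i=|V(G)|$, so that $\omega(G)=l$. Then $G\in\mathcal{G}_t$ is equivalent to $\lfloor (n+l)/2\rfloor\le t-1$, which for the integer $n+l$ is equivalent to $n+l\le 2t-1$. It therefore suffices to rule out the possibility $n+l\le 2t-2$, and the natural way to do this is a local move: if the budget is not used up, add a vertex and show the number of $k$-cliques strictly increases while the graph stays in $\mathcal{G}_t$.

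Concretely, suppose $n+l\le 2t-2$ and form $G'$ from $G$ by adding one new vertex to an arbitrary part $A_i$. Then $|V(G')|=n+1$ while $\omega(G')=\omega(G)=l$ (adding a vertex to an existing part changes neither the number of parts nor the clique number), so $|V(G')|+\omega(G')=n+l+1\le 2t-1$ and hence $G'\in\mathcal{G}_t$; note $G'$ need not be balanced, but that is irrelevant here. The number of $k$-cliques of a complete multipartite graph with part sizes $b_1,\dots,b_l$ equals $\sum_{S\subseteq[l],\,|S|=k}\prod_{j\in S}b_j$, so passing from $G$ to $G'$ replaces $a_i$ by $a_i+1$ and increases this sum by exactly $\sum_{S\ni i,\,|S|=k}\prod_{j\in S\setminus\{i\}}a_j$. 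This increment is strictly positive: every $a_j\ge 1$, and since a maximizer must contain at least one $k$-clique (as $K_t^-\in\mathcal{G}_t$ does), we have $l=\omega(G)\ge k$, so there is at least one $k$-element subset of $[l]$ through $i$. Thus $G'$ has strictly more $k$-cliques than $G$, contradicting the choice of $G$. Hence $n+l=2t-1$, i.e.\ $\sum_{i=1}^l a_i+l=2t-1$.

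I do not expect any real obstacle in this argument; the only points needing a moment's care are the elementary equivalence $\lfloor (n+l)/2\rfloor\le t-1\iff n+l\le 2t-1$ and the inequality $l\ge k$, the latter already recorded in the paragraph preceding the claim. Everything else is a one-line count of $k$-cliques in complete multipartite graphs, together with the trivial observation that enlarging a part keeps the clique number unchanged.
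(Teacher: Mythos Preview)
Your proof is correct and follows essentially the same approach as the paper: both assume the constraint is not saturated, add one vertex to a part to form $G'\in\mathcal{G}_t$, and use $l\ge k$ to conclude that the number of $k$-cliques strictly increases, yielding a contradiction. Your version is slightly more explicit about the formula for the increment and correctly notes that $G'$ need not be balanced, but otherwise the arguments coincide.
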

\begin{proof}

For $G\in \mathcal{G}_t$, we need $\lfloor (n+\omega)/2 \rfloor \leq t-1$. In this case, $n = \sum a_i$ and $\omega(G)=l$. Thus the condition is equivalent to $\lfloor ( \sum_{i=1}^l a_i + l)/2 \rfloor  \leq t-1$.
Therefore $ \sum_{i=1}^l a_i + l \leq 2(t-1)+1$. 
 Suppose the claim does not hold, i.e., $ \sum_{i=1}^l a_i + l < 2(t-1)+1$. We will produce a new graph $G'$ in $\mathcal{G}_t$ but with more $k$-cliques. Let $G'$ be given by $a_1' = a_1 + 1, a_i' = a_i$ for $1 < i \leq l$. In this case, 
  $(\sum_{i=1}^l a_i' + l )/2< (2(t-1)+1 + 1)/2 = t.$
Since $t$ is an integer, 
$\lfloor (\sum_{i=1}^l a_i' + l )/2 \rfloor \leq t-1.$
This means $G' \in \mathcal{G}_t$. While on the other hand, {because $l\ge k$}, the number of cliques of order $k$ in $G'$ is strictly larger than the value for $G$, a contradiction.
\end{proof}

This claim shows that the optimizer $G$ in $\mathcal{G}_t$ is a Tur\'an graph $T(2t-l-1,l)$ for some $l\le t-1$. Thus, we have $G=T^*_t(K)$. Next, we show $G$ is $K_t$-minor free by the following lemma.  

\begin{lem} \label{lem: kt minor free}
For any $t\ge 2$ and $l\le t-1$, the Tur\'an graph $T(2t-l-1,l)$ does not contains a $K_t$-minor.    
\end{lem}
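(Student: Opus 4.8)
The plan is to apply Lemma \ref{lem:densehnumber} directly, since $T(2t-l-1,l)$ is a complete multipartite graph whose complement is a disjoint union of cliques (one per part), hence has bounded maximum degree. First I would compute the three relevant parameters: the graph has $n = 2t-l-1$ vertices, clique number $\omega = l$ (one vertex from each part), and maximum missing degree $\BD = a_{\max} - 1$ where $a_{\max} = \lceil (2t-l-1)/l \rceil$ is the size of the largest part. The target conclusion is that the largest clique minor has order $\lfloor (n+\omega)/2 \rfloor = \lfloor (2t-l-1+l)/2 \rfloor = \lfloor (2t-1)/2 \rfloor = t-1 < t$, which is exactly what we want — provided the graph satisfies the density hypothesis of Lemma \ref{lem:densehnumber}.

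So the crux is verifying the density condition: either $\BD \le 1$, or $n \ge \omega + 2\BD^2 + 2$. I would split into cases on the part size. If every part has size at most $2$, then $\BD \le 1$ and we are immediately done. Otherwise the largest part has size $a_{\max} \ge 3$, so $\BD = a_{\max}-1 \ge 2$; here I need $n - \omega = (2t-l-1) - l = 2t - 2l - 1 \ge 2(a_{\max}-1)^2 + 2$. Since the parts are balanced, $a_{\max} \le (2t-l-1)/l + 1 = (2t-1)/l$, so it suffices to show $2t - 2l - 1 \ge 2\big((2t-1)/l\big)^2$ roughly, i.e. $l^2(2t-2l-1) \gtrsim 2(2t-1)^2$. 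This inequality can fail for small $l$ relative to $t$ — for instance when $l$ is around $\sqrt{t}$ and parts have size around $\sqrt{t}$, both sides are of order $t^2$ and one must check the constant carefully. This is the step I expect to be the main obstacle: for the regime where parts are large (small $l$), the crude bound from Lemma \ref{lem:densehnumber} may not apply, and one needs a more hands-on minor argument.

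To handle the regime where Lemma \ref{lem:densehnumber} does not apply directly (large parts, small $l$), I would argue by hand that no $K_t$-minor exists. A $K_t$-minor in $T(2t-l-1,l)$ would consist of $t$ pairwise-adjacent connected branch sets; since the total number of vertices is $2t-l-1$, the average branch set size is $(2t-l-1)/t < 2$, so at least $l+1$ branch sets are singletons. But any two singleton branch sets must be adjacent in $T(2t-l-1,l)$, hence lie in distinct parts; since there are only $l$ parts, there can be at most $l$ pairwise-adjacent singletons — contradiction. This pigeonhole argument is clean and in fact covers \emph{all} cases uniformly, so I would likely present just this direct argument rather than invoking Lemma \ref{lem:densehnumber}: a $K_t$-minor needs $t$ branch sets in $2t-l-1$ vertices, forcing more than $l$ of them to be single vertices, and pairwise adjacency of singletons forces them into distinct parts, of which there are only $l$. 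One small subtlety to check is the edge case $l \geq t-1$ combined with very small $t$, but here $n = 2t-l-1 \leq t$ and the bound $\omega(T(2t-l-1,l)) = \min(l, 2t-l-1) \leq t-1$ already rules out even a $K_t$-subgraph, let alone a minor.
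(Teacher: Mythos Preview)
Your final direct argument is correct and is essentially identical to the paper's proof: a $K_t$-minor in $2t-l-1$ vertices forces at least $l+1$ singleton branch sets, which must be pairwise adjacent and hence lie in distinct parts, contradicting the fact that there are only $l$ parts. The paper phrases this from the other direction (at most $l$ singletons, hence at least $2t-l$ vertices needed), but it is the same pigeonhole count. Your initial detour through Lemma~\ref{lem:densehnumber} is unnecessary and, as you yourself observe, does not cover all cases; you can simply drop it and present the direct argument alone.
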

\begin{proof}
   We will prove this statement by contradiction. Suppose the statement is not true, then there exists $t\ge 2$ and $l\le t-1$ such that $G=T(2t-1-l,l)$ contains a $K_t$-minor. Let the vertex set of this $K_t$-minor be $\{v_1,v_2,\cdots,v_s\}\cup B_1\cup B_2 \cup \cdots \cup B_{t-s} $ where $v_i\in V(G)$ and $|B_i|\ge 2$ and $B_i$ is contracted to be a vertex in this $K_t$-minor. For every $i,j\in [s]$ with $i\neq j$, we have $v_iv_j\in E(G)$. Thus, $v_i$ and $v_j$ can not belong to the same part of $G=T(2t-l-1,l)$ as each part of a Tur\'an graph is an independent set. Thus, we have $s\le l$ and the $K_t$-minor has at least $s+2(t-s)=2t-s\ge 2t-l$ vertices which contradicts with $|V(G)|\le 2t-l-1$.    
\end{proof}

To find the optimizer in $\mathcal{G}_t$, we are doing the following integer optimization problem to find solutions $\{a_i\}_{i\le l}, l$: 
\begin{align}
\max  \sum_{1\leq i_1 < \dots < i_k \leq l} a_{i_1} \dots a_{i_k}. \label{eqn:ckG''2} \\
\text{ s. t. } a_i \geq 1, l \geq 1, \sum_{i=1}^l a_i + l = 2t-1. \label{eqn:constraint1}
\end{align}
The objective function (\ref{eqn:ckG''2}) is the number of $k$-cliques in the (balanced) complete multipartite graph with part $i$ has order $a_i$ and in total $l$ parts. The constraint (\ref{eqn:constraint1}) is from Claim \ref{claim:full}.

Next, we prove the following lemma which gives an upper bound of the maximum missing degree $\max a_i-1$ of $G$.
\begin{lem}\label{lem:smalla}
In the optimal complete multipartite $G$ satisfying the constraint (\ref{eqn:constraint1}) and optimizing (\ref{eqn:ckG''2}), the order of each part $a_i$ satisfies $(a_i-1)^2 < \frac{4n-3k+7-4a_i}{k-1}
$ or $a_i < 3$, where $n$ is the number of vertices in $G$. 
\end{lem}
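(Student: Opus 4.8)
The plan is to exploit optimality of $G$ against a local move that splits one part into two while keeping $|V(G)|+\omega(G)$ — equivalently $\sum_j a_j+l$ — unchanged. By Lemma~\ref{lem:bcm} and Claim~\ref{claim:full} we may take $G$ to be a balanced complete multipartite graph with $l\ge k$ parts of orders $a_1,\dots,a_l$ and $\sum_j a_j+l=2t-1$. If some $a_i<3$ we are in the second alternative of the statement and there is nothing to prove, so fix a part, say part $i$, of order $a:=a_i\ge 3$. Since $G$ is balanced, every other part then has order at least $a-1\ge 2$; I will use this at the very end to get strictness.

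First I would analyze the split move. Replace part $i$ by two parts of orders $b$ and $c$ with $b+c=a-1$ and $b,c\ge 1$ (possible exactly because $a\ge 3$). This drops one vertex and adds one part, so $\lfloor(|V|+\omega)/2\rfloor$ is unchanged and the resulting complete multipartite graph $G'$ is again in $\mathcal{G}_t$. Let $\sigma_m$ be the $m$-th elementary symmetric polynomial in the orders of the parts of $G$ other than part $i$; note $\sigma_{k-2}>0$ since there are $l-1\ge k-1$ such parts. Counting $k$-cliques according to which of the modified parts they meet, $G$ has $\sigma_k+a\,\sigma_{k-1}$ of them while $G'$ has $\sigma_k+(a-1)\sigma_{k-1}+bc\,\sigma_{k-2}$, so the difference is $bc\,\sigma_{k-2}-\sigma_{k-1}$. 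Optimality of $G$ forces $bc\,\sigma_{k-2}\le\sigma_{k-1}$, and choosing $b,c$ with $bc$ as large as possible gives $\lfloor (a-1)^2/4\rfloor\,\sigma_{k-2}\le\sigma_{k-1}$.

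Next I would estimate $\sigma_{k-1}/\sigma_{k-2}$. For positive integers $x_1,\dots,x_N$ one has $m\,e_m=\sum_j x_j\,e_{m-1}(x\setminus x_j)$ and $e_{m-1}(x)=e_{m-1}(x\setminus x_j)+x_j\,e_{m-2}(x\setminus x_j)$, whence $m\,e_m=(\sum_j x_j)\,e_{m-1}-\sum_j x_j^2\,e_{m-2}(x\setminus x_j)$; bounding $\sum_j x_j^2\,e_{m-2}(x\setminus x_j)\ge \sum_j x_j\,e_{m-2}(x\setminus x_j)=(m-1)\,e_{m-1}$ via $x_j\ge 1$ yields $e_m/e_{m-1}\le(\sum_j x_j-m+1)/m$. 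Applying this to the $l-1$ other part-orders (which sum to $n-a$) with $m=k-1$ gives $\sigma_{k-1}/\sigma_{k-2}\le (n-a-k+2)/(k-1)$, and this is \emph{strict} for $k\ge 3$ because those orders are all $\ge 2$, not $1$. Combining with the previous paragraph, $\lfloor(a-1)^2/4\rfloor\le (n-a-k+2)/(k-1)$; using $\lfloor(a-1)^2/4\rfloor\ge((a-1)^2-1)/4$ and clearing denominators gives $(a-1)^2\le(4n-3k+7-4a)/(k-1)$, and the strictness just noted, together with the integrality of $(a-1)^2(k-1)$ and of $4n-3k+7-4a$, upgrades this to the strict inequality in the statement.

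The main obstacle I anticipate is pinning down the exact constant $4n-3k+7-4a_i$ and the strictness simultaneously: this needs care about the parity of $a_i$ (the floor $\lfloor(a_i-1)^2/4\rfloor$ versus $(a_i-1)^2/4$), about exactly which of the two estimates provides the slack, and about the small cases. In particular $k=2$ is a genuine edge case, since there $\sigma_{k-1}/\sigma_{k-2}=n-a$ is an equality and the symmetric-function estimate gives no room; in that case one instead observes that if the split move left the $k$-clique count unchanged then $G'$ would be an unbalanced optimizer in $\mathcal{G}_t$, contradicting Lemma~\ref{lem:bcm}, so the count strictly drops, i.e.\ $bc\,\sigma_{k-2}<\sigma_{k-1}$, which again yields the strict bound. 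The conceptual content — the split move plus the $e_m/e_{m-1}$ estimate — is short; the remaining effort is this bookkeeping together with the trivial edge cases ($k\le 2$, $l=k$).
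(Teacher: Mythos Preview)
Your approach is essentially the same as the paper's. Both proofs use the identical split move (replace a part of size $a\ge 3$ by two parts summing to $a-1$), obtain the identical optimality inequality $bc\,\sigma_{k-2}\le\sigma_{k-1}$, and then bound the ratio $\sigma_{k-1}/\sigma_{k-2}\le (n-a-k+2)/(k-1)$. The only cosmetic difference is in how this ratio bound is derived: the paper phrases it as a double-count of pairs $H_1\subset H_2$ of $(k-2)$- and $(k-1)$-cliques in $G[A_2\cup\cdots\cup A_l]$ (each $(k-1)$-clique contains $k-1$ sub-$(k-2)$-cliques; each $(k-2)$-clique extends in at most $n-a-(k-2)$ ways), while you express the same count via the identity $m\,e_m=(\sum x_j)e_{m-1}-\sum x_j^2 e_{m-2}(x\setminus x_j)$. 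These are the same argument in different clothing.

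You are actually more explicit than the paper about where strictness comes from: your observation that the other parts all have size $\ge a-1\ge 2$ is exactly what makes the ratio bound strict for $k\ge 3$ (in the paper's language, every $(k-2)$-clique occupies strictly more than $k-2$ vertices, so the extension count is strictly below $n-a-(k-2)$). One small caveat on your $k=2$ patch: the claim that $G'$ is necessarily unbalanced fails in the single configuration where $a=3$ and all other parts have size $2$, since then the split produces parts of sizes $1,1,2,\dots,2$; in that one case you cannot invoke Lemma~\ref{lem:bcm} and would instead verify the inequality $(a-1)^2<4n+1-4a$ directly (it reduces to $n\ge 4$, which holds). The paper simply writes the strict inequality without comment, so this is not a divergence in method, just a bookkeeping detail you flagged and the paper did not.
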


\begin{rem}
When $k\ge 25$, by this lemma, we can show $G$ is a dense graph. This means $\mathcal{N}_{k}(\mathcal{G}_t \cap \mathcal{D})=C^*_t(k)=\mathcal{N}_{k}(\mathcal{G}_t )$ when $k\ge 25$.
\end{rem}

\begin{proof}

Suppose $a_1 \geq 3$. Let $G'$ be a graph with parts $B_1, B_2, A_2, \dots, A_l$ where $A_2, \dots, A_l$ are the same as in $G$, and the part $A_1$  in $G$ splits into $B_1, B_2$, where $|B_1| + |B_2| =|A_1| -1 = a_1 - 1$, and $|B_1|, |B_2| \geq 1$. This is possible since $a_1 \geq 3$. Clearly $G'$ also satisfies (\ref{eqn:constraint1}). 
Let $|B_1| = b_1, |B_2| = b_2$. 

Then the objective function for $G'$, i.e., number of cliques of order $k$ in $G'$ can be written as 
\[ \sum_{2 \leq i_1 < \dots < i_k \leq l}  a_{i_1} \dots a_{i_k}
+ 
\sum_{2 \leq i_2 < \dots < i_k \leq l} (b_1 + b_2) a_{i_2} \dots a_{i_k} + 
\sum_{2 \leq i_3< \dots < i_k \leq l} (b_1 b_2) a_{i_3} \dots a_{i_k}\]
where the first term counts the number of cliques of order $k$ not containing a vertex in $B_1 \cup B_2$, while the second term counts the ones containing exactly one vertex from $B_1 \cup B_2$, and the third term counts the ones containing one vertex in $B_1$ and one vertex in $B_2$.

Recall the number of cliques of order $k$ in $G$ is exactly 
\[ \sum_{2 \leq i_1 < \dots < i_k \leq l}  a_{i_1} \dots a_{i_k}
+ 
\sum_{2 \leq i_2 < \dots < i_k \leq l} a_1 a_{i_2} \dots a_{i_k} 
\] where the first term is the number of cliques of order $k$ in $G$ with no vertex in $A_1$, and the second term is the ones with a vertex in $A_1$.

As $G$ has more $K_k$ than $G'$, the number of $K_k$ in  $G$ minus the one in $G'$ satisfies
\begin{align}
& \sum_{2 \leq i_2 < \dots < i_k \leq l} a_1 a_{i_2} \dots a_{i_k}
-\sum_{2 \leq i_2 < \dots < i_k \leq l} (b_1 + b_2) a_{i_2} \dots a_{i_k} - 
\sum_{2 \leq i_3< \dots < i_{k} \leq l} (b_1 b_2) a_{i_3} \dots a_{i_k}  \nonumber \\
=&  \sum_{2 \leq i_2 < \dots < i_k \leq l} (a_1 - b_1 - b_2) a_{i_2} \dots a_{i_k} - \sum_{2 \leq i_3< \dots < i_{k} \leq l} (b_1 b_2) a_{i_3} \dots a_{i_k} \nonumber \\
=& \sum_{2 \leq i_2 < \dots < i_k \leq l}  a_{i_2} \dots a_{i_k} -(b_1 b_2) \sum_{2 \leq i_3< \dots < i_{k} \leq l}  a_{i_3} \dots a_{i_k} \geq 0, \label{eqn:diff}
\end{align}
the last equality holds because $a_1 = b_1+b_2+1$. 
Notice that $ \sum_{2 \leq i_2 < \dots < i_k \leq l}  a_{i_2} \dots a_{i_k} $ is the number of cliques of order $k-1$ in $G[A_2 \cup \dots A_l]$, and $\sum_{2 \leq i_3< \dots < i_{k} \leq l}  a_{i_3} \dots a_{i_k}$ is the number of cliques of order $k-2$ in the same graph. 

Next, we will bound the ratio between these two numbers by a double-counting argument. We will count the number of pairs $(H_1,H_2)$ such that $H_1\subset H_2 \subset G[A_2 \cup \dots A_l]$ and $H_1$ is a $(k-2)$-clique and $H_2$ is a $(k-1)$-clique.
In the graph $G[A_2 \cup \dots A_l]$, each clique of order $k-1$ has $k-1$ cliques of order $k-2$; and each clique of order $k-2$ is in at most $|A_2 \cup \dots \cup A_l| - (k-2) = n-a_1- (k-2) $ cliques of order $k-1$. Thus 
\[
( n- a_1 - (k-2) - a_1) \sum_{2 \leq i_3< \dots < i_{k} \leq l}  a_{i_3} \dots a_{i_k} \geq  (k-1) \sum_{2 \leq i_2 < \dots < i_k \leq l}  a_{i_2} \dots a_{i_k}  . 
\]
Thus inequality in (\ref{eqn:diff}) implies
\[
\sum_{2 \leq i_2 < \dots < i_k \leq l}  a_{i_2} \dots a_{i_k} \geq (b_1 b_2)   \cdot (k-1) \sum_{2 \leq i_2 < \dots < i_k \leq l}  a_{i_2} \dots a_{i_k}  / ( n- (k-2) - a_1). 
\] 
This is equivalent to say
$ n-(k-2)- a_1 > (k-1) (b_1b_2)$,
for any $b_1 + b_2 = a_1 -1$ and $b_1, b_2 >0$. 
By choosing $b_1$ and $b_2$ with difference at most 1, we have $b_1b_2\ge \min \{(\frac{a_1-1}{2})^2, (\frac{a}{2})( \frac{a-2}{2})\}=\frac{a_1^2-2a_1}{4}$.
Thus, we have
$ n-(k-2)- a_1 > (k-1) \frac{(a_1-1)^2-1}{4}.$
Rearranging, we have
$ (a_1-1)^2 < \frac{4n-3k+7-4a_1}{k-1}.$ Proof of this bound for other $a_i$ is same as the proof above for $a_1$.
\end{proof}

 Proofs of the last two parts of Proposition \ref{prop:strucmiddle} are simple computations. We include them in the Appendix A. 

\subsubsection{Proof of Theorem \ref{thm:main}}
We will prove Theorem \ref{thm:main} from Corollary \ref{lem:reduce2}, and we need following lemmas:

\begin{lem}
    Let $h(a,b)=\mathcal{N}_{b}(\mathcal{G}_a \cap \mathcal{D})$ where $a\ge b$. Then, for every non-negative integer $i$, we have $h(a-i,b-i)\le h(a,b)$.
\end{lem}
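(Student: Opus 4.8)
The plan is to prove the statement by induction on $i$, for which it suffices to establish the single step $h(a-1,b-1)\le h(a,b)$ and then iterate, since this gives $h(a-i,b-i)\le h(a-i+1,b-i+1)\le\cdots\le h(a,b)$. The engine of the single step is a simple graph operation: given any $G'\in\mathcal{G}_{a-1}\cap\mathcal{D}$, I would form $G$ by adding to $G'$ one new \emph{universal} vertex $u$, i.e.\ $u$ is adjacent to every vertex of $G'$. I then claim that $G\in\mathcal{G}_a\cap\mathcal{D}$ and that $G$ contains at least as many cliques of order $b$ as $G'$ contains cliques of order $b-1$; taking the maximum over all such $G'$ then yields $h(a,b)\ge h(a-1,b-1)$. (Since every graph in $\mathcal{G}_{a-1}$ has at most $2a-4$ vertices, the relevant maxima are attained, so there is no issue with suprema.)

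To verify the claim I would check three things. First, membership in $\mathcal{G}_a$: adding a universal vertex raises both $|V|$ and $\omega$ by exactly $1$, so
\[
\left\lfloor \tfrac{|V(G)|+\omega(G)}{2}\right\rfloor=\left\lfloor \tfrac{|V(G')|+\omega(G')}{2}\right\rfloor+1\le (a-2)+1=a-1,
\]
hence $G\in\mathcal{G}_a$. Second, membership in $\mathcal{D}$: the new vertex $u$ has missing degree $0$ and every old vertex of $G'$ keeps its missing degree (since $u$ is joined to all of them), so $\BD(G)=\BD(G')$; consequently, if $\BD(G')\le 1$ then $\BD(G)\le 1$, and otherwise $|V(G)|-\omega(G)-2\BD(G)^2-2=|V(G')|-\omega(G')-2\BD(G')^2-2\ge 0$, so in either case $G$ is dense. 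Third, the clique count: every clique of order $b$ in $G$ either avoids $u$, and is then a clique of order $b$ in $G'$, or contains $u$ together with a clique of order $b-1$ in $G'$; hence the number of $b$-cliques of $G$ equals $\mathcal{N}_b(\{G'\})+\mathcal{N}_{b-1}(\{G'\})\ge \mathcal{N}_{b-1}(\{G'\})$, which is what we need.

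I do not expect a genuine obstacle here; the argument is essentially routine once the right operation is chosen. The only points deserving a line of care are the invariance $\BD(G)=\BD(G')$ under adding a universal vertex, which is what keeps the density condition intact, and the degenerate cases: when $\mathcal{G}_{a-1}\cap\mathcal{D}$ contains no graph with a clique of order $b-1$ the inequality reads $h(a-1,b-1)=0\le h(a,b)$ and is trivial, and if repeated application drives $b-i$ down to $0$ both sides are either $0$ or $1$, so the inequality still holds. The advantage of the universal-vertex construction over, say, manipulating an explicit optimizer is that it requires no structural description of the extremal graph and works uniformly in $a$ and $b$.
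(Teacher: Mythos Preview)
Your proposal is correct and takes essentially the same approach as the paper: the paper also adjoins universal vertices to an optimizer $G^*\in\mathcal{G}_{a-i}\cap\mathcal{D}$, checks that $|V|$, $\omega$ each rise while $\BD$ stays fixed so the resulting graph lies in $\mathcal{G}_a\cap\mathcal{D}$, and observes that each $(b-i)$-clique extends uniquely to a $b$-clique. The only cosmetic difference is that the paper adds all $i$ vertices at once rather than inducting one vertex at a time.
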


\begin{proof}
    By definition, $h(a-i, b-i)$ is the maximum number of cliques of order $b-i$ in a dense graph $G$ with $|V(G)| + \omega(G) \leq 2(a-i)-1$. Suppose $G^*$ is the optimizer. Consider $G'$ to be $G^*$ with $i$ extra vertices which form a clique and these $i$ vertices are complete to all the vertices in $G^*$. Then clearly in $G'$,  we have $\omega(G') = \omega(G) +i$, and thus $|V(G')| + \omega(G')  = |V(G)| + i + \omega(G) + i \leq 2(a-i)-1 + 2i = 2a-1$. Moreover, $G'$ is also dense because $|V(G')|-\omega(G')=|V(G)|-\omega(G)$ and $\Delta(G')=\Delta(G)$. Each clique of order $b-i$ in $G^*$ can be extended to a unique clique of order $b$ in $G'$ by extending this clique to the $i$ added vertices. This means the number of cliques of order $b$ in $G'$ is at least the number of cliques of order $b-i$ in $G^*$, which is $h(a-i, b-i)$. On the other hand,  the number of cliques of order $b$ in $G'$ is at most $h(t, k)$. Thus we know $h(t, k) \geq h(t-i, k-i)$. 
\end{proof}

\begin{lem}
    For any $t>k\ge 1$, we have $C^*_t(k-1) \le 4t^2 \cdot C^*_t(k) $. In fact, we will prove $C_t^*(k-1)\le \max\{\lceil \frac{2t-k}{k-1} \rceil^{2}, (2t-2k) \}\cdot C^*_t(k)$.
\end{lem}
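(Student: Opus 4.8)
The plan is to compare $C^*_t(k-1)$ and $C^*_t(k)$ by relating the Tur\'an graphs $T^*_t(k-1)$ and $T^*_t(k)$, using the fact (established in Claim \ref{claim:full} and Lemma \ref{lem:bcm}) that every optimizer is a Tur\'an graph $T(2t-l-1,l)$ for some $k \le l \le t-1$. First I would let $G = T^*_t(k-1) = T(2t-l-1, l)$ be the optimizer for $k-1$, so that $C^*_t(k-1) = \mathcal{N}_{k-1}(\{G\})$. The natural idea is to bound the number of $(k-1)$-cliques of $G$ in terms of its number of $k$-cliques: in any graph $H$ with clique number $\omega(H) = l$, each $(k-1)$-clique extends to a $k$-clique by adding one more vertex from outside, and conversely each $k$-clique contains exactly $k$ cliques of order $k-1$. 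A double-counting of incident pairs $(H_1, H_2)$ with $H_1 \subset H_2$, $|H_1| = k-1$, $|H_2| = k$ gives
\[
k \cdot \mathcal{N}_k(\{G\}) \ge \left( \min_{H_1} (\text{\# extensions of } H_1) \right) \cdot \mathcal{N}_{k-1}(\{G\}).
\]
So the task reduces to lower bounding, for a fixed $(k-1)$-clique $H_1$ in the complete multipartite graph $G$, the number of vertices outside $H_1$ that are adjacent to all of $H_1$; equivalently, the number of parts of $G$ not already met by $H_1$, weighted by their sizes. Since $H_1$ uses $k-1$ of the $l$ parts, there are $l - (k-1)$ unused parts, and each has size at least $\lfloor (2t-l-1)/l \rfloor \ge 1$; this gives the crude bound $\mathcal{N}_{k-1}(\{G\}) \le \frac{k}{l-k+1} \mathcal{N}_k(\{G\})$ when $l \ge k$, but this has a bad denominator when $l = k-1$, which is exactly the degenerate case.

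The degenerate cases need to be handled separately. If $l = k-1$, then $G$ has no $k$-clique at all, so $\mathcal{N}_k(\{G\}) = 0$ and the comparison with $C^*_t(k) \ge \binom{t-1}{k} > 0$ (Proposition \ref{prop:strucmiddle}) must be made directly: here $G = T(2t-k, k-1)$ and I would just bound $\mathcal{N}_{k-1}(\{G\})$, the number of $(k-1)$-cliques, which is the product over the $k-1$ parts of their sizes, each of size $\lceil (2t-k)/(k-1)\rceil$ or $\lfloor (2t-k)/(k-1) \rfloor$; this is at most $\lceil (2t-k)/(k-1) \rceil^{k-1}$. Then I compare this against $C^*_t(k) \ge \binom{t-1}{k}$, or more efficiently against the number of $k$-cliques in a conveniently chosen competitor Tur\'an graph (e.g. $T(2t-k-1, k)$), to extract the claimed factor $\lceil (2t-k)/(k-1)\rceil^2$. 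The case $l = k$ is analogous but milder: then $\mathcal{N}_k(\{G\}) = \prod a_i > 0$, and the number of $(k-1)$-subcliques is $\sum_{j} \prod_{i \ne j} a_i = (\prod a_i)\sum_j 1/a_j \le k \cdot \mathcal{N}_k(\{G\})$, and one checks $\mathcal{N}_k(\{G\}) \le C^*_t(k)$ up to the stated factor — here the factor $(2t - 2k)$ in the refined bound comes from comparing part sizes, since $\sum a_i = 2t - k - 1$ forces the parts to be small.

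For the generic case $l \ge k+1$, the bound $\mathcal{N}_{k-1}(\{G\}) \le \frac{k}{l-k+1}\mathcal{N}_k(\{G\}) \le \mathcal{N}_k(\{G\}) \le C^*_t(k)$ already suffices when $l \ge 2k-1$, and for $k+1 \le l \le 2k-2$ one uses $\frac{k}{l-k+1} \le k$ together with $\mathcal{N}_k(\{G\}) \le C^*_t(k)$ to get $C^*_t(k-1) \le k \cdot C^*_t(k)$; since $k < t$ we have $k \le t \le 4t^2$, covering this range inside the claimed $4t^2$. The main obstacle is the bookkeeping in the degenerate cases $l \in \{k-1, k\}$: one must be careful that the optimizer for $k-1$ could genuinely have $l = k-1$ parts (so that it has no $k$-clique at all), and bounding $\lceil (2t-k)/(k-1)\rceil^{k-1}$ directly against $C^*_t(k)$ requires either the explicit lower bound $C^*_t(k) \ge \binom{t-1}{k}\max(1, 2-4\sqrt{2k/t})^k$ from Proposition \ref{prop:strucmiddle} or a direct construction of a $K_t$-minor-free Tur\'an graph with many $k$-cliques; I would use the latter, picking the number of parts to match $\lceil (2t-k)/(k-1)\rceil$ up to a constant so the ratio telescopes to the stated quadratic factor. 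Everything else is a routine estimate on products of nearly-equal integers summing to $2t - O(k)$.
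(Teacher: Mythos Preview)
Your plan matches the paper's proof: split on whether $T^*_t(k-1)=T(2t-l-1,l)$ has $l=k-1$ or $l\ge k$ parts; for $l\ge k$ use a containment count together with the fact that $T(2t-l-1,l)$ is itself a candidate in the definition of $C^*_t(k)$; for $l=k-1$ compare with the competitor $T(2t-k-1,k)$.

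For the $l=k-1$ case the paper makes your ``conveniently chosen competitor'' concrete: if $T(2t-k,k-1)$ has $a$ parts of size $\lceil\tfrac{2t-k}{k-1}\rceil$ and $b$ parts of size $\lfloor\tfrac{2t-k}{k-1}\rfloor$, delete two of the large parts and add three new singleton parts; the resulting complete $k$-partite graph $H$ has $|V(H)|\le 2t-k-1$ (since $\lceil\tfrac{2t-k}{k-1}\rceil\ge 2$ whenever $k<t$) and $\mathcal N_k(H)=\lceil\tfrac{2t-k}{k-1}\rceil^{a-2}\lfloor\tfrac{2t-k}{k-1}\rfloor^{b}$, so Zykov gives $C^*_t(k)\ge \mathcal N_k(T(2t-k-1,k))\ge \mathcal N_k(H)=C^*_t(k-1)\big/\lceil\tfrac{2t-k}{k-1}\rceil^{2}$. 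This is exactly the ``ratio telescopes to a quadratic factor'' step you allude to, made explicit.

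For $l\ge k$ your argument (every $K_{k-1}$ extends to some $K_k$, and each $K_k$ has exactly $k$ subcliques $K_{k-1}$, hence $\mathcal N_{k-1}(G)\le k\,\mathcal N_k(G)\le k\,C^*_t(k)$) is the clean form of what the paper writes and already delivers the headline $4t^2$ bound. You do not need a separate $l=k$ analysis: it is just the $l\ge k$ case, and there $\mathcal N_k(G)\le C^*_t(k)$ holds on the nose by the definition of $C^*_t(k)$, not merely ``up to the stated factor''.
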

\begin{proof}
    By definition, suppose $T^*_t(k-1)=T(2t-\omega^*-1,\omega^*)$ for some $k-1\le \omega^* \le t-1$. 
    
Suppose $\omega^*=k-1$. We may assume that in $T(2t-k,k-1)$, there are $a$ parts of size $\lceil \frac{2t-k}{k-1}\rceil$ and $b$ parts of size $\lfloor \frac{2t-k}{k-1}\rfloor$ with $a+b = k-1$. Thus the number of $(k-1)$-cliques in $T(2t-k,k-1)$ is at most $\lceil \frac{2t-k}{k-1} \rceil^{a} \lfloor \frac{2t-k}{k-1}\rfloor^b$. 
To bound $C^*_t(k)/C^*_t(k-1)$, we construct a $k$-partite graph $H$  with $a-2$ parts of size $\lceil \frac{2t-k}{k-1}\rceil$, $b$ parts of size $\lfloor \frac{2t-k}{k-1}\rfloor$, and three parts of size $1$. 
Then we have $\omega(H)=(a-2)+b+3=k$.
Because $\lceil \frac{2t-k}{k-1} \rceil\ge 2$, we have $|V(H)|\le |V(T(2t-k,k-1))|-4+3=2t-k-1$. 
By Zykov's theorem \cite{zykov}, Tur\'an graph $T(2t-k-1,k)$ maximized the number of the number of $k$-cliques among all the $K_{k+1}$-free graph with at most $2t-k-1$ vertices. Thus, the number of $K_k$ in $T(2t-k-1,k)$ is at least the number of $K_k$ in $H$, which is $\lceil \frac{2t-k}{k-1} \rceil^{a-2} \lfloor \frac{2t-k}{k-1}\rfloor^b$. This means $C^*_t(k-1) \le \lceil \frac{2t-k}{k-1} \rceil^{2} \cdot C^*_t(k)$.

Suppose $k\le \omega^* \le t-1$, because of the structure of Tur\'an graphs, every $(k-1)$-clique $K'$ in $T(2t-\omega^*-1,\omega^*)$ is contained in a $k$-clique $K$; and every $k$-clique in $T(2t-\omega^*-1,\omega^*)$ contains at most $2t-\omega^*-k \leq 2t-2k$ cliques of order $k-1$. This means there are at most $(2t-2k) C^*_t(k)$ cliques of order $k-1$ in $T(2t-\omega^*-1,\omega^*)$ for any $k\le \omega^* \le t-1$.

The two cases above imply  $C^*_t(k-1) \le \max\{\lceil \frac{2t-k}{k-1} \rceil^{2}, (2t-2k) \}\cdot C^*_t(k)\le 4t^2 \cdot C^*_t(k) $.
\end{proof}

\begin{proof}[Proof of Theorem \ref{thm:main}]
Recall $r_0 = 4 t^{1/2}{\log_2}^{1/4} t \ll k<t$. Assume $t$ is sufficiently large.
By Corollary \ref{lem:reduce2} and  inequality (\ref{ineq:crude choose}),
the maximum number of $k$-cliques in a graph on $n$ vertices without a $K_t$-minor is at most
\begin{align*}
  &n \min(r_0, k) \cdot   \left(
\max_{r \leq \min(r_0, k)}   \binom{\beta t \sqrt{\ln t}}{r-1} \cdot \mathcal{N}_{k-r}(\mathcal{G}_{t-r+1}\cap \mathcal{D} )  \right). \\
\le &  n\cdot r_0 \cdot \binom{\beta t \sqrt{\ln t}}{r_0-1} \cdot \max_{1\le r\le r_0} h(t-r+1,k-r)
\le  n\cdot r_0 \cdot \left( \frac{e \beta t \sqrt{\ln t}}{r_0-1}  \right)^{r_0-1} \cdot h(t,k-1) \\
\le & n\cdot r_0 \cdot t ^{r_0-1} \cdot h(t,k-1) 
\le   n \cdot 2^{(r_0\log_2 t)} \cdot  C_t^*(k-1)
\le   n \cdot 2^{(r_0\log_2 t)} \cdot 4t^2\cdot  C_t^*(k).
\end{align*}

The third inequality is true because $r_0> \sqrt{t}> e\beta \sqrt{\ln t}$ when $t$ is large.
Because $|T^*_t(k)|\le 2t$, the bound above is at most $n\cdot 2^{(r_0\log_2 t)} \cdot 8t^3\cdot  \frac{C_t^*(k)}{|T^*_t(k)|} \le n\cdot 2^{2(r_0\log_2 t)} \cdot  \frac{C_t^*(k)}{|T^*_t(k)|}$.
\end{proof}

\begin{rem}
   When $r_0\le k\ll t$, we can improve the bound in Theorem \ref{thm:main} by approximating the maximum point of $\binom{r_0}{r-1} \left( \frac{\beta  t \sqrt{\ln t}}{r_0}  \right)^{r-1}  C_{t-r+1}^*(k-r)$ among $r\le \min (r_0,k)$. More precisely, for any $r_0\leq k <t^{2/3}$, the number of cliques of order $k$ in every graph on $n$ vertices with no $K_t$-minor is at most 
 \[n r_0 \cdot \left( \frac{\beta  t \sqrt{\ln t}}{r_0}  \right)^{r_0} \binom{t - r_0}{k-r_0} 2^{k - r_0}2^{O(\sqrt{\log t})r_0}.
 \]
 It is not hard to show this bound is better than the bounds in Theorem \ref{thm:main}.
\end{rem}

\subsection{Proof of the Main Theorem \ref{thm: summary}}

In this subsection, we will complete the proof of the Main Theorem \ref{thm: summary}. 

\begin{proof}[Proof of Main Theorem \ref{thm: summary}]
We may assume $t$ is sufficiently large. 
When $k=2$, we recall that Thomason \cite{Th1} proved that the number of edges in graphs on $n$ vertices and with no $K_t$-minor is at most $0.32 t\sqrt{\ln t})n$. By Proposition \ref{prop:strucmiddle}, $\frac{C^*_t(k)}{|V(T^*_t(k))|}\ge \frac{(t(t-1)/2)-1}{2t}> t/5$ as $t> 40$. Thus 
$t\sqrt{\ln t}\le (\frac{t}{5})^{1+o_t(1)}\le \left(\frac{C^*_t(k)}{|V(T^*_t(k))|}\right)^{1+o_t(1)}$ which proves the case when $k=2$. 

    When $k\ge 5t/6$, we have $k>2t/3+2\sqrt{t}{\log_2}^{1/4} t$. Then we can apply Corollary \ref{cor: very large k} to prove this theorem for $k$ in this range.
    When $5t/6> k \ge t^{2/3}$, we have $k\gg t^{1/2}{\log_2}^{5/4} t$ and $t-k\ge t/6 \gg t^{1/2}{\log_2}^{5/4} t$. Then we can apply Corollary \ref{cor:main} to prove this theorem for $k$ in this range.
    
    We will finish the proof by showing this theorem is true when $3\le k< t^{2/3}$. 
    
 By Theorem \ref{thm:crudeub}, the number of  $k$-cliques in a $K_t$-minor free graph with $n$ vertices is at most $\binom{\beta  t\sqrt{\ln t}}{k-1} n$,   
 which is at most 
    $n(\frac{t}{k})^{k-1+o_t(1)}$.  
    Because $k< t^{2/3}$, by Proposition \ref{prop:strucmiddle},  $C^*_t(k)\ge \binom{t}{k}2^k (1-4t^{-1/3})^k \geq \binom{t}{k} 1.82^k$ when $t\ge 45^3$. Then  $\frac{C^*_t(k)}{|V(T^*_t(k))|}\ge \frac{(t/k)^k 1.82^k}{2t}\ge (\frac{t}{k})^{k-1}$ because $1.82^k> 2k$ for every $k\ge 3$.
Therefore, when $3\le k< t^{2/3}$, the number of  $k$-cliques in a $K_t$-minor free graph with $n$ vertices is at most $n\binom{\beta  t\sqrt{\ln t}}{k-1}  \le n(\frac{t}{k})^{(k-1)(1+o_t(1))} \le n\cdot \left(\frac{C^*_t(k)}{|V(T^*_t(k))|}\right)^{1+o_t(1)}$. 
      
\end{proof}

\section{Concluding Remarks}
In this paper we studied the problem ${\text{ex}}(n, K_k, K_t\text{-minor})$ and proved an essentially sharp bound, up to $o_t(1)$ in the exponent, for all $k<t$ such that $t-k\gg \log_2 t$. In other words, we showed ${\text{ex}}(n, K_k, K_t\text{-minor}) = C(k,t)^{1+o(1)}n$ where we have a matching lower bound construction which contain $C(k,t)n$ cliques of size $k$ but with no $K_t$-minor. The exact bound in the conjecture of Wood \ref{conj:wood2} remains open.

An analog question is to study the number of $K_k$ in a graph forbidding $K_t$-subdivision instead of $K_t$-minor is also mentioned in this paper. In the case of forbidding $K_t$-subdivision, we even do not know ${\text{ex}}(n, K_2, K_t\text{-subdivision})$.
\begin{ques}
What are the exact values of ${\text{ex}}(n, K_k, K_t\text{-subdivision})$? 
 \end{ques}

\appendix
\section{Completion of Proposition \ref{prop:strucmiddle} (Proposition \ref{prop:Tstarstructure})} \label{section:stru}

Now we prove Proposition \ref{prop:Tstarstructure}. 
We first give an upper bound on $C_t^*(k)$ which is the number of cliques of order $k$ in $T^*_t(k)$, and also the optimal objective function value for (\ref{eqn:ckG''2}). 
\begin{claim}\label{claim:simpleckbound}
\[ C_t^*(k) \leq \binom{t-1}{k} 2^k. \]
\end{claim}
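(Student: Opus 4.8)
The plan is to combine the trivial upper bound on the number of $k$-cliques in a Tur\'an graph with an elementary factor-by-factor binomial estimate; no hypergraph-container machinery or Maclaurin-type symmetric function inequality is needed here. First I would recall that, by definition, $T_t^*(k)$ is the Tur\'an graph $T(2t-\omega-1,\omega)$ for some $\omega$ with $k\le \omega\le t-1$ (equivalently, by Claim~\ref{claim:full}, a Tur\'an graph whose order $n$ and clique number $\omega$ satisfy $n+\omega=2t-1$ with $\omega\ge k$), so it has $n:=2t-\omega-1$ vertices. Every $k$-clique of a graph is in particular a set of $k$ of its vertices, hence $C_t^*(k)\le \binom{n}{k}$. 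Since $\omega\ge k$ we get $n=2t-\omega-1\le 2t-k-1$, and therefore $C_t^*(k)\le \binom{2t-k-1}{k}$.

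The remaining step is the purely numerical inequality $\binom{2t-k-1}{k}\le \binom{t-1}{k}2^k$. I would prove this by writing both sides as products of $k$ linear factors divided by $k!$: distributing $2^k$ into the falling factorial gives $\binom{t-1}{k}2^k=\frac{1}{k!}\prod_{j=1}^{k}2(t-j)=\frac{1}{k!}\prod_{j=1}^{k}(2t-2j)$, while $\binom{2t-k-1}{k}=\frac{1}{k!}\prod_{j=1}^{k}(2t-k-j)$. For each $j\in\{1,\dots,k\}$ the inequality $2t-k-j\le 2t-2j$ is equivalent to $j\le k$, which holds; and every factor appearing is at least $2t-2k\ge 2>0$, using $k<t$. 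Multiplying these $k$ inequalities of positive reals yields $\binom{2t-k-1}{k}\le\binom{t-1}{k}2^k$, and combining with the previous paragraph gives $C_t^*(k)\le\binom{t-1}{k}2^k$.

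This argument is short, so there is no substantial obstacle; the only point requiring care is that the term-by-term comparison of the two products is valid only because all factors are positive, which is exactly where the hypothesis $k<t$ (giving $2t-2k\ge 2$) enters. As a sanity check, the bound is tight up to the $2^k$ factor at the endpoint $\omega=t-1$ (where $n=t$), consistent with the lower bound $C_t^*(k)\ge\binom{t-1}{k}$ proved elsewhere, so one should not expect to improve the exponential factor in general.
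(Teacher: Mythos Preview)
Your proof is correct and in fact somewhat simpler than the paper's. The paper exploits the multipartite structure explicitly: writing the optimal Tur\'an graph as $x$ parts of size $a$ and $y$ parts of size $a+1$, it bounds the $k$-clique count by $\binom{x+y}{k}(a+1)^k$ via Vandermonde, uses the constraint $(a+1)x+(a+2)y=2t-1$ to get $x+y\le\lfloor(2t-1)/(a+1)\rfloor$, and then argues that the resulting expression $f(a+1)=\binom{\lfloor(2t-1)/(a+1)\rfloor}{k}(a+1)^k$ is maximized at $a=1$. Your route instead discards the multipartite structure entirely, bounding the clique count by the crude $\binom{n}{k}$, and then recovers the loss purely arithmetically via the factor-by-factor inequality $\binom{2t-k-1}{k}\le\binom{t-1}{k}2^k$. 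What your approach buys is brevity and the avoidance of the monotonicity check on $f(a+1)$, which the paper asserts but does not spell out; what the paper's approach buys is a family of intermediate bounds $f(a+1)$ indexed by part size, which is closer in spirit to the later structural analysis of $T_t^*(k)$ in Lemma~\ref{lem:tstar}.
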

\begin{proof}
Suppose the optimal graph, which is a balanced complete multipartite graph by Lemma \ref{lem:bcm}, has $x$ parts of order $a \geq 1$ and $y$ parts of order $a+1$. Thus Claim \ref{claim:full} implies
$
 |V(G)| + \omega(G) = (a+1)x + (a+2)y = 2t-1. $
This implies 
\begin{equation}
x+ y \leq  \lfloor(2t-1)/(a+1) \rfloor. \label{eqn:xaddy}
\end{equation} 
The number of cliques of order $k$ in this graph is 
\begin{equation}
\sum_{i=0}^k \binom{x}{i} \binom{y}{k-i} a^i (a+1)^{k-i}. \label{eqn:ckoriginal}
\end{equation}
We can upper bound the above quantity by
\begin{equation}
\sum_i \binom{x}{i} \binom{y}{k-i} (a+1)^i (a+1)^{k-i} = \binom{x+y}{k} (a+1)^k.  \label{eqn:ckcountbound}
\end{equation}

By (\ref{eqn:xaddy}),  the number of cliques of order $k$ is at most 
\[
\binom{x+y}{k} (a+1)^k \leq \binom{\lfloor \frac{2t-1}{a+1} \rfloor}{k} (a+1)^k 
:= f(a+1).\]
It can be checked that the function $f(a+1)$ is monotone decreasing in $a$. Thus the largest value is chosen when $a = 1$, which is $\binom{t-1}{k} 2^k$. 
\end{proof}

In fact, when $k \ll t$, the above upper bound is essentially correct. We construct a lower bound for $c_k(T_t^*(k))$ which almost matches the upper bound in Claim \ref{claim:simpleckbound}. 
\begin{lem}\label{lem:ckapprox}

 \[ C_t^*(k)\geq \binom{t-1}{k} \max\left(1, \left( 2-4\sqrt{ {2k}/{t}}  \right)\right)^k.\]
This bound can be achieved by considering $T(n, w)$ where $w = \sqrt{kt/2}$ and $n = 2t-1 - w$.

\end{lem}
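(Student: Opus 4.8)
The plan is to verify that one well-chosen Tur\'an graph among the candidates defining $C_t^*(k)$ already realizes the bound. First I dispose of the range where $\max\!\left(1,2-4\sqrt{2k/t}\right)=1$, i.e. $k\ge t/32$: here it suffices to note that $T(t,t-1)=K_t^-$ is of the form $T(2t-\omega-1,\omega)$ with $\omega=t-1\ge k$, and it has $\binom{t-1}{k}+\binom{t-2}{k-1}\ge\binom{t-1}{k}$ cliques of order $k$, so $C_t^*(k)\ge\binom{t-1}{k}$ always. It then remains to treat $k<t/32$ (which forces $t>32$ and $k<t/2$), where I take $w=\lfloor\sqrt{kt/2}\rfloor$ and $n=2t-1-w$ as in the statement, and bound from below the number of $k$-cliques of $G:=T(n,w)$.

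The routine checks on this choice are $k<w\le t-1$ and $n>w$, so that $G=T(2t-w-1,w)$ is a legitimate candidate for $C_t^*(k)$: here $w>k$ because $k<t/16$ gives $\sqrt{kt/2}>\sqrt{8}\,k>k+1$, hence $w=\lfloor\sqrt{kt/2}\rfloor\ge k+1$, while $w\le\sqrt{kt/2}\le t/8\le t-1$ and $n=2t-1-w\ge 15t/8-1>w$. Since a $k$-clique of a complete multipartite graph uses exactly one vertex from each of $k$ distinct parts and every part of $G$ has at least $\lfloor n/w\rfloor$ vertices, the number of $k$-cliques in $G$ is at least $\binom{w}{k}\lfloor n/w\rfloor^k$. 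To compare $\binom{w}{k}$ with $\binom{t-1}{k}$ I use the one-line fact that for $0\le j\le k-1$, $w\ge k$, $t>k$ one has $\frac{w-j}{t-1-j}\ge\frac{w-k}{t}$ (cross-multiplying reduces this to $(w-k)(1+j)\ge t(j-k)$, with nonnegative left side and negative right side), so $\binom{w}{k}=\binom{t-1}{k}\prod_{j=0}^{k-1}\frac{w-j}{t-1-j}\ge\binom{t-1}{k}\left(\frac{w-k}{t}\right)^k$ and the number of $k$-cliques in $G$ is at least $\binom{t-1}{k}\left(\frac{(w-k)\lfloor n/w\rfloor}{t}\right)^k$.

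It then remains to show $\frac{(w-k)\lfloor n/w\rfloor}{t}\ge 2-4\sqrt{2k/t}$. Using $w\ge\sqrt{kt/2}-1$ and $\lfloor n/w\rfloor\ge\frac{2t-1}{w}-2\ge 2\sqrt{2t/k}-3$ (the last step from $w\le\sqrt{kt/2}$ and $\sqrt{kt/2}\ge 1$), expanding the product and discarding positive lower-order terms gives
\[
(w-k)\lfloor n/w\rfloor \;\ge\; \bigl(\sqrt{kt/2}-k-1\bigr)\bigl(2\sqrt{2t/k}-3\bigr) \;\ge\; 2t-\tfrac{7\sqrt{2}}{2}\sqrt{kt}-2\sqrt{2t/k},
\]
since $\sqrt{kt/2}\cdot 2\sqrt{2t/k}=2t$. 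Dividing by $t$ and using $\tfrac{7\sqrt2}{2}<4\sqrt2$, the desired inequality reduces to $2\sqrt{2/(kt)}\le\tfrac{\sqrt2}{2}\sqrt{k/t}$, i.e. $k\ge 4$. The finitely many cases $k\in\{1,2,3\}$ are checked directly from the same construction (there $\prod_{j=0}^{k-1}\frac{w-j}{t-1-j}$ has at most three factors, and a crude estimate already beats $(2-4\sqrt{2k/t})^k\le 8$ with room to spare). Combining the displays yields $C_t^*(k)\ge\binom{t-1}{k}(2-4\sqrt{2k/t})^k$, which together with the first paragraph proves the lemma.

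I expect the only mildly delicate point to be the bookkeeping in the last paragraph: the constant $4$ in $2-4\sqrt{2k/t}$ must be loose enough to absorb both the integer rounding of $w$ and of $n/w$ and the gap between $\binom{w}{k}/\binom{t-1}{k}$ and $(w/t)^k$. The choice $w=\sqrt{kt/2}$ is precisely the value making $\sqrt{kt/2}\cdot 2\sqrt{2t/k}=2t$, which balances $\binom{w}{k}\approx(w/t)^k\binom{t-1}{k}$ against the part-size contribution $(n/w)^k\approx(2t/w)^k$, making the leading term $\binom{t-1}{k}2^k$ and matching the upper bound in Claim~\ref{claim:simpleckbound}.
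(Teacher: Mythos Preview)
Your proposal is correct and follows essentially the same route as the paper: pick the Tur\'an graph $T(2t-1-w,w)$ with $w\approx\sqrt{kt/2}$, lower-bound the number of $k$-cliques by $\binom{w}{k}\lfloor n/w\rfloor^k$, and show the result is at least $\binom{t-1}{k}\bigl(2-4\sqrt{2k/t}\bigr)^k$; the trivial case comes from $K_t^-$. The paper organises the arithmetic slightly differently, writing $\binom{w}{k}(n/w)^k=(2t)^k/k!\cdot[\cdot]^k$ and bounding each bracketed factor below by $1-\tfrac{2w}{t}-\tfrac{k}{w}=1-2\sqrt{2k/t}$ in one stroke, which avoids your separate comparison $\binom{w}{k}\ge\binom{t-1}{k}\bigl(\tfrac{w-k}{t}\bigr)^k$ and thereby sidesteps the small-$k$ case split. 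Your factor-by-factor inequality $\tfrac{w-j}{t-1-j}\ge\tfrac{w-k}{t}$ is a clean way to land directly on $\binom{t-1}{k}$; the paper instead reaches $\binom{t}{k}$ via $(2t)^k/k!$ and then drops to $\binom{t-1}{k}$ at the end.

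One small caution on your $k\in\{1,2,3\}$ sketch: the phrase ``beats $(2-4\sqrt{2k/t})^k\le 8$'' undersells what is needed, since for $k=3$ and large $t$ the target is arbitrarily close to $8$ and your displayed bound $(w-k)\lfloor n/w\rfloor\ge 2t-\tfrac{7\sqrt2}{2}\sqrt{kt}-2\sqrt{2t/k}$ in fact falls short of $t(2-4\sqrt{2k/t})$ once $t$ is moderately large (the $\sqrt t$-coefficient comes out slightly negative). What does work, as your parenthetical hints, is to keep the full product $\prod_{j=0}^{k-1}\tfrac{w-j}{t-1-j}$ (or bound it by $\bigl(\tfrac{w-k+1}{t-k}\bigr)^k$ rather than $\bigl(\tfrac{w-k}{t}\bigr)^k$); that extra unit in numerator and denominator recovers exactly the slack needed. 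This is a one-line fix, not a gap in the approach.
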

\begin{proof}
Notice by considering a clique on $t-1$ vertices, we have $C_t^*(k)\ge \binom{t-1}{k}$. Given $n, w$, each part of $T(n, w)$ has size between $n/w-1$ and $n/w+1$.  Thus the number of cliques of size $k$ in $T(n,w)$ is at least
\begin{equation}
 \binom{w}{k}(n/w -1)^k \geq \binom{w}{k} (n/w)^k \left(   \frac{n/w-1}{n/w} 
\right)^k =  \frac{\prod_{i=0}^{k-1} (w-i)}{k!} (n/w)^k \left(   1- w/n
\right)^k. \label{eqn:coptimalapprox}
\end{equation}
Plugging in  $n = 2t-w-1$, the right hand side of (\ref{eqn:coptimalapprox}) is 
\[ \frac{\prod_{i=0}^{k-1} (w-i)}{k!} ((2t-w-1)/w)^k \left(   1- w/n
\right)^k = (2t)^k  \frac{\prod_{i=0}^{k-1} (w-i)}{k!} \left(\frac{1}{w} - \frac{1}{2t} - \frac{1}{2tw}\right)^k \left(   1- w/n
\right)^k
\]

We know $i \leq k-1$ and $k \leq w < t \leq n$, thus
\begin{align*}
 & (w-i) \left(\frac{1}{w} - \frac{1}{2t} - \frac{1}{2tw}\right) \left(   1- w/n
\right)  
 >   (w-k) \left(\frac{1}{w} - \frac{1}{2t} - \frac{1}{2tw}\right) \left(   1- w/n
\right) \\ 
= & 1 - \frac{w}{2t} - \frac{1}{2t} - \frac{k}{w} + \frac{k}{2t} + \frac{k}{2tw} -  \frac{w}{n} + \frac{w^2}{2nt} + \frac{w}{2tn} + \frac{k}{n} - \frac{kw}{2tn} - \frac{k}{2tn} 
\geq  1- \frac{2w}{t}  - \frac{k}{w} .
\end{align*}
To maximize this lower bound, we choose $w = \sqrt{tk/2}$. Then we have $1- \frac{2w}{t}  - \frac{k}{w} = 1 - 2\sqrt{\frac{2k}{t}}$.
Therefore the right hand side of (\ref{eqn:coptimalapprox}) is at least
$\frac{(2t)^k}{k!} \left(    1- 2\sqrt{2k/t}   \right)^k \geq 2^k \binom{t}{k} \left(    1- 2\sqrt{2k/t}   \right)^k. 
$ 
\end{proof}

We can now prove asymptotically the number of parts in the optimal graph  $T_t^*(k)$ by comparing the number of $k$-cliques in $T(n,w)$ to the upper and lower bounds above. It turns out that the construction in Lemma \ref{lem:ckapprox} is of the correct order. To be more specific, the Tur\'an graph $T(n,w)$ which is $T_t^*(k)$  is such that $w = \Theta(\sqrt{tk})$. 
\begin{lem}[Restatement of the first part in Proposition \ref{prop:Tstarstructure}] \label{lem:tstar}
For any $k \leq t$, the optimal graph $T_t^*(k)$ has $\omega$ parts where $\sqrt{tk}/4\le \omega \le 10 \sqrt{tk}$.
\end{lem}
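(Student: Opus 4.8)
The plan is to analyze $\phi(\omega):=c_k\bigl(T(2t-1-\omega,\omega)\bigr)$ on $k\le\omega\le t-1$; by Lemma~\ref{lem:bcm} and Claim~\ref{claim:full} this is precisely the quantity maximized over $\omega$ in the definition of $T_t^*(k)$, so writing $\omega^*$ for the maximizer we have $T_t^*(k)=T(2t-1-\omega^*,\omega^*)$, $C_t^*(k)=\phi(\omega^*)$, and $\omega^*\ge k$ by definition. We may assume $k\ge 2$ (for $k=1$ the problem is degenerate). If $k\ge 2t/3$ then Proposition~\ref{prop:strucmiddle} (via Lemma~\ref{lem:largekdense}) gives $\omega^*=t-1$, and one checks directly $\sqrt{tk}/4\le t/4\le t-1$ and $t-1\le 2.5t\le 10\sqrt{tk}$. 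So from now on $2\le k<2t/3$, whence $n:=|V(T_t^*(k))|=2t-1-\omega^*\ge t$.

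For the lower bound $\omega^*\ge\sqrt{tk}/4$ I would invoke Lemma~\ref{lem:smalla}, applied to the optimizer $T_t^*(k)$: every part of $T_t^*(k)$ has order $a_i$ with either $a_i<3$ or $(a_i-1)^2<\tfrac{4n-3k+7-4a_i}{k-1}<\tfrac{4n}{k-1}$, and in either case (using $n\ge t>k-1$ to handle $a_i<3$) we get $a_i<2\sqrt{n/(k-1)}+1$. Since the $\omega^*$ parts sum to $n$, this yields $\omega^*>n\big/\bigl(2\sqrt{n/(k-1)}+1\bigr)$, and since the right-hand side is increasing in $n$ and $n\ge t$, $\omega^*>t\big/\bigl(2\sqrt{t/(k-1)}+1\bigr)$. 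The desired bound then reduces, after clearing denominators and dividing by $t$, to the elementary inequality $4\ge 2\sqrt{k/(k-1)}+\sqrt{k/t}$, which holds since $k\ge 2$ gives $2\sqrt{k/(k-1)}\le 2\sqrt2$ and $k<2t/3$ gives $\sqrt{k/t}<\sqrt{2/3}$.

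For the upper bound $\omega^*\le 10\sqrt{tk}$, first note that if $k\ge t/100$ then $\omega^*\le t-1<10\sqrt{tk}$ (as $(t-1)^2<t^2\le 100tk$), so assume $k<t/100$; then $10\sqrt{tk}<t$ and $w:=\lceil\sqrt{tk}\rceil\le t-1$. Since $c_k(T(m,\ell))$ is the $k$-th elementary symmetric function of the part sizes, Maclaurin's inequality gives, for every $\omega$ with $10\sqrt{tk}\le\omega\le t-1$,
\[
\phi(\omega)\ \le\ \binom{\omega}{k}\Bigl(\tfrac{2t-1-\omega}{\omega}\Bigr)^{k}\ \le\ \frac{(2t-1-\omega)^k}{k!}\ \le\ \frac{(2t-1-10\sqrt{tk})^k}{k!}.
\]
On the other hand $T(2t-1-w,w)$ has every part of order at least $\lfloor\tfrac{2t-1-w}{w}\rfloor>\tfrac{2t-1-2w}{w}$, so, using $\sqrt{tk}\le w\le\sqrt{tk}+1$ and $\prod_{i=1}^{k-1}(1-i/w)\ge(1-k/w)^k\ge(1-\sqrt{k/t})^k$,
\[
C_t^*(k)\ \ge\ \phi(w)\ \ge\ \binom{w}{k}\Bigl(\tfrac{2t-1-2w}{w}\Bigr)^{k}\ \ge\ \frac{(2t-3-2\sqrt{tk})^k}{k!}\,\bigl(1-\sqrt{k/t}\bigr)^{k}.
\]
Both right-hand sides are positive in this range, so comparing them and taking $k$-th roots reduces the inequality $\phi(\omega)<C_t^*(k)$ to $2t-1-10\sqrt{tk}<(2t-3-2\sqrt{tk})\bigl(1-\sqrt{k/t}\bigr)$, which simplifies (using $t\sqrt{k/t}=\sqrt{tk}$ and $\sqrt{tk}\,\sqrt{k/t}=k$) to $2<6\sqrt{tk}+2k+3\sqrt{k/t}$ and is clear. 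Hence no $\omega\ge 10\sqrt{tk}$ maximizes $\phi$, i.e.\ $\omega^*<10\sqrt{tk}$.

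I expect the main work here to be bookkeeping rather than conceptual: verifying that the case split ($k\ge 2t/3$; $t/100\le k<2t/3$; $2\le k<t/100$) exhausts the relevant $k$, checking the side conditions in each range (e.g.\ $2t-3-2\sqrt{tk}>0$, $\lfloor(2t-1-w)/w\rfloor\ge 1$, and that Lemma~\ref{lem:smalla} indeed applies since $T_t^*(k)$ is the optimizer of \eqref{eqn:ckG''2}), and carrying the floors, ceilings, and small additive constants through the elementary inequalities without eroding the comfortable slack built into the constants $1/4$ and $10$. The one genuinely delicate point is that the Maclaurin-versus-construction comparison used for the upper bound is only efficient when $\sqrt{tk}\ll t$; for $k=\Theta(t)$ this comparison degrades, but there the trivial bounds $\omega^*\ge k$ and $\omega^*\le t-1$ already suffice, which is exactly why the threshold $t/100$ (and, for the lower bound, the split at $2t/3$) is chosen as above.
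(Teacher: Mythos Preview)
Your proposal is correct and follows essentially the same approach as the paper: both use Lemma~\ref{lem:smalla} to bound the maximum part size and hence obtain $\omega^*\ge\sqrt{tk}/4$, and both obtain $\omega^*\le 10\sqrt{tk}$ by comparing the Maclaurin/AM--GM upper bound $\binom{\omega}{k}(n/\omega)^k$ against the number of $K_k$'s in a Tur\'an graph with $w\approx\sqrt{tk}$ parts. Your version differs only in execution details---you make the case split at $k\approx t/100$ explicit (the paper leaves implicit that $\omega^*\le t-1$ already forces $\omega^*\le 10\sqrt{tk}$ when $k$ is large), you take $w=\lceil\sqrt{tk}\rceil$ rather than invoking Lemma~\ref{lem:ckapprox}, and you use the cruder bound $\binom{\omega}{k}(n/\omega)^k\le n^k/k!$ in place of the paper's more careful estimate of $(w-(k-1)/2)\bigl(\tfrac{1}{w}-\tfrac{1}{2t}-\tfrac{1}{2tw}\bigr)$---but the underlying argument is the same.
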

\begin{proof}
Again assume the optimal graph has $x$ parts of order $a\geq 1$ and $y$ parts of order $a+1$.

Given $n, w$, each part of $T(n, w)$ has size between $n/w-1$ and $n/w+1$.  Thus by the AM-GM inequality, the number of cliques of size $k$ in $T(n,w)$, which is $\binom{n}{k}_w$, satisfies
\begin{equation}
 \binom{n}{k}_w\leq \binom{w}{k} (n/w)^k =  \frac{\prod_{i=0}^{k-1} (w-i)}{k!} (n/w)^k. \label{eqn:coptimalapprox2}
\end{equation}
Plugging in  $n = 2t-w-1$, the right hand side of equation (\ref{eqn:coptimalapprox2}) is
\begin{align}
& \frac{\prod_{i=0}^{k-1} (w-i)}{k!} ((2t-w-1)/w)^k  = (2t)^k  \frac{\prod_{i=0}^{k-1} (w-i)}{k!}  \left(\frac{1}{w} - \frac{1}{2t} - \frac{1}{2tw}\right)^k \\
 \leq & (2t)^k  \frac{(w - (k-1)/2)^k}{k!} \left(\frac{1}{w} - \frac{1}{2t} - \frac{1}{2tw}\right)^k.
\end{align}
The last inequality is by the fact that $(w -i) (w- (k-1-i)) \leq  (w - (k-1)/2)^2$ for all $0 \leq i \leq k-1$. 
We know $i \leq k-1$ and $k \leq w < t \leq n$, thus
\begin{align*}
 & (w-(k-1)/2) \left(\frac{1}{w} - \frac{1}{2t} - \frac{1}{2tw}\right) \\
= & 1 - \frac{w}{2t} - \frac{1}{2t} - \frac{(k-1)/2}{w} + \frac{(k-1)/2}{2t} + \frac{(k-1)/2}{2tw} \\
< & 1 - \frac{w}{2t} - \left( \frac{1}{2t} - \frac{(k-1)/2}{2tw}\right) +  \left(-\frac{(k-1)/2}{w}  + \frac{(k-1)/2}{2w} \right) \  \  \ \ \ \ \  \left( \text{  since } \frac{(k-1)/2}{2t} < \frac{(k-1)/2}{2w}\right) \\
\leq & 1 - \frac{w}{2t} - \frac{(k-1)/2}{2w}  \ \ \  \ \ \ \ \  \left( \text{  since } \frac{(k-1)/2}{w} \leq 1 \right).
\end{align*}
It can be seen that the maximum of the right-hand side is achieved when $w = \sqrt{(k-1)t/2}$. On the other hand, if $w \geq 10\sqrt{kt} > 10 \sqrt{(k-1)t}$ and $k \geq 2$, 
the right-hand side is at most 
\[
1 - \frac{ 10 \sqrt{(k-1)t}}{2t} - \frac{(k-1)/2}{20 \sqrt{(k-1)t}} < 1- 5\sqrt{(k-1)/t} < 1- 2\sqrt{2k/t}.
\]
This means that if $w \geq 10\sqrt{kt} > 10 \sqrt{(k-1)t}$, then the objective function (\ref{eqn:coptimalapprox2}) is at most 
\[\frac{ (2t)^k}{k!} \left( 1- 5\sqrt{(k-1)/t}  \right)^k < \frac{ (2t)^k}{k!} \left( 1- 2\sqrt{2k/t}  \right)^k\]
where the right-hand side is a lower bound for the optimal objective function as has been proved in the Lemma \ref{lem:ckapprox}.
This means in the optimal graph, 
\begin{equation}
 w \leq 10\sqrt{kt}. \label{eqn:wsmall}
 \end{equation}

 On the other hand, by Lemma \ref{lem:smalla}, we know the size of each part $a_i$ in $T_t^*(k)$ satisfies
$(a_i-1)^2 < \frac{4n-3k+7-4a_i}{k-1}
$ or $a_i < 3$. Let $a_1$ be the size of largest part of $T_t^*(k)$, then we have $a_1=\lceil \frac{n}{w} \rceil \ge \frac{n}{w}$.
This means the number of parts $w$ in $T_t^*(k)$ satisfies $n/w \le a_1 <3$ or 
\[ (n/w-1)^2 \leq (a_1-1)^2 \le  \frac{4n-3k+7-4a_1}{k-1} < \frac{4n-3k}{k}.\]
Thus $w> n/3$ or $\frac{n}{w} \le  \sqrt{\frac{4n-3k}{k}}+1$.
Since $t \leq n \leq 2t$ and $k \leq n$, we have $w> t/3$ or $\frac{n}{w} \le  2\sqrt{\frac{4n-3k}{k}}$. When the second case happens, we have 
\[
w\ge \frac{1}{2}\sqrt{\frac{n^2k}{4n-3k}} \ge \frac{1}{2}\sqrt{\frac{n^2k}{4n}} \ge \frac{1}{4}\sqrt{nk} \ge \frac{1}{4}\sqrt{tk}
\]
Thus, we have $w\ge \min \{t/3, \sqrt{tk}/4\}=\sqrt{tk}/4$. Therefore combining with (\ref{eqn:wsmall}), we proved that in the optimal graph $T_t^*(k)$, the number of parts is of the order $\Theta(\sqrt{tk})$. 
\end{proof}

\section{Disproof of Conjecture \ref{conj:wood2}} \label{section: disproof} 
We now give a construction and show Wood's Conjecture \ref{conj:wood2} does not hold for $\lambda \leq 0.553$. 
\begin{thm}\label{thm:woodconj2false}
Let $k = \lambda t$ where $\lambda \leq 0.553$. Then when $t$ is large, there exists a graph on $n$ vertices without $K_t$ as a minor, and the  number of cliques of order $k$ in this graph is strictly larger than
$\binom{t-2}{k-1}n$.
\end{thm}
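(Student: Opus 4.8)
The plan is to use the extremal construction already flagged in the introduction: a disjoint union of copies of the Tur\'an graph $T(\tfrac{4t-4}{3},\tfrac{2t-2}{3})$, i.e.\ of the complement of a perfect matching on $\tfrac{4t-4}{3}$ vertices. Write $m=\tfrac{4t-4}{3}$ and $\omega=\tfrac{2t-2}{3}$ (for $t\equiv 1\pmod 3$; for other residues replace these by the nearest integers with $|V|+\omega\le 2t-1$, which changes everything below only by $O(1)$ multiplicative factors, negligible in the exponent). Set $G_0:=T(m,\omega)$, which has $\omega$ parts of size exactly $2$, clique number $\omega<t$, and $|V(G_0)|+\omega(G_0)=2t-2$, so $G_0\in\mathcal G_t$; since $G_0$ is an induced subgraph of $T(2t-\omega-1,\omega)$, it is $K_t$-minor free by Lemma \ref{lem: kt minor free}, and therefore so is any disjoint union of copies of $G_0$. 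Take $G$ to be $\lfloor n/m\rfloor$ disjoint copies of $G_0$ (then pad with isolated vertices to reach exactly $n$ vertices, which adds no cliques of order $k\ge 2$). A clique of order $k$ in $G_0$ picks one vertex from each of $k$ distinct parts, so $G_0$ has $\binom{\omega}{k}2^k$ of them, and hence $G$ has at least $\big(\tfrac nm-1\big)\binom{\omega}{k}2^k$ cliques of order $k$.

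It therefore suffices to show that for all large $n$ one has $\big(\tfrac nm-1\big)\binom{\omega}{k}2^k>\binom{t-2}{k-1}n$, and since $n$ may be taken arbitrarily large this reduces to the single inequality
\[
\frac{3}{4t-4}\binom{(2t-2)/3}{\lambda t}\,2^{\lambda t}\;>\;\binom{t-2}{\lambda t-1}.
\]
The next step is to take base-$2$ logarithms, divide by $t$, and apply Stirling's estimate $\binom{N}{\alpha N}=2^{NH(\alpha)+O(\log N)}$, where $H(x)=-x\log_2 x-(1-x)\log_2(1-x)$. Using $\tfrac{3\lambda t}{2t-2}\to\tfrac{3\lambda}{2}$ and $\tfrac{\lambda t-1}{t-2}\to\lambda$ together with the continuity of $H$ (and noting the prefactor $\tfrac{3}{4t-4}$ is negligible, while the $2^{\lambda t}$ factor contributes $+\lambda$), the left-hand side has exponential rate $\tfrac23 H(\tfrac{3\lambda}{2})+\lambda$ and the right-hand side has rate $H(\lambda)$. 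Hence the displayed inequality holds for all sufficiently large $t$ provided
\[
\phi(\lambda)\;:=\;\tfrac23\,H\!\Big(\tfrac{3\lambda}{2}\Big)+\lambda-H(\lambda)\;>\;0 .
\]
(This forces $\tfrac{3\lambda}{2}<1$, i.e.\ $\lambda<\tfrac23$, which is exactly the range where $\binom{\omega}{k}\neq0$; and for each fixed $\lambda<\tfrac23$ the gap $\phi(\lambda)\cdot t$ dominates the $O(\log t)$ Stirling error, so "$t$ large" suffices.)

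It remains to verify $\phi(\lambda)>0$ for $0<\lambda\le 0.553$, and I would do this by a short calculus argument. Since $H'(x)=\log_2\tfrac{1-x}{x}$, one computes $\phi'(\lambda)=\log_2\!\big(\tfrac{2-3\lambda}{3(1-\lambda)}\big)+1$, which has a unique zero on $(0,\tfrac23)$ at $\lambda=\tfrac13$, with $\phi'>0$ on $(0,\tfrac13)$ and $\phi'<0$ on $(\tfrac13,\tfrac23)$; thus $\phi$ is unimodal on $(0,\tfrac23)$ with maximum at $\lambda=\tfrac13$. Moreover $\phi(0^+)=0$ with $\phi(\lambda)/\lambda\to 1-\log_2\tfrac32>0$, so $\phi>0$ on $(0,\tfrac13]$; and since $\phi$ is monotone decreasing on $[\tfrac13,\tfrac23)$ it suffices to check the single numerical inequality $\phi(0.553)>0$, which holds (one finds $\phi(0.553)\approx 3\times10^{-4}$). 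The main obstacle is precisely this last verification: the constant $0.553$ is genuinely close to the true threshold (the zero of $\phi$ in $(\tfrac13,\tfrac23)$, a transcendental number just above $0.553$), so the inequality at $\lambda=0.553$ is tight and must be checked with adequate numerical care, and one must make sure the unimodality argument really confines the minimum of $\phi$ on $(0,0.553]$ to the endpoints rather than to an interior point.
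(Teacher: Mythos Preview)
Your proposal is correct and follows essentially the same approach as the paper: both use disjoint copies of the complement of a perfect matching on $\tfrac{4(t-1)}{3}$ vertices, reduce via Stirling to the positivity of the same function (your $\phi(\lambda)=\tfrac23 H(\tfrac{3\lambda}{2})+\lambda-H(\lambda)$ equals the paper's $f(\lambda)=\lambda+h(2/3)-h(2/3-\lambda)+h(1-\lambda)$ after unwinding the entropy), and then check this function is positive on $(0,0.553]$. Your unimodality argument via $\phi'(\lambda)=\log_2\!\big(\tfrac{2-3\lambda}{3(1-\lambda)}\big)+1$ with its unique zero at $\lambda=\tfrac13$ is a slightly cleaner justification than the paper's bare assertion that $f$ is positive on the interval.
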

\begin{proof}
Consider a graph $G$ on $n$ vertices which is a union of the complement of a perfect matching on $2(t-1)/3$ edges. We can assume $t \equiv 1 \mod 3$ and $n$ is divisible by $4(t-1)/3$. Thus by Lemma \ref{lem:densehnumber}, the largest clique minor order in $G$ is $t-1$. On the other hand, the number of cliques of order $k$ in $G$ is 
\[
\binom{2(t-1)/3}{k}2^k \cdot \frac{n}{4(t-1)/3}.\]
The last term $\frac{n}{4(t-1)/3}$ is the number of copies of the graph which is the complement of a perfect matching. Each copy has exactly $\binom{2(t-1)/3}{k}2^k$ cliques of order $k$; this is because each edge in the matching can contribute to at most one vertex in the clique.

We want to show that when $t$ is large, 
\begin{equation} \binom{2(t-1)/3}{k}2^k \cdot \frac{n}{4(t-1)/3} > \binom{t-2}{k-1}n.  \label{eqn:goal}
\end{equation}
Assume $k = \lambda t$ where $1/3 < \lambda < 0.553$. Then by Stirling's formula applied to the binomial coefficient, letting $h(x) = x \log_2 x$, the left-hand side of (\ref{eqn:goal}) is at least  
\[ n c_1 \sqrt{  \frac{2t/3}{k(2t/3-k)}    }2^{h(2(t-1)/3) - h(k) - h(2(t-1)/3 -k) + k},\]
where $c_1$ is some absolute constant. 
Similarly, the right hand side of (\ref{eqn:goal})  is at most 
\[ n c_2 \sqrt{  \frac{t}{k(t-k)}    }2^{h(t-2) - h(k-1) - h(t-k-1)},\] where again $c_2$ is some absolute constant.
It suffices to show that for each $\lambda$, there is some constant $\epsilon$ such that 
\begin{equation}
 h(2(t-1)/3) - h(k) - h(2(t-1)/3 -k) + k > \epsilon t + h(t-2) - h(k-1) - h(t-k-1). \label{eqn:goal2}
 \end{equation}
If this is the case, then to prove (\ref{eqn:goal}), it suffices to show
\[ n c_1 \sqrt{  \frac{2t/3}{k(2t/3-k)}    } 2^{\epsilon t} > n c_2 \sqrt{  \frac{t}{k(t-k)}    }.\] 
This clearly holds when $k = \lambda t$ where $\lambda$ is fixed and $t$ sufficiently large. Thus it suffices to prove (\ref{eqn:goal2}) for some $\epsilon >0$. 

As $h'(x) = \log x + 1/\ln(2)$,  for $a > b>1$, $0 \leq h(a) - h(b) \leq (b-a ) (\log a +1/\ln(2))$. If $b-a \ll a$, we will have  when $a$ sufficiently large, 
$ h(b) = h(a) + O(\log a)$.
Thus to prove (\ref{eqn:goal2}) for some $\epsilon>0$, it suffices to prove there exists a constant $\epsilon' >0$ such that when $t$ is sufficiently large, 
\[ h(2t/3) - h(k) - h(2t/3 -k) + k > \epsilon' t + h(t) - h(k) - h(t-k).\]
Removing $h(k)$ from both sides, 
it suffices to prove $ h(2t/3) - h(2t/3 -k) + k > \epsilon' t + h(t) - h(t-k)$.
Using $k = \lambda t$, notice 
\begin{align*} & h(t) - h(t-k) = t \log t - (t-k) \log(t-k) 
=  t \log t - (t-k) \log(t (1-k/t)) \\
 = & t\log t - (t-k) \log t - (t-k) \log(1-\lambda) = \lambda t \log t - (1-\lambda) t \log(1-\lambda).
\end{align*}

Similarly, for the left hand side,
\begin{align*} & h(2t/3) - h(2t/3-k)=
 \lambda t \log (2 t/3) - (2/3-\lambda) t \log(1-3\lambda/2) \\
  =& \lambda t \log (t) + \lambda t \log(2/3)  - (2/3-\lambda)t \log(3/2) - (2/3 - \lambda)t \log(2/3 - \lambda) \\
  = &  \lambda t \log (t) + 2t/3\log(2/3) - (2/3 - \lambda)t \log(2/3 - \lambda)
\end{align*}
Therefore we want to prove 
\[ 
- (1-\lambda) t \log(1-\lambda) + \lambda t \log t+  \epsilon' t < k+ \lambda t \log t +2t/3\log(2/3) - (2/3 - \lambda)t \log(2/3 - \lambda).\]
Removing $ \lambda t \log t$ from both ends, and dividing both sides by $t$, it is equivalent to show
$  \epsilon'- h(1-\lambda)   < \lambda + h(2/3) - h(2/3 - \lambda)$.
The function $f(\lambda) = \lambda + h(2/3) - h(2/3 - \lambda) + h(1-\lambda)$
is strictly positive for $\lambda \leq 0.553$, which means the existence of positive $\epsilon'$. 
\end{proof}

\end{document}